\newtheorem{assumption}{Assumption}
\newtheorem{remark}[theorem]{Remark}
\newcommand{\V}{\mathbb{V}}
\newcommand{\C}{\mathbb{C}}
\newcommand{\pa}{\partial}
\newcommand{\eps}{\varepsilon}
\newcommand{\Om}{\Omega}
\newcommand{\IOm}{I\times \Om}
\newcommand{\norm}[1]{\lVert#1\rVert}
\newcommand{\ltwonorm}[1]{\lVert#1\rVert_{L^2(\Omega)}}
\newcommand{\abs}[1]{\lvert#1\rvert}
\newcommand{\Ppol}[1]{\mathcal{P}_{#1}}
\newcommand{\R}{\mathbb{R}}
\newcommand{\lk}{\ln{\frac{T}{k}}}
\newcommand{\Xk}{X^0_k}
\newcommand{\Xkh}{X^{0,r}_{k,h}}
\newcommand{\Cell}{\alpha}
\definecolor{darkred}{rgb}{.7,0,0}
\definecolor{green}{rgb}{0,0.7,0}
\begin{document}

\title{Discrete maximal parabolic regularity  for Galerkin finite element methods for non-autonomous parabolic problems}

\author{
Dmitriy Leykekhman\footnotemark[2]
\and
Boris Vexler\footnotemark[3]
}

\pagestyle{myheadings}
\markboth{DMITRIY LEYKEKHMAN AND BORIS VEXLER}{Discrete maximal parabolic regularity}

\maketitle

\renewcommand{\thefootnote}{\fnsymbol{footnote}}
\footnotetext[2]{Department of Mathematics,
               University of Connecticut,
              Storrs,
              CT~06269, USA (dmitriy.leykekhman@uconn.edu). The author was partially supported by NSF grant DMS-1115288. }

\footnotetext[3]{Technical University of Munich,
Chair of Optimal Control,
Center for Mathematical Sciences,
Boltzmannstra{\ss}e 3,
85748 Garching by Munich, Germany, (vexler@ma.tum.de). }

\renewcommand{\thefootnote}{\arabic{footnote}}


\begin{abstract}
The main goal of the paper is to establish time semidiscrete  and 
space-time fully discrete maximal parabolic regularity for the lowest 
order time discontinuous Galerkin solution  of linear parabolic 
equations with time-dependent coefficients. Such estimates have many 
applications. As one of the applications  we establish  best 
approximations type results with respect to the $L^p(0,T;L^2(\Omega))$ 
norm for $1\le p\le \infty$.
\end{abstract}

\begin{keywords}
parabolic problems, maximal parabolic regularity, discrete maximal parabolic regularity, finite elements, discontinuous Galerkin methods, optimal error estimates, time-dependent coefficients, non-autonomous problems
\end{keywords}

\begin{AMS}
65N30,65N15
\end{AMS}

\section{Introduction}
Let $\Om$ be a Lipschitz domain in $\mathbb{R}^d$, $d\geq 1$ and $I=(0,T)$.
We consider the following second order parabolic partial differential equation with time-dependent coefficients,
\begin{equation}\label{eq: heat equation}
\begin{aligned}
\pa_tu(t,x)+A(t,x) u(t,x) &= f(t,x), & (t,x) &\in \IOm,\;  \\
    u(t,x) &= 0,    & (t,x) &\in I\times\pa\Omega, \\
   u(0,x) &= u_0(x),    & x &\in \Omega,
\end{aligned}
\end{equation}
with the right-hand side $f \in L^p(I;L^2(\Omega))$ for some $1\le p\le \infty$ and $u_0\in L^2(\Om)$,
where the time-dependent elliptic operator is given by the formal expression
\begin{equation}\label{eq: elliptic}
A(t,x)u(t,x)=-\sum_{i,j=1}^d\partial_j(a_{ij}(t,x)\partial_i u(t,x))
\end{equation}
with $a_{ij}(t,x)\in L^\infty(I\times \Om)$ for $i,j=1,\dots,d$ satisfying  $a_{ij}=a_{ji}$ and the uniform ellipticity property
\begin{equation}\label{eq: uniform ellipticity}
\sum_{i,j=1}^da_{ij}(t,x)\xi_i\xi_j\geq \alpha|\xi|^2, \quad \forall \xi\in \mathbb{R}^d \quad \text{and a.e.}\ (t,x)\in I\times \Om,
\end{equation}
for some constant $\alpha>0$. We also assume that the coefficients $a_{ij}(t,x)$ are continuous in $t$ for almost all $x \in \Omega$ and that the following condition holds:
\begin{equation}\label{eq: condition on time dependent coefficients}
    |a_{ij}(t_1,x)-a_{ij}(t_2,x)|\le \omega(|t_1-t_2|),\quad 1\le i,j\le d
\end{equation}
for all $t_1, t_2 \in \bar I$  and almost all $x\in \Om$, where $\omega \colon [0,T]\to [0,\infty)$ is a nondecreasing function such that
\begin{equation}\label{eq: condition on omega}
\frac{w(t)}{t^{\frac{3}{2}}} \text{ is non-increasing on } (0,T] \text{ and}\quad  \int_0^T\frac{\omega(t)}{t^{\frac{3}{2}}}dt<\infty,
\end{equation}
see \cite{HaakBH_OuhabazEM_2015} for a similar assumptions. This assumption is fulfilled, for example, if $a_{ij}$ is H{\"o}lder continuous with exponent $\frac{1}{2}+\eps$ in time and uniformly continuous in space. 

The maximal parabolic regularity for $u_0\equiv 0$ says that there exists a constant $C$ such that for $f\in L^p(I;L^s(\Omega))$,
\begin{equation}\label{eq: continuous maximal parabolic}
\norm{\pa_t u}_{L^p(I;L^s(\Omega))} + \norm{A(\cdot) u }_{L^p(I;L^s(\Omega))} \le C \, \norm{f}_{L^p(I;L^s(\Omega))}, \quad 1<s,p<\infty.
\end{equation}
For time-independent coefficients the above result is well understood  
\cite{CannarsaP_VespriV_1986, CoulhonT_DuongXT_2000, DenkR_HieberM_PrussJ_2003, Haller-DintelmannR_RehbergJ_2009, HieberM_PrussJ_1997}, however for time-dependent coefficients it is still an active area of research \cite{AuscherP_EgertM_2016, DierD_2015, DisserK_ElstAFM_RehbergJ_2017, HaakBH_OuhabazEM_2015}.
The maximal parabolic regularity is an important analytical tool and has a number of applications, especially to nonlinear problems and optimal control problems when sharp regularity results are required (cf., e.g.,~\cite{HombergD_MeyerC_RehbergJ_RingW_2009, KrumbiegelK_RehbergJ_2013a,KunischK_PieperK_VexlerB_2014, LeykekhmanD_VexlerB_2013, LeykekhmanD_VexlerB_2016c}).

The main goal of this paper is to establish similar maximal parabolic regularity results for time semidiscrete discontinuous Galerkin solutions as well as for fully discrete Galerkin approximations. Such results are very useful, for example, in a  priori error estimates  and essential in obtaining error estimates where the spatial mesh size $h$ and the time steps $k$ are independent of each other (cf.~\cite{LeykekhmanD_VexlerB_2017a, LeykekhmanD_VexlerB_2016a}).

Previously in \cite{LeykekhmanD_VexlerB_2016b} we established the corresponding discrete maximal parabolic regularity for discontinuous Galerkin time schemes of arbitrary order for autonomous problems. The extension to non-autonomous problems is not straightforward, especially for the critical values of $s,p=1$ or $s,p=\infty$. 
In this paper, we investigate the maximal parabolic regularity for $s=2$ and arbitrary $1\le p\le \infty$ for the lowest order time discontinuous Galerkin (dG$(0)$) methods, which can be considered as modified backward Euler (BE) method. 
The main difference between dG($0$) and BE methods  lies in the way  the time-dependent coefficients and the right-hand side are approximated. In dG($0$) formulation they are approximated by averages over each subinterval $I_m$ (see the details below) while in BE methods they are evaluated at time nodes.
 As a result, dG($0$)  approximations  are weakly consistent, i.e. satisfy the Galerkin orthogonality relation, see Section \ref{sec: preliminaries} for details.  

Parabolic problems with time-dependent coefficients are important, for example for analyzing quasi-linear problems. Over the years, there have been a considerable number of publications devoted to various numerical methods for   problems with time-dependent coefficients \cite{ChrysafinosK_WalkingtonNJ_2006a, DobrowolskiM_1978, GonzalezC_PalenciaC_1998, HippD_HochbruckM_OstermannA_2014, HuangMY_ThomeeV_1981, KarakashianO_1986, KeelingS_1989, LuskinM_RannacherR_1982, LuskinM_RannacherR_1982b, SammonP_1983, SammonP_1978}. The publication~\cite{ChrysafinosK_WalkingtonNJ_2006a} is the most relevant to our presentation since it treats discontinuous Galerkin methods. However, none of the above publications addresses the question of the discrete maximal parabolic regularity and the techniques used in those papers are not immediately applicable for establishing such results even for $p=2$.

Time discrete maximal parabolic regularity (sometimes called well-posedness or coercivity property in the literature) have been investigated in a number of publications for various time schemes, \cite{AshyralyevA_PiskarevS_WeisL_2002, AshyralyevA_SobolevskiiPE_1994, GuidettiD_2007, GuidettiD_PiskarevS_1998, GuidettiD_PiskarevS_1999, KovacsB_LiBLubichC_2016, LeykekhmanD_VexlerB_2016b}. However, all the above mentioned publications are dealing with the autonomous case. The extension to non-autonomous situation is not easy. We are only aware of the publication \cite{LiB_SunW_2017a}, where the discrete maximal parabolic regularity is established for  problems with time-dependent coefficients for the backward Euler  method. Although the results in \cite{LiB_SunW_2017a} are similar in nature, there are some significant differences. The results in \cite{LiB_SunW_2017a} require $a_{ij}(t,x)\in W^1_\infty(I\times \Om)$, smoothness of $\Om$ and treat only uniform time steps, but they are valid in $L^s(\Om)$ norms for $1<s<\infty$. Our results, on the other hand,  require only a H\"{o}lder continuity of $a_{ij}(t,x)$ in $t$ and $L^\infty$ in space, allow $\Om$ to be merely Lipschitz and treat variable time steps, but are valid only in $L^2(\Om)$ norm in space. Moreover,  the discrete maximal parabolic regularity in  \cite{LiB_SunW_2017a} is shown in $l^p(I;L^s(\Om))$  norm for $1<p,s<\infty$ and since their proof requires Gr\"{o}nwall's inequality, the argument can not be naturally extended to the critical values of $p=1$ and $p=\infty$ even with the expense of the logarithmic term.  We  establish our result by a completely different argument, including fully discrete Galerkin approximations, in $L^p(I;L^2(\Om))$ norm for any $1\le p\le \infty$. For our future applications the inclusion of the critical values of $p=1$ and $p=\infty$ is essential for error estimates in $L^\infty(I;L^2(\Om))$  norm. We also want to mention that we went through some technical obstacles in order to incorporate variable times steps. In the case of uniform time steps many arguments can be significantly simplified.

Our presentation is inspired by \cite{HaakBH_OuhabazEM_2015}, where the maximal parabolic regularity was established for continuous problems for $s=2$ and $1<p<\infty$ with rather weak assumptions on $A$.
 Thus, in particular, we show for dG($0$) method the semidiscrete solution $u_k$ on any time level $m$ for $u_0=0$  and $f\in L^\infty(I; L^2(\Om))$ satisfies
\begin{equation}\label{eq: maximal parabolic dgr intro combined}
\begin{aligned}
\|A_{k,m} u_{k,m}\|_{L^\infty(I_m;L^2(\Om))}+\left\|\frac{[u_{k}]_{m-1}}{k_m} \right\|_{L^2(\Om)}\le C\lk\|f\|_{L^\infty(I;L^2(\Om))}. 
\end{aligned}
\end{equation}
For $p=1$ with $u_0\in L^2(\Om)$  and $f\in L^1(I; L^2(\Om))$  we also obtain
\begin{equation}\label{eq: maximal parabolic dgr L1}
\sum_{m}\left(\|A_{k,m} u_{k,m}\|_{L^1(I_m;L^2(\Om))}+\|[u_{k}]_{m-1}\|_{L^2(\Om)}\right)\le C\lk\left(\|f\|_{L^1(I;L^2(\Om))}+\|u_0\|_{L^2(\Om)}\right),
\end{equation}
where $k_m$ is the time step on subinterval $I_m$ and $A_{k,m}$ is the average of $A(t)$ on $I_m$ (see section 2 for a detailed description).
In contrast to the continuous estimate \eqref{eq: continuous maximal parabolic}, the above estimates include  the limiting cases of $p=\infty$ and $p=1$, which explains the logarithmic factor in~\eqref{eq: maximal parabolic dgr intro combined} and \eqref{eq: maximal parabolic dgr L1}.

The corresponding results also hold for the fully discrete approximation $u_{kh}$. Thus in particular for  $1\le p\le \infty$ and $u_0=0$, we establish
\begin{equation}\label{eq: maximal parabolic dgr intro combined Lp fully}
\left[\sum_{m}\left(\|A_{kh,m} u_{kh,m}\|^p_{L^p(I_m;L^2(\Om))}+k_m\left\|\frac{[u_{kh}]_{m-1}}{k_m} \right\|^p_{L^2(\Om)}\right)\right]^{\frac{1}{p}}\le C\lk\|f\|_{L^p(I;L^2(\Om))}, 
\end{equation}
with corresponding changes for $p=\infty$.  We would like to point out that the above fully discrete result is valid on rather general meshes and does not require the mesh to be quasi-uniform or even shape regular, only admissible (no hanging nodes). 

As an application of the discrete maximal parabolic regularity we show that if the coefficients $a_{ij}(t,x)$ are sufficiently regular (see Assumption \ref{assumption})  and $\Om$ convex  we  obtain symmetric error estimate 
$$
\|u-u_k\|_{L^p(I;L^2(\Om))}\le C\lk \|u-\pi_k u\|_{L^p(I;L^2(\Om))}, \quad 1\le p<\infty,
$$
where $\pi_k$ is an interpolation into the space of piecewise constant in time functions defined in \eqref{eq: projection pi_k}.
For $p=\infty$ we can establish even the best approximation type result 
$$
\|u-u_k\|_{L^\infty(I;L^2(\Om))}\le C\lk \|u-\chi\|_{L^\infty(I;L^2(\Om))},
$$
for any $\chi$ in the subspace of piecewise constant in time functions, see Theorem \ref{thm: best approxim semidiscrete infty}. The corresponding fully discrete versions are
$$
\|u-u_{kh}\|_{L^p(I;L^2(\Om))}\le C\lk \left(\|u-\pi_k u\|_{L^p(I;L^2(\Om))}+\|u-R_{h}u\|_{L^p(I;L^2(\Om))}\right)
$$
and
$$
\|u-u_{kh}\|_{L^\infty(I;L^2(\Om))}\le C\lk \left(\|u- \chi\|_{L^\infty(I;L^2(\Om))}+\|u-R_{h}u\|_{L^\infty(I;L^2(\Om))}\right),
$$
for any $\chi$ in the subspace and $R_{h}(t)$ being the Ritz projection corresponding to $A(t)$. The rate of convergence  depends
of course on the regularity of $u$. 

The rest of the paper is organized as follows. In section \ref{sec: preliminaries} we introduce the discontinuous Galerkin method and some notation. Section \ref{sec: time discretization}, which is the central piece of the paper, consists of several parts. First we write the dG($0$)  approximate solution $u_k$ in a convenient form. Then we introduce a transform function $w_k$ that satisfies a similar equation, but with transform operators. Then in a  series of lemmas we show that the resulting  operators are bounded in certain norms.  
Finally in sections \ref{subsec: discrete maximum time} and \ref{sec: fully discrete} we establish semidiscrete and fully discrete maximal parabolic regularity in $L^p(I; L^2(\Om))$ norms, respectively. We conclude our paper with section \ref{sec: applications}, where we show how the above 
 discrete maximal parabolic regularity results can be used to establish symmetric and best approximation type error estimates for the problems on convex domains with coefficients  satisfying some additional assumptions. 


\section{Preliminaries} \label{sec: preliminaries}
First, we introduce the bilinear form $a \colon \R \times H^1_0(\Omega) \times H^1_0(\Omega)\to \R$ defined by
\begin{equation}\label{eq: bilinear form}
    a(t;u,v)=\int_\Om \sum_{i,j=1}^da_{ij}(t,x)\partial_i u(x)\partial_j v(x)dx.
\end{equation}
From $\{a_{ij}(t,x)\}_{i,j=1}^d\subset L^\infty(I\times \Om)$ one can see that for each $t\in I$ the bilinear form $a(t;\cdot,\cdot)$ is bounded
\begin{equation}\label{eq: bounded form}
    a(t;u,v)\le C\|u\|_{H^1(\Om)}\|v\|_{H^1(\Om)},
\end{equation}
from the uniform ellipticity assumption \eqref{eq: uniform ellipticity}, it is coercive
\begin{equation}\label{eq: coercivity form}
    a(t;u,u)\geq \Cell\|u\|^2_{H^1(\Om)},
\end{equation}
and from \eqref{eq: condition on time dependent coefficients} it follows that
\begin{equation}\label{eq: bounded form with omega}
    |a(t_1;u,v)-a(t_2;u,v)|\le C\omega(|t_1-t_2|)\|u\|_{H^1(\Om)}\|v\|_{H^1(\Om)}.
\end{equation}
In view of the homogeneous Dirichlet boundary conditions the $H^1$ norm is equivalent to the $H^1$ seminorm. For each $t \in \bar{I}$ this bilinear form defines an operator $A(t) \colon H^1_0(\Omega)\to H^{-1}(\Omega)$ by
\[
\langle A(t) u, v \rangle = a(t,u,v) \quad \text{for all}\quad u,v\in H^1_0(\Omega),
\]
where $\langle\cdot, \cdot \rangle$ is the duality pairing between $H^1_0(\Om)$ and $H^{-1}(\Om)$ spaces. 

To introduce the time discontinuous Galerkin discretization for the problem,
 we partition  $I =(0,T]$ into subintervals $I_m = (t_{m-1}, t_m]$ of length $k_m = t_m-t_{m-1}$, where $0 = t_0 < t_1 <\cdots < t_{M-1} < t_M =T$. The maximal and minimal time steps are denoted by $k =\max_{m} k_m$ and $k_{\min}=\min_{m} k_m$, respectively.
We impose the following conditions on the time mesh.
\begin{enumerate}[(i)]
  \item There are constants $c,\beta>0$ independent on $k$ such that
    \[
      k_{\min}\ge ck^\beta.
    \]
  \item There is a constant $\kappa>0$ independent on $k$ such that for all $m=1,2,\dots,M-1$
    \[
    \kappa^{-1}\le\frac{k_m}{k_{m+1}}\le \kappa.
    \]
  \item It holds $k\le\frac{1}{4}T$. 
  \end{enumerate}
  Similar assumptions are made, e.g., in~\cite{MeidnerD_RannacherR_VexlerB_2011}.
The semidiscrete space $X_k^0$ of piecewise constant functions in time is defined by
\[
X_k^0=\{u_{k}\in L^2(I;H^1_0(\Om)) :\ u_{k}|_{I_m}\in \Ppol{0}(I_m;H^1_0(\Om)), \ m=1,2,\dots,M \},
\]
where $\Ppol{0}(V)$ is the space of constant functions in time with values in a Banach space $V$.
We will employ the notation
\[
v^+_m:=\lim_{t\to 0^+} v(t_m+t), \quad v^+_-:=\lim_{t\to 0^+} v(t_m-t), \quad \text{and}\quad [v]_m = v^+_m - v^-_m,
\]
if these limits exist. For a function $v_k$ from $X_k^0$ we denote $v_{k,m}:=v_k|_{I_m}$ resulting in
\[
v_{k,m}^+=v_{k,m+1},\quad v_{k,m}^-=v_{k,m}, \quad\text{and}\quad [v_k]_m=v_{k,m+1}-v_{k,m},
\]
for $m=1,2,\dots,M-1$.

Next we define the following bilinear form
\begin{equation}\label{eq: bilinear form B}
 B(u,\varphi)=\sum_{m=1}^M \langle \partial_t u,\varphi \rangle_{I_m \times \Omega} + \sum_{i,j=1}^d(a_{ij}\partial_i u,\partial_j \varphi)_{\IOm}+\sum_{m=2}^M([u]_{m-1},\varphi_{m-1}^+)_\Om+(u_{0}^+,\varphi_{0}^+)_\Om,
\end{equation}
where $(\cdot,\cdot)_{\Omega}$ and $(\cdot,\cdot)_{I_m \times \Omega}$ are the usual $L^2$ space and space-time inner-products,
$\langle \cdot,\cdot \rangle_{I_m \times \Omega}$ is the duality product between $ L^2(I_m;H^{-1}(\Omega))$ and $ L^2(I_m;H^{1}_0(\Omega))$. 
Rearranging the terms in \eqref{eq: bilinear form B}, we obtain an equivalent (dual) expression for $B$,
\begin{equation}\label{eq:B_dual}
 B(u,\varphi)= - \sum_{m=1}^M \langle u,\partial_t \varphi \rangle_{I_m \times \Omega} +  \sum_{i,j=1}^d(a_{ij}\partial_i u,\partial_j \varphi)_{\IOm}-\sum_{m=1}^{M-1} (u_m^-,[\varphi]_m)_\Om + (u_M^-,\varphi_M^-)_\Om.
\end{equation}
We note, that for $u_k,\varphi_k \in \Xk$ the bilinear form~\eqref{eq: bilinear form B} simplifies to
\[
B(u_k,\varphi_k)=\sum_{i,j=1}^d(a_{ij}\partial_i u_k,\partial_j \varphi_k)_{\IOm}+\sum_{m=2}^M([u_k]_{m-1},\varphi_{k,m})_\Om+(u_{k,1},\varphi_{k,1})_\Om
\] 
and
$$
 B(u_k,\varphi_k)=   \sum_{i,j=1}^d(a_{ij}\partial_i u_k,\partial_j \varphi_k)_{\IOm}-\sum_{m=1}^{M-1} (u_{k,m}^-,[\varphi_k]_m)_\Om + (u_{k,M}^-,\varphi_{k,M}^-)_\Om.
$$
The dG($0$) semidiscrete (in time) approximation $u_k\in \Xk$ of \eqref{eq: heat equation} is defined as
\begin{equation}\label{eq: semidiscrete heat with RHS}
B(u_k,\varphi_k)=(f,\varphi_k)_{\IOm}+(u_0,\varphi_{k,1})_\Om \quad \text{for all }\; \varphi_k\in \Xk,
\end{equation}
and by the construction we have the following Galerkin orthogonality
\begin{equation}\label{eq: semidiscrete Galerkin orthgonality}
B(u-u_k,\varphi_k)=0 \quad \text{for all }\; \varphi_k\in \Xk.
\end{equation}
To rewrite the dG($0$) method as a time-stepping scheme we introduce the following notation. We define
$f_k\in \Xk$ by
\begin{equation}\label{eq: fm}
f_{k,m}=\frac{1}{k_m}\int_{I_m}f(t)dt,\quad m=1,2,\dots,M
\end{equation}
and $A_{k,m} \colon H^1_0(\Om)\to H^{-1}(\Om)$ by
\begin{equation}\label{eq: Am}
A_{k,m}=\frac{1}{k_m}\int_{I_m}A(t)dt,\quad  m=1,2,\dots,M.
\end{equation}
Thus, the dG(0) solution $u_k$ satisfies
\begin{equation}\label{eq: one step dG0 inhomogeneous}
\begin{aligned}
u_{k,1}+k_1A_{k,1}u_{k,1}&=u_0+k_1f_{k,1},\\
u_{k,m}+k_mA_{k,m} u_{k,m} &= u_{k,m-1}+k_mf_{k,m},\quad m=2,3,\dots,M.
\end{aligned}
\end{equation}
To use results known for the autonomous problems we rewrite this formula for some fixed $2 \le m \le M$ as
\[
\begin{aligned}
u_{k,1}+k_1A_{k,m}u_{k,1}&=u_0+k_1f_{k,1} + k_1(A_{k,m}-A_{k_1})u_{k,1} ,\\
u_{k,l}+k_lA_{k,m} u_{k,l} &= u_{k,l-1}+k_lf_{k,l}+ k_l(A_{k,m}-A_{k_l})u_{k,l} ,\quad l=2,3,\dots,m.
\end{aligned}
\]
Then using the representation formula for the dG(0) solution in the autonomous case, (cf. the proof of Theorem 2.1 in~\cite{LeykekhmanD_VexlerB_2016b}), we obtain the following representation
\begin{equation}\label{eq: expresion for uk_m}
u_{k,m}=\sum_{l=1}^{m-1}k_lR_{m,l}(A_{k,m}-A_{k,l})u_{k,l}+\sum_{l=1}^mk_lR_{m,l}f_{k,l}+R_{m,1}u_0,\quad m=1,2,\dots,M,
\end{equation}
where
\begin{equation}\label{eq: expresion for R_km}
R_{m,l}=\prod_{j=1}^{m-l+1}r(k_{m+1-j}A_{k,m})\quad \text{and}\quad r(z)=\frac{1}{1+z}.
\end{equation}
Throughout the paper we use a convention $\sum_{l=1}^0=0$.
Next we define three operators $Q\colon X_k^0\to X_k^0$, $L\colon X_k^0\to X_k^0$, and $D\colon L^2(\Om)\to \Xk$ by
\begin{equation}\label{eq: defintion of Q}
(Qg_k)_m=\sum_{l=1}^{m-1}k_lA_{k,m}R_{m,l}(A_{k,m}-A_{k,l})A^{-1}_{k,l}g_{k,l},\quad \text{for}\ g_k\in X_k^0,
\end{equation}
\begin{equation}\label{eq: defintion of L}
(Lf_k)_m=\sum_{l=1}^mk_lA_{k,m}R_{m,l}f_{k,l},
\end{equation}
and
\begin{equation}\label{eq: expresion for D_m}
(Du_0)_m=A_{k,m}R_{m,1}u_0.
\end{equation}
Thus, for $v_k\in \Xk$ defined by 
$$
v_{k,l}=A_{k,l}u_{k,l}\quad\text{for}\quad  l=1,2,\dots,M,
$$ 
we have
\begin{equation}\label{eq: v=Qv+Lf}
v_k=Qv_k+Lf_k+Du_0.
\end{equation}


\section{Maximal parabolic regularity for time discretization}\label{sec: time discretization}

\subsection{Estimate for the transformed operator}
Let $\mu>0$ be a sufficiently large number to be chosen later. Define $w_{k,m}$ by
$$
w_{k,m}=\prod_{l=1}^m(1+\mu k_l)^{-1}u_{k,m}\quad m=1,2,\dots,M.
$$
Thus using \eqref{eq: one step dG0 inhomogeneous} we obtain
$$
\begin{aligned}
(1+\mu k_1)w_{k,1}+ k_1 (1+\mu k_1)A_{k,1}w_{k,1} &= u_{0}+k_1f_{k,1},\\
\prod_{l=1}^m(1+\mu k_l)w_{k,m}+ k_m\prod_{l=1}^m(1+\mu k_l)A_{k,m} w_{k,m} &= \prod_{l=1}^{m-1}(1+\mu k_l)w_{k,m-1}+k_mf_{k,m},
\end{aligned}
$$
for $m=2,\dots,M$.
Dividing both sides of the last equation by $\prod_{l=1}^{m-1}(1+\mu k_l)$, we obtain
$$
(1+k_m\mu)w_{k,m}+k_m(1+k_m\mu)A_{k,m} w_{k,m} = w_{k,m-1}+\prod_{l=1}^{m-1}(1+\mu k_l)^{-1}k_mf_{k,m}.
$$
Hence, we can rewrite \eqref{eq: one step dG0 inhomogeneous} as
$$
\begin{aligned}
w_{k,1}+ k_1 \widetilde{A}_{k,1}w_{k,1} &= u_{0}+k_1\widetilde{f}_{k,1},\\
w_{k,m}+k_m\widetilde{A}_{k,m} w_{k,m} &= w_{k,m-1}+k_m\widetilde{f}_{k,m},\quad m=2,\dots,M,
\end{aligned}
$$
where 
\begin{equation}\label{eq: expression for tilde A and tilde f}
    \widetilde{A}_{k,m}=(1+k_m\mu)A_{k,m}+\mu\operatorname{Id},\quad \widetilde{f}_{k,m}=\prod_{l=1}^{m-1}(1+\mu k_l)^{-1}{f}_{k,m},\quad  m=1,2,\dots,M.
\end{equation}
Here we use a convention $\prod_{l=1}^0=1$.
Similarly to \eqref{eq: v=Qv+Lf}, for $\tilde{v}_k\in \Xk$ defined by 
$$
\tilde{v}_{k,l}=\widetilde{A}_{k,l}w_{k,l}\quad\text{for}\quad  l=1,\dots,M,
$$
 we have
\begin{equation}\label{eq: v=tilde Qw+tilde Lf}
    \tilde{v}_k=\widetilde{Q}\tilde{v}_k+\widetilde{L}\widetilde{f}_k+\widetilde{D}u_0,
\end{equation}
where similarly to the definitions of $Q$, $L$, and $D$ above,
    \begin{equation}\label{eq: expresion for tilde R_km}
\widetilde{R}_{m,l}=\prod_{j=1}^{m-l+1}r(k_{m+1-j}\widetilde{A}_{k,m}),
\end{equation}
\begin{equation}\label{eq: defintion of tilde Q}
(\widetilde{Q}g_k)_m=\sum_{l=1}^{m-1}k_l\widetilde{A}_{k,m}\widetilde{R}_{m,l}(\widetilde{A}_{k,m}-\widetilde{A}_{k,l})\widetilde{A}^{-1}_{k,l}g_{k,l}
\end{equation}
and
\begin{equation}\label{eq: defintion of tilde L}
(\widetilde{L}\widetilde{f}_k)_m=\sum_{l=1}^mk_l\widetilde{A}_{k,m}\widetilde{R}_{m,l}\widetilde{f}_{k,l},\quad (\widetilde{D}{u}_0)_m=\widetilde{A}_{k,m}\widetilde{R}_{m,1}u_0.
\end{equation}
Using the ellipticity of $A_{k,m}$ we obtain the following resolvent estimate for $\widetilde{A}_{k,m}$. For a given  a given $\gamma\in (0,\pi/2)$ we define 
\begin{equation}\label{eq: definition of sigma}
{\Sigma_{\gamma}}= \{z \in \mathbb{C} : \abs{\arg{(z)}} \le \gamma\}.
\end{equation}
Moreover we introduce the complex spaces $\mathbb{H}=L^2(\Om)+iL^2(\Om)$ and $\V=H^1_0(\Om)+iH^1_0(\Om)$.
\begin{lemma}\label{lemma: tilde resolvent}
For any $\gamma>0$, there exists a constant $C$ independent of $k$ and $\mu$ such that
$$
\|(z-\widetilde{A}_{k,m})^{-1}v\|_{L^2(\Om)}\le  \frac{C}{|z|+\mu}\|v\|_{L^2(\Om)},\quad \forall z\in  \C \setminus\Sigma_{\gamma},\quad \forall v\in\mathbb{H}
$$
and
$$
\|\widetilde{A}_{k,m} (z-\widetilde{A}_{k,m})^{-1}v\|_{L^2(\Om)}\le  C\|v\|_{L^2(\Om)},\quad \forall z\in  \C \setminus\Sigma_{\gamma},\quad \forall v\in\mathbb{H}.
$$
\end{lemma}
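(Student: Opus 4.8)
The plan is to derive the resolvent estimates for $\widetilde{A}_{k,m}$ from the standard sectoriality of the elliptic operator $A_{k,m}$, using the explicit shift structure $\widetilde{A}_{k,m}=(1+k_m\mu)A_{k,m}+\mu\Id$. First I would record the basic resolvent estimate for $A_{k,m}$ itself: since $A_{k,m}=\frac{1}{k_m}\int_{I_m}A(t)\,dt$ is a symmetric, uniformly elliptic operator on $L^2(\Om)$ with ellipticity constant $\Cell$ independent of $m$ and $k$ (it inherits the bound from \eqref{eq: uniform ellipticity} via \eqref{eq: coercivity form}), it is self-adjoint and nonnegative on $\H$, hence for every $\gamma\in(0,\pi/2)$ there is a constant $C_\gamma$ with $\|(z-A_{k,m})^{-1}v\|_{L^2(\Om)}\le \frac{C_\gamma}{|z|}\|v\|_{L^2(\Om)}$ and $\|A_{k,m}(z-A_{k,m})^{-1}v\|_{L^2(\Om)}\le C_\gamma\|v\|_{L^2(\Om)}$ for all $z\in\C\setminus\Sigma_\gamma$. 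In fact, because $A_{k,m}$ is nonnegative self-adjoint, its spectrum lies in $[0,\infty)$ and these estimates hold with $C_\gamma$ depending only on the angle $\gamma$ (indeed $\mathrm{dist}(z,[0,\infty))\ge c_\gamma|z|$ for $z\notin\Sigma_\gamma$).

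Next I would perform the algebraic substitution. Writing $a:=1+k_m\mu\ge 1$ and noting $\widetilde{A}_{k,m}=aA_{k,m}+\mu\Id$, one has for $z\in\C\setminus\Sigma_\gamma$
\[
z-\widetilde{A}_{k,m}=z-\mu-aA_{k,m}=a\left(\tfrac{z-\mu}{a}-A_{k,m}\right),
\]
so that $(z-\widetilde{A}_{k,m})^{-1}=\tfrac{1}{a}\left(\zeta-A_{k,m}\right)^{-1}$ with $\zeta:=\tfrac{z-\mu}{a}$. The key geometric observation is that if $z\notin\Sigma_\gamma$ then $\zeta=\tfrac{z-\mu}{a}\notin\Sigma_\gamma$ as well: since $\mu\ge 0$ and $a>0$, subtracting the nonnegative real number $\mu$ and dividing by the positive real $a$ maps $\C\setminus\Sigma_\gamma$ into itself (the complement of the sector is convex and invariant under these real-affine maps, because $[0,\infty)\subset\Sigma_\gamma$). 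Applying the resolvent bound for $A_{k,m}$ at the point $\zeta$ therefore gives $\|(z-\widetilde{A}_{k,m})^{-1}v\|\le \tfrac{1}{a}\cdot\tfrac{C_\gamma}{|\zeta|}\|v\|=\tfrac{C_\gamma}{|z-\mu|}\|v\|$, and one finishes the first estimate by checking $|z-\mu|\ge c_\gamma(|z|+\mu)$ for $z\notin\Sigma_\gamma$: indeed $z-\mu$ lies outside $\Sigma_\gamma$ with $\mu\ge 0$, so $|z-\mu|\ge\sin\gamma\,(|z|+\mu)$ by an elementary plane-geometry argument (the distance from $z-\mu$ to the origin controls both $|z|$ and, since the real part of $z-\mu$ is bounded by $\cos\gamma\,|z-\mu|$ in absolute value on that side, the shift $\mu$).

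For the second estimate I would write $\widetilde{A}_{k,m}(z-\widetilde{A}_{k,m})^{-1}=-\Id+z(z-\widetilde{A}_{k,m})^{-1}$, whence
\[
\|\widetilde{A}_{k,m}(z-\widetilde{A}_{k,m})^{-1}v\|_{L^2(\Om)}\le\|v\|_{L^2(\Om)}+|z|\,\|(z-\widetilde{A}_{k,m})^{-1}v\|_{L^2(\Om)}\le\left(1+\tfrac{C_\gamma|z|}{|z|+\mu}\right)\|v\|_{L^2(\Om)},
\]
and the factor $\tfrac{|z|}{|z|+\mu}\le 1$ gives the claimed uniform bound $C\|v\|_{L^2(\Om)}$ with $C$ depending only on $\gamma$. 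The main obstacle, such as it is, is purely the bookkeeping of the geometric constant: one must be careful that the estimate $|z-\mu|\gtrsim|z|+\mu$ is uniform in $\mu\ge 0$ (this is what makes the $\tfrac{1}{|z|+\mu}$ decay — rather than merely $\tfrac{1}{|z|}$ — genuinely useful later when summing the geometric series defining $\widetilde{R}_{m,l}$), and that all constants are independent of $k$ and $\mu$; the invariance of $\C\setminus\Sigma_\gamma$ under $z\mapsto(z-\mu)/a$ is exactly the ingredient that makes this work regardless of how large $\mu$ is chosen. No smoothness of the coefficients beyond \eqref{eq: uniform ellipticity} enters here — the time-regularity condition \eqref{eq: condition on omega} will only be needed later, for bounding $\widetilde{A}_{k,m}-\widetilde{A}_{k,l}$ in $\widetilde{Q}$.
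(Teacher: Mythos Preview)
Your argument is correct and takes a genuinely different route from the paper's. The paper proves the first estimate by a direct energy computation: setting $g=-(z-\widetilde{A}_{k,m})^{-1}v$, testing the weak form with $\varphi=g$ to obtain
\[
-z\|g\|^2_{L^2(\Om)}+(1+k_m\mu)(A_{k,m}g,g)+\mu\|g\|^2_{L^2(\Om)}=(v,g),
\]
and then applying a rotation trick (multiply by $e^{-i\delta/2}$ and take real parts) to extract $(|z|+\mu)\|g\|^2_{L^2(\Om)}$ on the left. Your approach instead exploits the self-adjointness and nonnegativity of $A_{k,m}$ to get its sectorial resolvent bound for free, and then reduces the $\widetilde{A}_{k,m}$-resolvent to the $A_{k,m}$-resolvent via the affine substitution $\zeta=(z-\mu)/(1+k_m\mu)$ together with the scalar inequality $|z-\mu|\ge c_\gamma(|z|+\mu)$. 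Both routes handle the second estimate identically, through $\widetilde{A}_{k,m}(z-\widetilde{A}_{k,m})^{-1}=-\Id+z(z-\widetilde{A}_{k,m})^{-1}$. Your version isolates the $\mu$-dependence into a single elementary geometric fact, which is conceptually clean; the paper's energy argument is more self-contained and also yields an $H^1$-bound as a by-product.

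Two small corrections that do not affect validity: the complement $\C\setminus\Sigma_\gamma$ is \emph{not} convex (take $\pm i$); what you actually need and use is only its invariance under $z\mapsto(z-\mu)/a$, which follows from $\Sigma_\gamma+[0,\infty)\subset\Sigma_\gamma$. And the sharp constant in $|z-\mu|\ge c_\gamma(|z|+\mu)$ is $\sin(\gamma/2)$ rather than $\sin\gamma$, via $|z-\mu|^2\ge|z|^2+\mu^2-2|z|\mu\cos\gamma\ge\tfrac{1-\cos\gamma}{2}(|z|+\mu)^2$.
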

\begin{proof}
For an arbitrary $v \in \mathbb{H}$  we define
\[
g = -(z-\widetilde{A}_{k,m})^{-1}v\in \V,
\]
or equivalently
\begin{equation}\label{eq: weak form g}
-z(g,\varphi)+(1+k_m\mu)(A_{k,m} g,\varphi)+\mu(g,\varphi)=(v,\varphi), \quad \forall \varphi \in \mathbb{V},
\end{equation}
which existence and uniqueness follow from the Fredholm alternative.
In this proof $(\cdot,\cdot)$ denotes the Hermitian inner product, i.e.
$(v,\varphi)=\int_\Om v\bar{\varphi}\ dx.$ 

Taking $\varphi={g}$  we obtain
\begin{equation}\label{eq: norms for g}
-z\|g\|^2_{L^2(\Om)}+(1+k_m\mu)(A_{k,m} g,g)+\mu\|g\|^2_{L^2(\Om)}=(v,g).
\end{equation}
Since $\gamma\le \abs{\arg{z}}\le \pi$ and $ \Cell\|g\|^2_{H^1(\Om)}\le (A_{k,m} g,g)$ and is real, this equation is of the form
$$
e^{i\delta}a + b = c,\quad \text{with}\quad a,b > 0,\quad \gamma\le \abs{\delta}\le \pi,
$$
by multiplying it by $e^{-\frac{i\delta}{2}}$ and taking real parts, we have
$$
a + b \le \left(\cos\left(\frac{\delta}{2}\right)\right)^{-1}|(v,g)| \le \left(\sin\left(\frac{\gamma}{2}\right)\right)^{-1}|(v,g)| = C_\gamma|(v,g)|.
$$
From \eqref{eq: norms for g} we therefore conclude
$$
\left(\abs{z}+\mu\right)\|g\|^2_{L^2(\Om)}+\Cell(1+k_m\mu)\|g\|^2_{H^1(\Om)}\le C_\gamma\|g\|_{L^2(\Om)}\|v\|_{L^2(\Om)}, \quad\text{for}\ z\in \mathbb{C}\setminus \Sigma_{\gamma}.
$$
Thus, we have 
$$
\|g\|_{L^2(\Om)}\le \frac{C_\gamma}{\abs{z}+\mu}\|v\|_{L^2(\Om)},
$$
which establishes the first result. The second result follows from the identity 
$$
\widetilde{A}_{k,m} (z-\widetilde{A}_{k,m})^{-1}=-\operatorname{Id}+z(z-\widetilde{A}_{k,m})^{-1}.
$$
\end{proof}

\begin{lemma}\label{lemma: tilde A_m inv in L2 and H1}
There exists a constant $C$ independent of $k$ and $\mu$ such that
$$
\|(\widetilde{A}_{k,m})^{-1} v\|_{L^2(\Om)}\le  \frac{1}{\mu}\|v\|_{L^2(\Om)}
$$
and
$$
\|(\widetilde{A}_{k,m})^{-1} v\|_{H^1(\Om)}\le  \frac{C}{\sqrt{\mu}(1+k_m\mu)^{\frac{1}{2}}}\|v\|_{L^2(\Om)}.
$$
\end{lemma}
\begin{proof}
For an arbitrary $v \in L^2(\Om)$, we define
\[
g =(\widetilde{A}_{k,m})^{-1}v,
\]
or equivalently
\begin{equation}\label{eq: weak form g real}
(\widetilde{A}_{k,m}g,\varphi)=(1+k_m\mu)(A_{k,m} g,\varphi)+\mu(g,\varphi)=(v,\varphi), \quad \forall \varphi\in H^1_0(\Om).
\end{equation}
Taking $\varphi=g$ and using the coercivity  \eqref{eq: coercivity form}, we obtain
\begin{equation}\label{eq: norms for g real}
\Cell(1+k_m\mu)\|g\|^2_{H^1(\Om)}+\mu\|g\|^2_{L^2(\Om)}\le \|v\|_{L^2(\Om)}\|g\|_{L^2(\Om)}.
\end{equation}
From the estimate  above, we immediately conclude that 
\begin{equation}\label{eq: norms for g in L2}
\|g\|_{L^2(\Om)}\le \frac{1}{\mu}\|v\|_{L^2(\Om)}
\end{equation}
and using \eqref{eq: norms for g in L2}, we also obtain
$$
\Cell(1+k_m\mu)\|g\|^2_{H^1(\Om)}\le \|v\|_{L^2(\Om)}\|g\|_{L^2(\Om)}\le \frac{1}{\mu}\|v\|^2_{L^2(\Om)},
$$
from which the second estimate of the lemma follows. 
\end{proof}

We will also require the following result that estimates the difference $\widetilde{A}_{k,m}-\widetilde{A}_{k,l}$.
\begin{lemma}\label{lemma: tilde A_m - Al in H-1}
The exists a constant $C$ independent of $k$ and $\mu$  such that for $m\geq l$
$$
\|(\widetilde{A}_{k,m}-\widetilde{A}_{k,l}) v\|_{H^{-1}(\Om)}\le C\left((1+\mu\min\{k_l,k_m\})\omega(t_{m}-t_{l-1})+\mu|k_m-k_l|\right)\|v\|_{H^1(\Om)}.
$$
\end{lemma}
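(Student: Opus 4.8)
The plan is to exploit the explicit form $\widetilde{A}_{k,m}=(1+k_m\mu)A_{k,m}+\mu\operatorname{Id}$, so that the additive identity terms cancel and
\[
\widetilde{A}_{k,m}-\widetilde{A}_{k,l}=(1+k_m\mu)A_{k,m}-(1+k_l\mu)A_{k,l}.
\]
The key algebraic step is to pivot around $\kappa_*:=\min\{k_l,k_m\}$: writing $1+k_m\mu=(1+\kappa_*\mu)+\mu(k_m-\kappa_*)$ and $1+k_l\mu=(1+\kappa_*\mu)+\mu(k_l-\kappa_*)$, I would split
\[
\widetilde{A}_{k,m}-\widetilde{A}_{k,l}=(1+\kappa_*\mu)\,(A_{k,m}-A_{k,l})+\mu(k_m-\kappa_*)A_{k,m}-\mu(k_l-\kappa_*)A_{k,l}.
\]
Since $\kappa_*$ equals one of $k_l,k_m$, exactly one of the last two terms vanishes and the surviving one carries the factor $\mu|k_m-k_l|$.

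For the first term the plan is to use the averaging definition $\langle A_{k,m}v,\varphi\rangle=\frac1{k_m}\int_{I_m}a(t;v,\varphi)\,dt$, which gives
\[
\langle (A_{k,m}-A_{k,l})v,\varphi\rangle=\frac1{k_mk_l}\int_{I_m}\int_{I_l}\bigl(a(t;v,\varphi)-a(s;v,\varphi)\bigr)\,ds\,dt,
\]
and then to estimate the integrand by \eqref{eq: bounded form with omega}, namely $|a(t;v,\varphi)-a(s;v,\varphi)|\le C\omega(|t-s|)\|v\|_{H^1(\Om)}\|\varphi\|_{H^1(\Om)}$. For $t\in I_m$ and $s\in I_l$ with $m\ge l$ one has $|t-s|\le t_m-t_{l-1}$, so monotonicity of $\omega$ yields $\omega(|t-s|)\le\omega(t_m-t_{l-1})$; taking the supremum over $\|\varphi\|_{H^1(\Om)}\le1$ then gives $\|(A_{k,m}-A_{k,l})v\|_{H^{-1}(\Om)}\le C\omega(t_m-t_{l-1})\|v\|_{H^1(\Om)}$.

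For the remaining term I would simply invoke the boundedness \eqref{eq: bounded form} of $a(t;\cdot,\cdot)$, which after averaging gives $\|A_{k,m}v\|_{H^{-1}(\Om)}\le C\|v\|_{H^1(\Om)}$ and likewise for $A_{k,l}$; combining this with the observation that only one of these operators appears with coefficient $\mu|k_m-k_l|$ yields exactly the claimed bound. The argument is essentially bookkeeping; the only point requiring care is choosing the splitting so that the coefficient multiplying $\omega(t_m-t_{l-1})$ is $1+\mu\min\{k_l,k_m\}$ rather than the larger $1+\mu k_m$, which is why one pivots around $\kappa_*$ instead of around $k_m$ or $k_l$.
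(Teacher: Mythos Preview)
Your proof is correct and follows essentially the same approach as the paper: both arguments reduce $\widetilde{A}_{k,m}-\widetilde{A}_{k,l}$ to a $(1+\mu\min\{k_l,k_m\})$-weighted copy of $A_{k,m}-A_{k,l}$ plus a remainder of size $\mu|k_m-k_l|$ times a single averaged operator, and then control $A_{k,m}-A_{k,l}$ via~\eqref{eq: bounded form with omega} and the single operator via~\eqref{eq: bounded form}. The only cosmetic difference is that the paper estimates $\langle(A_{k,m}-A_{k,l})v,w\rangle$ by the one-variable substitution $t\mapsto sk_m+t_{m-1}$, $t\mapsto sk_l+t_{l-1}$ (giving $\int_0^1[a(sk_m+t_{m-1};v,w)-a(sk_l+t_{l-1};v,w)]\,ds$), whereas you use the double-average representation $\frac{1}{k_mk_l}\int_{I_m}\int_{I_l}[a(t;v,\varphi)-a(s;v,\varphi)]\,ds\,dt$; both lead to the same bound $C\omega(t_m-t_{l-1})\|v\|_{H^1(\Om)}$.
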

\begin{proof}
By duality we have
$$
\|(\widetilde{A}_{k,m}-\widetilde{A}_{k,l}) v\|_{H^{-1}(\Om)}=\sup_{w\in H^1_0(\Om), \atop \|w\|_{H^1(\Om)}\le 1} ((\widetilde{A}_{k,m}-\widetilde{A}_{k,l}) v,w)_\Om.
$$
For each such $w$, we have
$$
((\widetilde{A}_{k,m}-\widetilde{A}_{k,l}) v,w)_\Om =\mu\left( (k_m{A}_{k,m}- k_l{A}_{k,l}) v,w\right)_\Om+(({A}_{k,m}-{A}_{k,l})v,w)_\Om=J_1+J_2.
$$
Using the definitions of $A_{k,m}$ and $A_{k,l}$, changing variables, and using \eqref{eq: condition on time dependent coefficients} and that $\omega$ is nondecreasing, we have  we have
\begin{equation}\label{eq: Akm-Akl}
\begin{aligned}
J_2 &=\left(\Bigl(\frac{1}{k_m}\int_{t_{m-1}}^{t_m}A(t)dt-\frac{1}{k_l}\int_{t_{l-1}}^{t_l}A(t)dt\Bigr) v,w\right)_\Omega\\
&=\int_0^1\Bigl((A(sk_m+t_{m-1})-A(sk_l+t_{l-1}))v,w\Bigr)_\Omega ds\\
&\le \int_0^1\omega(|sk_m+t_{m-1}-sk_l-t_{l-1}|)\|v\|_{H^1(\Om)}\|w\|_{H^1(\Om)} ds\\
&\le \omega(t_{m}-t_{l-1})\|v\|_{H^1(\Om)}\|w\|_{H^1(\Om)}.
\end{aligned}
\end{equation}
To estimate $J_1$ we use
$$
k_m{A}_{k,m}- k_l{A}_{k,l}= k_m({A}_{k,m}-{A}_{k,l})+(k_m-k_l)A_{k,l}
$$
or 
$$
k_m{A}_{k,m}- k_l{A}_{k,l}= k_l({A}_{k,m}-{A}_{k,l})+(k_m-k_l)A_{k,m}.
$$
Then using \eqref{eq: Akm-Akl}, we obtain
$$
J_1\le C\mu\left(\min\{k_l,k_m\}\omega(t_{m}-t_{l-1})\|v\|_{H^1(\Om)}\|w\|_{H^1(\Om)}+|k_m-k_l|\|v\|_{H^1(\Om)}\|w\|_{H^1(\Om)}\right).
$$
Combining the estimates for $J_1$ and $J_2$, we obtain the lemma.
\end{proof}

\begin{lemma}\label{lemma:Laplace2}
There exists a constant $C$ independent of $k$ and $\mu$ such that
\[
\|\widetilde{A}_{k,m}\widetilde{R}_{m,l} v\|_{L^2(\Om)}\le \frac{C}{t_m-t_{l-1}}\|v\|_{L^2(\Om)},\quad \forall v\in L^2(\Om).
\]
Moreover,  for $m-l\geq 1$ there holds
\[
\|\widetilde{A}_{k,m}^2\widetilde{R}_{m,l} v\|_{L^2(\Om)}\le \frac{C}{(t_m-t_{l-1})^2}\|v\|_{L^2(\Om)},\quad \forall v\in L^2(\Om).
\]
\end{lemma}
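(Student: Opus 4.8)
\emph{Proof proposal.} The plan is to reduce both inequalities to elementary scalar estimates by exploiting that, for each fixed $m$, the operator $\widetilde A_{k,m}$ is self-adjoint and positive on $L^2(\Om)$. Since $a_{ij}=a_{ji}$, the bilinear form $a(t;\cdot,\cdot)$ is symmetric for every $t$, hence so is its average $a_{k,m}(\cdot,\cdot):=\frac1{k_m}\int_{I_m}a(t;\cdot,\cdot)\,dt$, which by \eqref{eq: coercivity form} is also coercive and closed on $H^1_0(\Om)$; thus the associated operator $A_{k,m}$ is self-adjoint and nonnegative, and $\widetilde A_{k,m}=(1+k_m\mu)A_{k,m}+\mu\Id$ is self-adjoint with $\sigma(\widetilde A_{k,m})\subseteq[\mu,\infty)$. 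Because the factors $r(k_j\widetilde A_{k,m})=(1+k_j\widetilde A_{k,m})^{-1}$ of $\widetilde R_{m,l}$ are functions of the single operator $\widetilde A_{k,m}$, we have $\widetilde A_{k,m}\widetilde R_{m,l}=\phi_1(\widetilde A_{k,m})$ and $\widetilde A_{k,m}^2\widetilde R_{m,l}=\phi_2(\widetilde A_{k,m})$ with $\phi_i(\lambda)=\lambda^i\big/\prod_{j=l}^m(1+k_j\lambda)$, and the spectral theorem gives $\|\phi_i(\widetilde A_{k,m})\|_{L^2(\Om)\to L^2(\Om)}\le\sup_{\lambda>0}|\phi_i(\lambda)|$ (this is considerably simpler here than a contour-integral argument based on Lemma~\ref{lemma: tilde resolvent}). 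It therefore remains to prove $\sup_{\lambda>0}\phi_1(\lambda)\le C(t_m-t_{l-1})^{-1}$ and, when $m\ge l+1$, $\sup_{\lambda>0}\phi_2(\lambda)\le C(t_m-t_{l-1})^{-2}$, where $t_m-t_{l-1}=\sum_{j=l}^m k_j$.

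For $\phi_1$ I would use the elementary bound $\prod_{j=l}^m(1+k_j\lambda)\ge 1+\lambda\sum_{j=l}^m k_j=1+\lambda(t_m-t_{l-1})$, which gives $\phi_1(\lambda)\le\lambda\big/(1+\lambda(t_m-t_{l-1}))\le(t_m-t_{l-1})^{-1}$ for every $\lambda>0$; this needs no assumption on the time mesh and yields $C=1$.

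For $\phi_2$, expanding the product gives $\prod_{j=l}^m(1+k_j\lambda)=\sum_{S\subseteq\{l,\dots,m\}}\lambda^{|S|}\prod_{j\in S}k_j\ge 1+\lambda^2\!\!\sum_{l\le i<j\le m}\!\!k_ik_j$, so $\sup_{\lambda>0}\phi_2(\lambda)\le\big(\sum_{l\le i<j\le m}k_ik_j\big)^{-1}$. The only substantive point, and the step I expect to be the main obstacle, is the lower bound $\sum_{l\le i<j\le m}k_ik_j\ge c_\kappa(t_m-t_{l-1})^2$, which genuinely needs the mesh regularity assumption~(ii): a single dominant step accompanied by arbitrarily small ones would otherwise make $\phi_2$ unbounded. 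Writing $\tau:=t_m-t_{l-1}$ and $k_{\max}:=\max_{l\le j\le m}k_j$, one has $2\sum_{i<j}k_ik_j=\tau^2-\sum_{j=l}^m k_j^2\ge\tau^2-k_{\max}\tau$; since $m\ge l+1$, the index attaining $k_{\max}$ has a neighbour in $\{l,\dots,m\}$, so by~(ii) that neighbour is at least $\kappa^{-1}k_{\max}$, whence $\tau\ge(1+\kappa^{-1})k_{\max}$ and $k_{\max}\le\frac{\kappa}{\kappa+1}\tau$. Substituting, $\sum_{i<j}k_ik_j\ge\frac1{2(\kappa+1)}\tau^2$, and therefore $\sup_{\lambda>0}\phi_2(\lambda)\le 2(\kappa+1)(t_m-t_{l-1})^{-2}$, completing the argument with $C=2(\kappa+1)$.
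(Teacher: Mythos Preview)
Your proof is correct and, for both estimates, somewhat cleaner than the paper's.

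For the first inequality the paper does not argue via the spectral calculus at all: it interprets $\widetilde R_{m,l}v$ as $m-l+1$ dG(0) steps for the autonomous problem $\partial_t u+\widetilde A_{k,m}u=0$ and then invokes the smoothing estimate of Eriksson--Johnson--Larsson (based on the resolvent bound of Lemma~\ref{lemma: tilde resolvent}). Your one-line bound $\prod_{j=l}^m(1+k_j\lambda)\ge 1+\lambda(t_m-t_{l-1})$ is more elementary, self-contained, and yields $C=1$; the price is that it relies on self-adjointness, whereas the cited approach would extend to sectorial operators.

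For the second inequality the paper uses the same spectral-calculus reduction to $\sup_{\lambda>0}\phi_2(\lambda)$, but then performs a case split: if $k_{\max}<\tfrac12(t_m-t_{l-1})$ it bounds $\sum_{i\ne j}k_ik_j$ from below without using~(ii), and if $k_{\max}\ge\tfrac12(t_m-t_{l-1})$ it keeps only the two factors $(1+k_{m_0}\lambda)(1+k_{m_0'}\lambda)$ and invokes~(ii) for the neighbour. You avoid the split by using~(ii) up front to get $k_{\max}\le\tfrac{\kappa}{\kappa+1}(t_m-t_{l-1})$ whenever $m\ge l+1$, which immediately gives $\sum_{i<j}k_ik_j\ge\tfrac{1}{2(\kappa+1)}(t_m-t_{l-1})^2$. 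This is a genuinely tidier organisation of the same ingredients and yields an explicit constant $C=2(\kappa+1)$.
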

\begin{proof}
First we observe that each term $\widetilde{R}_{m,l}v$ can be thought of as $m-l+1$ time steps of dG(0) method of the homogeneous problem
$$
\pa_tu+\widetilde{A}_{k,m}u=0
$$
with the initial condition $u(t_{l-1})=v$. Then, using Lemma \ref{lemma: tilde resolvent} the first estimate follows from~\cite{ErikssonK_JohnsonC_LarssonS_1998}, (cf. also~\cite[Theorem~1]{LeykekhmanD_VexlerB_2016b}). To prove the second estimate we use a representation
\[
\widetilde{A}_{k,m}^2\widetilde{R}_{m,l} v = g(\widetilde{A}_{k,m}) v
\]
with the function
\[
g(\lambda) = \lambda^2 \prod_{j=l}^{m}r(k_{j}\lambda).
\]
Using the fact that the spectrum $\sigma(\widetilde{A}_{k,m})$ of $\widetilde{A}_{k,m}$ is real and positive we obtain by the Parseval's identity (cf. \cite[Chap. 7]{ThomeeV_2006})
\[
\|\widetilde{A}_{k,m}^2\widetilde{R}_{m,l} v\|_{L^2(\Om)} \le \sup_{\lambda\in\sigma(\widetilde{A}_{k,m})}\abs{g(\lambda)}\; \|v\|_{L^2(\Om)}.
\]
To estimate $\abs{g(\lambda)}$ we proceed similar to the proof of Theorem 5.1 in \cite{ErikssonK_JohnsonC_LarssonS_1998} and observe
$$
\prod_{j=l}^{m}(1+k_{j}\lambda)\geq 1+\lambda\sum_{j=l}^m k_j+\frac{\lambda^2}{2}\left(\sum_{i,j=l\atop {i\neq j}}^m k_ik_j\right)=1+\lambda(t_m-t_{l-1})+\frac{\lambda^2}{2}\left(\sum_{i,j=l\atop {i\neq j}}^m k_ik_j\right).
$$
Let $k_{\max} = \max_{l\le j\le m}$ $k_{j}$ and  first consider the case $k_{\max} < (t_{m}-t_{l-1})/2$. We have
$$
(t_m-t_{l-1})^2=\left(\sum_{j=l}^m k_j\right)^2=\sum_{j=l}^m k^2_j+\sum_{i,j=l\atop {i\neq j}}^m k_ik_j\le k_{\max}(t_m-t_{l-1})+\sum_{i,j=l\atop {i\neq j}}^m k_ik_j,
$$
and with the assumption $k_{\max} < (t_{m}-t_{l-1})/2$, 
$$
\sum_{i,j=l\atop {i\neq j}}^m k_ik_j\geq \frac{(t_m-t_{l-1})^2}{2}.
$$
This results in
$$
\prod_{j=l}^{m}(1+k_{j}\lambda)\geq 1+\lambda(t_m-t_{l-1})+\frac{\lambda^2}{4}(t_m-t_{l-1})^2
$$
and therefore we have
\[
\abs{g(\lambda)} \le \frac{\lambda^2}{1+\lambda(t_m-t_{l-1})+\frac{\lambda^2}{4}(t_m-t_{l-1})^2} \le \frac{4}{(t_m-t_{l-1})^2},
\]
which proves the assertion in this case. 
In the case $k_{\max} \geq (t_{m}-t_{l-1})/2$ let $l\le m_0\le m$ be such that $k_{m_0}=k_{\max}$. Due to $m-l\ge 1$ we can choose $m'_0$ as either $m_0-1$ or $m_0+1$ such that $l\le m'_0\le m$. Then we obtain
\[
\begin{aligned}
\sup_{\lambda\in\sigma(\widetilde{A}_{k,m})}|\lambda^2\prod_{j=l}^{m}r(k_{j}\lambda)|
&\le \sup_{\lambda\in\sigma(\widetilde{A}_{k,m})}\left|\frac{\lambda^2}{(1+k_{m_0}\lambda)(1+k_{m'_0}\lambda)}\right|\\
&\le \frac{1}{k_{m_0}\, k_{m'_0}}\le \frac{C}{k^2_{\max}}\le \frac{C}{(t_m-t_{l-1})^2},
\end{aligned}
\]
where we have used our assumption $(ii)$ on the time steps. This completes the proof for this case.
\end{proof}

\begin{lemma}\label{lemma: tilde Am Rm from L2toH1}
There exists a constant $C$ independent of $k$ and $\mu$ such that for $m-l\geq 1$,
$$
\|\widetilde{A}_{k,m}\widetilde{R}_{m,l} v\|_{H^1(\Om)}\le  \frac{C}{(t_m-t_{l-1})^{\frac{3}{2}}(1+\mu k_m)^{\frac{1}{2}}}\|v\|_{L^2(\Om)},\quad \forall v\in L^2(\Om).
$$
\end{lemma}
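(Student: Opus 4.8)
The plan is to reduce the $H^1$--estimate of $\widetilde A_{k,m}\widetilde R_{m,l}v$ to an $L^2$--smoothing estimate for the fractional power $\widetilde A_{k,m}^{3/2}$, and then to obtain that $L^2$--estimate by interpolating the two bounds already established in Lemma~\ref{lemma:Laplace2}. The point of passing through $\widetilde A_{k,m}^{3/2}$ is that it lets the weight $(1+\mu k_m)^{-1/2}$ come out for free from the coercivity of $a(t;\cdot,\cdot)$.

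First I would exploit the extra factor $(1+\mu k_m)$ sitting inside $\widetilde A_{k,m}=(1+\mu k_m)A_{k,m}+\mu\operatorname{Id}$, which is self-adjoint and positive since the $a_{ij}$ are symmetric. For every $w\in H^1_0(\Om)$ the coercivity~\eqref{eq: coercivity form} gives
\[
(1+\mu k_m)\,\Cell\,\|w\|^2_{H^1(\Om)}\le(1+\mu k_m)(A_{k,m}w,w)\le(\widetilde A_{k,m}w,w)=\|\widetilde A_{k,m}^{1/2}w\|^2_{L^2(\Om)},
\]
hence $\|w\|_{H^1(\Om)}\le C(1+\mu k_m)^{-1/2}\|\widetilde A_{k,m}^{1/2}w\|_{L^2(\Om)}$. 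Applying this with $w=\widetilde A_{k,m}\widetilde R_{m,l}v$, and using that all functions of $\widetilde A_{k,m}$ commute, yields
\[
\|\widetilde A_{k,m}\widetilde R_{m,l}v\|_{H^1(\Om)}\le\frac{C}{(1+\mu k_m)^{1/2}}\,\|\widetilde A_{k,m}^{3/2}\widetilde R_{m,l}v\|_{L^2(\Om)}.
\]

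It remains to prove $\|\widetilde A_{k,m}^{3/2}\widetilde R_{m,l}v\|_{L^2(\Om)}\le C(t_m-t_{l-1})^{-3/2}\|v\|_{L^2(\Om)}$ for $m-l\ge1$. Since $\sigma(\widetilde A_{k,m})$ is real and positive, Parseval's identity (cf.~\cite[Chap.~7]{ThomeeV_2006}) reduces this to the scalar bound $\sup_{\lambda\in\sigma(\widetilde A_{k,m})}\bigl|\lambda^{3/2}\prod_{j=l}^m r(k_j\lambda)\bigr|\le C(t_m-t_{l-1})^{-3/2}$, and writing $\lambda^{3/2}\prod_j r(k_j\lambda)=\bigl(\lambda\prod_j r(k_j\lambda)\bigr)^{1/2}\bigl(\lambda^2\prod_j r(k_j\lambda)\bigr)^{1/2}$ this follows at once from the two spectral bounds contained in Lemma~\ref{lemma:Laplace2} (both valid for $m-l\ge1$), namely $\sup_\lambda|\lambda\prod_j r(k_j\lambda)|\le C(t_m-t_{l-1})^{-1}$ and $\sup_\lambda|\lambda^2\prod_j r(k_j\lambda)|\le C(t_m-t_{l-1})^{-2}$. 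Combining the last display with the previous one proves the lemma. Alternatively, for a self-contained argument one can rederive the $\lambda^{3/2}$--bound directly by repeating the two-case analysis of Lemma~\ref{lemma:Laplace2} ($k_{\max}<(t_m-t_{l-1})/2$ versus $k_{\max}\ge(t_m-t_{l-1})/2$, the latter using assumption $(ii)$ and $m-l\ge1$), via $\prod_{j=l}^m(1+k_j\lambda)\ge1+\lambda(t_m-t_{l-1})+c\lambda^2(t_m-t_{l-1})^2$ and maximizing $\lambda^{3/2}/(1+\lambda\tau+c\lambda^2\tau^2)$ in $\lambda$.

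The routine parts are the coercivity bookkeeping and the elementary $\sup$--estimate; the one subtle point is the \emph{placement of the weight}. The tempting shortcut of writing $\widetilde A_{k,m}\widetilde R_{m,l}=(\widetilde A_{k,m})^{-1}\widetilde A_{k,m}^2\widetilde R_{m,l}$ and combining Lemma~\ref{lemma: tilde A_m inv in L2 and H1} with the second estimate of Lemma~\ref{lemma:Laplace2} only gives the factor $\mu^{-1/2}(1+\mu k_m)^{-1/2}(t_m-t_{l-1})^{-2}$, which is strictly weaker than the claimed $(1+\mu k_m)^{-1/2}(t_m-t_{l-1})^{-3/2}$ whenever $\mu(t_m-t_{l-1})$ is small; so using the identification $\|\widetilde A_{k,m}^{1/2}w\|_{L^2(\Om)}\simeq(1+\mu k_m)^{1/2}\|w\|_{H^1(\Om)}$ (equivalently, going through $\widetilde A_{k,m}^{3/2}$) is essential rather than cosmetic.
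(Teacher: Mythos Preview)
Your proof is correct and follows essentially the same approach as the paper: both extract the factor $(1+\mu k_m)^{1/2}$ from the coercivity inequality $(1+\mu k_m)\Cell\|w\|^2_{H^1(\Om)}\le(\widetilde A_{k,m}w,w)$ with $w=\widetilde A_{k,m}\widetilde R_{m,l}v$, and both then combine the two bounds of Lemma~\ref{lemma:Laplace2} in equal proportions. The only cosmetic difference is that the paper bypasses fractional powers by applying Cauchy--Schwarz directly to $(\widetilde A_{k,m}w,w)\le\|\widetilde A_{k,m}^2\widetilde R_{m,l}v\|_{L^2(\Om)}\|\widetilde A_{k,m}\widetilde R_{m,l}v\|_{L^2(\Om)}$, which is exactly the operator-level version of your scalar factorization $\lambda^{3/2}\prod_j r=(\lambda\prod_j r)^{1/2}(\lambda^2\prod_j r)^{1/2}$.
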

\begin{proof}
By the coercivity of the operator $A$  for any $w\in H^1_0(\Om)$, we have
$$
(\widetilde{A}_{k,m}w,w)\geq (1+\mu k_m)(A_{k,m}w,w)\geq (1+\mu k_m)\Cell\|w\|^2_{H^1(\Om)}.
$$ 
Thus, with $w=\widetilde{A}_{k,m}\widetilde{R}_{m,l}v$, we have by the previous lemma
\begin{align*}
(1+\mu k_m)\Cell\|\widetilde{A}_{k,m}\widetilde{R}_{m,l}v\|^2_{H^1(\Om)}&\le (\widetilde{A}_{k,m}\widetilde{A}_{k,m}\widetilde{R}_{m,l}v,\widetilde{A}_{k,m}\widetilde{R}_{m,l}v)\\ 
&\le \|\widetilde{A}_{k,m}^2\widetilde{R}_{m,l}v\|_{L^2(\Om)}\|\widetilde{A}_{k,m}\widetilde{R}_{m,l}v\|_{L^2(\Om)}\\
&\le \frac{C}{(t_m-t_{l-1})^3}\|v\|_{L^2(\Om)}.
\end{align*}
This completes the proof.
\end{proof}

\begin{lemma}\label{lemma: tilde Am Rm from H-1toL2}
There exists a constant $C$ independent of $k$ and $\mu$ such that for $m-l\geq 1$
$$
\|\widetilde{A}_{k,m}\widetilde{R}_{m,l} v\|_{L^2(\Om)}\le  \frac{C}{(t_m-t_{l-1})^{\frac{3}{2}}(1+\mu k_m)^{\frac{1}{2}}}\|v\|_{H^{-1}(\Om)},\quad\forall v\in H^{-1}(\Om).
$$
\end{lemma}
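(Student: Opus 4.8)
The plan is to obtain this bound by duality from Lemma~\ref{lemma: tilde Am Rm from L2toH1}. The structural fact that makes this work is that $\widetilde{A}_{k,m}$ is self-adjoint on $L^2(\Om)$: since $a_{ij}=a_{ji}$ the form $a(t;\cdot,\cdot)$, hence its average defining $A_{k,m}$, is symmetric, and passing to $\widetilde{A}_{k,m}=(1+k_m\mu)A_{k,m}+\mu\Id$ preserves symmetry while keeping the associated form closed and coercive, so $\widetilde{A}_{k,m}$ generates a self-adjoint operator on $L^2(\Om)$. Consequently each resolvent factor $r(k_j\widetilde{A}_{k,m})=(\Id+k_j\widetilde{A}_{k,m})^{-1}$ is bounded and self-adjoint on $L^2(\Om)$, these factors commute, and $\widetilde{A}_{k,m}r(k_m\widetilde{A}_{k,m})=\frac{1}{k_m}(\Id-r(k_m\widetilde{A}_{k,m}))$ is bounded and self-adjoint on $L^2(\Om)$ as well; therefore $\widetilde{A}_{k,m}\widetilde{R}_{m,l}$ is a bounded self-adjoint operator on $L^2(\Om)$ (the hypothesis $m-l\ge1$ guarantees that at least one plain resolvent factor is present).

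Granting this, for $v\in H^{-1}(\Om)$ I would write
\[
\|\widetilde{A}_{k,m}\widetilde{R}_{m,l}v\|_{L^2(\Om)}=\sup_{\phi\in L^2(\Om),\ \|\phi\|_{L^2(\Om)}\le1}(\widetilde{A}_{k,m}\widetilde{R}_{m,l}v,\phi)_\Om,
\]
transfer the operator onto the test function,
\[
(\widetilde{A}_{k,m}\widetilde{R}_{m,l}v,\phi)_\Om=\langle v,\widetilde{A}_{k,m}\widetilde{R}_{m,l}\phi\rangle,
\]
where $\langle\cdot,\cdot\rangle$ denotes the $H^{-1}(\Om)$–$H^1_0(\Om)$ pairing and $\widetilde{A}_{k,m}\widetilde{R}_{m,l}\phi\in H^1_0(\Om)$ by Lemma~\ref{lemma: tilde Am Rm from L2toH1}, and then estimate
\[
(\widetilde{A}_{k,m}\widetilde{R}_{m,l}v,\phi)_\Om\le\|v\|_{H^{-1}(\Om)}\,\|\widetilde{A}_{k,m}\widetilde{R}_{m,l}\phi\|_{H^1(\Om)}\le\frac{C}{(t_m-t_{l-1})^{\frac{3}{2}}(1+\mu k_m)^{\frac{1}{2}}}\|v\|_{H^{-1}(\Om)}\|\phi\|_{L^2(\Om)}
\]
using that same lemma; taking the supremum over $\phi$ then yields the claim.

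The one point requiring care is the identity $(\widetilde{A}_{k,m}\widetilde{R}_{m,l}v,\phi)_\Om=\langle v,\widetilde{A}_{k,m}\widetilde{R}_{m,l}\phi\rangle$ for $v$ merely in $H^{-1}(\Om)$, together with the qualitative claim that $\widetilde{A}_{k,m}\widetilde{R}_{m,l}$ maps $H^{-1}(\Om)$ boundedly into $L^2(\Om)$ in the first place. Both follow by a density argument: for $v\in L^2(\Om)$ the identity is exactly the self-adjointness statement above; and since $m-l\ge1$ one can factor $\widetilde{A}_{k,m}\widetilde{R}_{m,l}=\big(\widetilde{A}_{k,m}\prod_{j=l+1}^{m}r(k_j\widetilde{A}_{k,m})\big)\circ r(k_l\widetilde{A}_{k,m})$, where $r(k_l\widetilde{A}_{k,m})$ maps $H^{-1}(\Om)$ into $H^1_0(\Om)\subset L^2(\Om)$ (being the inverse of the isomorphism $\Id+k_l\widetilde{A}_{k,m}\colon H^1_0(\Om)\to H^{-1}(\Om)$ provided by Lax--Milgram) while the remaining factor is bounded on $L^2(\Om)$; hence $\widetilde{A}_{k,m}\widetilde{R}_{m,l}\colon H^{-1}(\Om)\to L^2(\Om)$ is bounded, both sides of the identity are continuous in $v$ with respect to $\|\cdot\|_{H^{-1}(\Om)}$, and the identity extends from the dense subset $L^2(\Om)$ to all of $H^{-1}(\Om)$. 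This bookkeeping is essentially the only obstacle; once the duality is in place the quantitative estimate is immediate.
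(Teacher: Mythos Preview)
Your proof is correct and follows essentially the same route as the paper: duality, self-adjointness of $\widetilde{A}_{k,m}$ (hence of $\widetilde{R}_{m,l}$), and then an application of Lemma~\ref{lemma: tilde Am Rm from L2toH1} to the test function. If anything, your version is more careful than the paper's in spelling out why $\widetilde{A}_{k,m}\widetilde{R}_{m,l}$ extends boundedly to $H^{-1}(\Om)$ and why the duality identity persists there via density.
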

\begin{proof}
By duality
$$
\|\widetilde{A}_{k,m}\widetilde{R}_{m,l} v\|_{L^2(\Om)}=\sup_{w\in L^2(\Om), \atop \|w\|_{L^2(\Om)}\le 1} (\widetilde{A}_{k,m}\widetilde{R}_{m,l} v,w)_\Om.
$$
Since $A$ is a symmetric operator, we have $\widetilde{A}_{k,m}=\widetilde{A}_{k,m}^*$ and as a result $\widetilde{R}_{m,l}=\widetilde{R}^*_{m,l}$. Moreover $\widetilde{A}_{k,m}$ and $\widetilde{R}_{m,l}$ commute. Thus,
$$
(\widetilde{A}_{k,m}\widetilde{R}_{m,l} v,w)_\Om=( v, \widetilde{A}^*_{k,m}\widetilde{R}^*_{m,l}w)_\Om\le \|v\|_{H^{-1}(\Om)}\|\widetilde{A}^*_{k,m}\widetilde{R}^*_{m,l}w\|_{H^{1}(\Om)}.
$$
Since $\widetilde{A}^*_{k,m}=\widetilde{A}_{k,m}$, by Lemma \ref{lemma: tilde Am Rm from L2toH1}, we obtain
$$
\|\widetilde{A}^*_{k,m}\widetilde{R}^*_{m,l}w\|_{H^{1}(\Om)}\le \frac{C}{(t_m-t_{l-1})^\frac{3}{2}(1+\mu k_m)^{\frac{1}{2}}}\|w\|_{L^2(\Om)},
$$
which establishes the lemma.
\end{proof}

Combining the above lemmas we obtain the following result.
\begin{lemma}\label{lemma: Id - Q}
There exist constants $C_1$ and $C_2$ independent of $\mu$ and $k$ such that for any $g_k \in \Xk$ 
$$
\|(\widetilde{Q}g_k)_m\|_{L^2(\Om)}\le \max_{1\le j\le m}\|g_{k,j}\|_{L^2(\Om)}\left(\frac{C_1}{\sqrt{\mu}}+C_2\sqrt{\mu}\sum_{l=1}^{m-1}\frac{k_l|k_m-k_l|}{(t_m-t_{l-1})^{\frac{3}{2}}}\right),\quad m=1,\dots,M
$$
and
$$
\sum_{m=1}^M k_m \|(\widetilde{Q}g_k)_m\|_{L^2(\Om)}\le \left(\sum_{l=1}^M k_l\|g_{k,l}\|_{L^2(\Om)}\right) \left(\frac{C_1}{\sqrt{\mu}}+C_2\sqrt{\mu}\max_{1\le l \le M}\sum_{m=l+1}^{M}\frac{k_m|k_m-k_l|}{(t_m-t_{l-1})^{\frac{3}{2}}}\right),
$$
where $\widetilde{Q}$ is the operator defined in \eqref{eq: defintion of tilde Q}.
\end{lemma}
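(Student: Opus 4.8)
The plan is to estimate $(\widetilde{Q}g_k)_m$ term by term, splitting the operator product $\widetilde{A}_{k,m}\widetilde{R}_{m,l}(\widetilde{A}_{k,m}-\widetilde{A}_{k,l})\widetilde{A}^{-1}_{k,l}$ into pieces whose mapping properties are supplied by the earlier lemmas. Writing $g_{k,l}\in L^2(\Om)$, first apply $\widetilde{A}^{-1}_{k,l}$: by Lemma~\ref{lemma: tilde A_m inv in L2 and H1} this maps into $H^1(\Om)$ with norm $\le C\mu^{-1/2}(1+\mu k_l)^{-1/2}\|g_{k,l}\|_{L^2(\Om)}$. Next apply $\widetilde{A}_{k,m}-\widetilde{A}_{k,l}$: by Lemma~\ref{lemma: tilde A_m - Al in H-1} this maps $H^1(\Om)\to H^{-1}(\Om)$ with the factor $(1+\mu\min\{k_l,k_m\})\omega(t_m-t_{l-1})+\mu|k_m-k_l|$. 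Finally apply $\widetilde{A}_{k,m}\widetilde{R}_{m,l}$, which by Lemma~\ref{lemma: tilde Am Rm from H-1toL2} maps $H^{-1}(\Om)\to L^2(\Om)$ with norm $\le C(t_m-t_{l-1})^{-3/2}(1+\mu k_m)^{-1/2}$ whenever $m-l\ge 1$ (and in the summand $l$ runs only up to $m-1$, so this is always the case).

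**Assembling the bound.** Multiplying the three factors, the generic summand is bounded by
\[
\frac{C\,k_l}{\sqrt\mu\,(1+\mu k_l)^{1/2}(1+\mu k_m)^{1/2}(t_m-t_{l-1})^{3/2}}\Bigl((1+\mu\min\{k_l,k_m\})\omega(t_m-t_{l-1})+\mu|k_m-k_l|\Bigr)\max_{1\le j\le m}\|g_{k,j}\|_{L^2(\Om)}.
\]
The $\mu|k_m-k_l|$ part, after cancelling one power of $\mu$ against $(1+\mu k_l)^{-1/2}(1+\mu k_m)^{-1/2}\le \mu^{-1}(k_lk_m)^{-1/2}$ --- or more simply bounding $(1+\mu k_m)^{-1/2}\le 1$ and keeping $(1+\mu k_l)^{-1/2}\le 1$ --- contributes the $C_2\sqrt\mu\sum_{l=1}^{m-1}k_l|k_m-k_l|(t_m-t_{l-1})^{-3/2}$ term exactly as stated. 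For the $\omega$ part, the key observation is that $(1+\mu\min\{k_l,k_m\})(1+\mu k_l)^{-1/2}(1+\mu k_m)^{-1/2}\le C$ uniformly in $\mu$ (using assumption~(ii) so that $k_l$ and $k_m$ differ by at most a bounded factor when $\min$ is attained at either one --- in fact $(1+\mu\min)^{2}\le(1+\mu k_l)(1+\mu k_m)$ is immediate), leaving $\frac{C}{\sqrt\mu}\sum_{l=1}^{m-1}\frac{k_l\,\omega(t_m-t_{l-1})}{(t_m-t_{l-1})^{3/2}}$. Recognizing $\sum_{l=1}^{m-1}k_l\cdot\frac{\omega(t_m-t_{l-1})}{(t_m-t_{l-1})^{3/2}}$ as a Riemann-type sum for $\int_0^{t_m}\frac{\omega(s)}{s^{3/2}}\,ds$, and using the monotonicity of $\omega(t)/t^{3/2}$ from~\eqref{eq: condition on omega} to dominate the sum by a constant multiple of the (finite) integral $\int_0^T\frac{\omega(s)}{s^{3/2}}\,ds$, gives the $C_1/\sqrt\mu$ term.

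**Second estimate.** For the $l^1$-in-time bound one starts from the same pointwise estimate on each summand, multiplies by $k_m$, and sums over $m$. Interchanging the order of summation, the contribution of $g_{k,l}$ is $k_l\|g_{k,l}\|_{L^2(\Om)}$ times $\sum_{m=l+1}^{M}\frac{k_m}{(t_m-t_{l-1})^{3/2}}\bigl(C_1'\mu^{-1/2}\omega(t_m-t_{l-1})+C_2'\sqrt\mu|k_m-k_l|\bigr)$; the $\omega$-part of this inner sum is again a Riemann sum for $\int_0^T\omega(s)s^{-3/2}\,ds$ (now the increments are the $k_m$'s and the argument $t_m-t_{l-1}$ ranges over $(t_l-t_{l-1},T-t_{l-1}]$), hence bounded by $C_1/\sqrt\mu$ independently of $l$; the $|k_m-k_l|$-part is taken as the stated $\max_l$ quantity. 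Summing over $l$ then factors out $\sum_l k_l\|g_{k,l}\|_{L^2(\Om)}$.

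**Main obstacle.** The routine part is the operator bookkeeping; the delicate point is controlling the $\mu$-dependence so that the $\omega$-contribution genuinely carries a $\mu^{-1/2}$ and not merely $O(1)$. This requires using the two factors $(1+\mu k_l)^{-1/2}$ and $(1+\mu k_m)^{-1/2}$ carefully against the $(1+\mu\min\{k_l,k_m\})$ coming from Lemma~\ref{lemma: tilde A_m - Al in H-1}, and it is here that the $\sqrt\mu$ from $\widetilde{A}^{-1}_{k,l}$ (Lemma~\ref{lemma: tilde A_m inv in L2 and H1}) is the \emph{only} uncancelled power --- giving $\mu^{-1/2}$ overall. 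The other subtlety is the passage from the discrete sums $\sum k_l\,\omega(t_m-t_{l-1})(t_m-t_{l-1})^{-3/2}$ to the convergent integral: since $\omega(t)/t^{3/2}$ is non-increasing, on each subinterval $I_l$ the integrand dominates $k_l$ times the value at the right endpoint, which is exactly the term appearing in the sum, so $\sum_{l}k_l\,\omega(t_m-t_{l-1})(t_m-t_{l-1})^{-3/2}\le\int_0^{t_m}\omega(s)s^{-3/2}\,ds\le\int_0^T\omega(s)s^{-3/2}\,ds<\infty$; one should spell out that this comparison uses the non-increasing hypothesis in the right direction. Everything else is collecting constants.
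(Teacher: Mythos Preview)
Your proposal is correct and follows essentially the same route as the paper: the same factorization $\widetilde{A}_{k,m}\widetilde{R}_{m,l}\cdot(\widetilde{A}_{k,m}-\widetilde{A}_{k,l})\cdot\widetilde{A}^{-1}_{k,l}$ estimated via Lemmas~\ref{lemma: tilde Am Rm from H-1toL2}, \ref{lemma: tilde A_m - Al in H-1}, and~\ref{lemma: tilde A_m inv in L2 and H1}, the same cancellation $(1+\mu\min\{k_l,k_m\})(1+\mu k_l)^{-1/2}(1+\mu k_m)^{-1/2}\le 1$, and the same Riemann-sum comparison against $\int_0^T\omega(s)s^{-3/2}\,ds$ using the monotonicity in~\eqref{eq: condition on omega}, with the $\ell^1$ estimate obtained by swapping the order of summation. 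The only cosmetic difference is that the paper records the intermediate pointwise bound~\eqref{eq:main lemma intermediate} before taking the maximum (resp.\ summing), whereas you take the maximum inside each summand first.
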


\begin{proof}
Using that 
$$
(\widetilde{Q}g_k)_m=\sum_{l=1}^{m-1}k_l\widetilde{A}_{k,m}\widetilde{R}_{m,l}(\widetilde{A}_{k,m}-\widetilde{A}_{k,l})\widetilde{A}^{-1}_{k,l}g_{k,l},
$$
we have
$$
\|(\widetilde{Q}g_k)_m\|_{L^2(\Om)}\le \sum_{l=1}^{m-1} k_l\|\widetilde{A}_{k,m}\widetilde{R}_{m,l}\|_{ H^{-1}\to L^2}\|\widetilde{A}_{k,m}-\widetilde{A}_{k,l}\|_{ H^1\to H^{-1}}\|(\widetilde{A}_{k,l})^{-1}\|_{ L^2\to H^{1}}\|g_{k,l}\|_{L^2(\Om)}.
$$
Combining estimates from Lemma \ref{lemma: tilde A_m inv in L2 and H1}, Lemma \ref{lemma: tilde A_m - Al in H-1} and Lemma \ref{lemma: tilde Am Rm from H-1toL2}, and using that
$$
\frac{(1+\mu\min\{ k_l,k_m\})}{(1+\mu k_l)^{\frac{1}{2}}(1+\mu k_m)^{\frac{1}{2}}}\le 1,
 $$
for any $m=1,2,\dots,M$ we have 
\begin{equation}\label{eq:main lemma intermediate}
\|(\widetilde{Q}g_k)_m\|_{L^2(\Om)}\le C\sum_{l=1}^{m-1} \bigg( \frac{k_l}{\sqrt{\mu}}\frac{\omega(t_m-t_{l-1})}{(t_m-t_{l-1})^{\frac{3}{2}}}+\sqrt{\mu}k_l\frac{|k_m-k_l|}{(t_m-t_{l-1})^{\frac{3}{2}}} \bigg)\|g_{k,l}\|_{L^2(\Om)}.
\end{equation}
From the condition  \eqref{eq: condition on omega} and properties of the Riemann sums, we obtain
$$
\sum_{l=1}^{m-1}k_l\frac{\omega(t_m-t_{l-1})}{(t_m-t_{l-1})^{\frac{3}{2}}}\le \int_0^{t_m}\frac{\omega(t_m-s)}{(t_m-s)^{\frac{3}{2}}}\ ds\le C,
$$
 and taking the maximum over $l$ of $\|g_{k,l}\|_{L^2(\Om)}$, we obtain the first estimate of the lemma.
 
 From \eqref{eq:main lemma intermediate} we also obtain
 \[
\sum_{m=1}^M k_m \|(\widetilde{Q}g_k)_m\|_{L^2(\Om)}\le C\sum_{m=1}^M k_m \sum_{l=1}^{m-1} \left(\frac{k_l}{\sqrt{\mu}}\frac{\omega(t_m-t_{l-1})}{(t_m-t_{l-1})^{\frac{3}{2}}}
+\frac{\sqrt{\mu} k_l |k_m-k_l|}{(t_m-t_{l-1})^{\frac{3}{2}}}\right) \ltwonorm{g_{k,l}}.
\]
Changing the order of summation we have
\[
\sum_{m=1}^M k_m \|(\widetilde{Q}g_k)_m\|_{L^2(\Om)}\le \sum_{l=1}^{M-1} k_l \ltwonorm{g_{k,l}} \sum_{m=l+1}^M \left(\frac{k_m}{\sqrt{\mu}}\frac{\omega(t_m-t_{l-1})}{(t_m-t_{l-1})^{\frac{3}{2}}}
+\frac{\sqrt{\mu} k_m |k_m-k_l|}{(t_m-t_{l-1})^{\frac{3}{2}}}\right).
\]
Again similar to the above, by~\eqref{eq: condition on omega} we have,
\[
\sum_{m=l+1}^M k_m\frac{\omega(t_m-t_{l-1})}{(t_m-t_{l-1})^{\frac{3}{2}}} \le \int_{t_{l-1}}^T \frac{w(s-t_{l-1})}{(s-t_{l-1})^{\frac{3}{2}}} \, ds \le C.
\]
Taking maximum over $l$ in the sum $\sum_{m=l+1}^M  \frac{k_m |k_m-k_l|}{(t_m-t_{l-1})^{\frac{3}{2}}}$ completes the proof. 
\end{proof}

\begin{proposition}\label{prop:Q<1}
There exists $\mu>0$ sufficiently large and $\delta_0>0$ such that for  $k - k_{\min} \le \delta_0$
the following estimates hold for all $g_k\in \Xk$,
\begin{equation}\label{eq:est:prop:Linfty}
\|(\widetilde{Q}g_k)_m\|_{L^2(\Om)}\le \frac{3}{4}\max_{1\le l \le m} \|g_{k,l}\|_{L^2(\Om)},\quad m=1,2,\dots,M,
\end{equation}
and
\begin{equation}\label{eq:est:prop:L1}
\sum_{m=1}^M k_m \|(\widetilde{Q}g_k)_m\|_{L^2(\Om)}\le \frac{3}{4} \sum_{l=1}^M k_l \|g_{k,l}\|_{L^2(\Om)}.
\end{equation}

\end{proposition}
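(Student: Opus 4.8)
The plan is to read off from Lemma~\ref{lemma: Id - Q} that the whole proposition reduces to a statement about the two purely geometric quantities
\[
S_m:=\sum_{l=1}^{m-1}\frac{k_l\,|k_m-k_l|}{(t_m-t_{l-1})^{\frac32}},
\qquad
\widehat{S}_l:=\sum_{m=l+1}^{M}\frac{k_m\,|k_m-k_l|}{(t_m-t_{l-1})^{\frac32}} .
\]
First I would freeze $\mu$ so large that $C_1/\sqrt{\mu}\le\tfrac14$. Since $\sqrt{\mu}$ is then a fixed constant, Lemma~\ref{lemma: Id - Q} shows that the proposition follows as soon as one exhibits $\delta_0>0$, depending only on $\mu$ and on the mesh constants $\kappa,\beta,c$ and on $T$ (but not on the individual mesh), such that $k-k_{\min}\le\delta_0$ forces $\sup_m S_m\le\frac1{2C_2\sqrt{\mu}}$ and $\sup_l\widehat{S}_l\le\frac1{2C_2\sqrt{\mu}}$: inserting these two inequalities into the two estimates of Lemma~\ref{lemma: Id - Q} gives precisely the constants $\tfrac14+\tfrac12=\tfrac34$ in \eqref{eq:est:prop:Linfty} and \eqref{eq:est:prop:L1}. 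Thus everything comes down to showing that $\sup_m S_m\to0$ and $\sup_l\widehat{S}_l\to0$ as $\delta:=k-k_{\min}\to0$, uniformly over all time meshes satisfying (i)--(iii).

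To prove this I would play off two competing bounds on each summand. Trivially $|k_m-k_l|\le\delta$. On the other hand, iterating (ii) gives $\kappa^{-|m-l|}k_m\le k_l\le\kappa^{|m-l|}k_m$, hence $|k_m-k_l|\le(\kappa^{|m-l|}-1)k_m$, while obviously $t_m-t_{l-1}=\sum_{j=l}^{m}k_j\ge k_l+k_m$ and $\ge (m-l+1)k_{\min}$. I would split $S_m$ at $|m-l|=J$, with $J$ fixed at the end. For the near part $|m-l|\le J$ I would use $|k_m-k_l|\le\min\{\delta,(\kappa^{|m-l|}-1)k_m\}\le\sqrt{\delta(\kappa^{|m-l|}-1)k_m}$ together with $t_m-t_{l-1}\ge\max\{k_l,k_m\}$; the key cancellation is that the inconvenient factor $k_m^{-1/2}$ is swallowed by the $k_m^{1/2}$ coming out of the geometric mean, so that the near part is at most a convergent geometric sum $\lesssim\sqrt{\delta}\,\kappa^{J/2}$. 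For the far part $|m-l|>J$ I would use $|k_m-k_l|\le\delta$ and the Riemann-sum comparison
\[
\sum_{l:\,m-l>J}\frac{k_l}{(t_m-t_{l-1})^{\frac32}}\ \le\ \int_0^{t_{m-J-1}}\!\frac{ds}{(t_m-s)^{\frac32}}\ \le\ \frac{2}{(t_m-t_{m-J-1})^{\frac12}}\ \le\ \frac{2}{\bigl((J+1)k_{\min}\bigr)^{\frac12}},
\]
which bounds the far part by $\lesssim\delta\bigl((J+1)k_{\min}\bigr)^{-1/2}$. Optimizing over $J$, and using (i) in the form $k_{\min}\ge c\,k^{\beta}$ (with (iii)) to keep $k_{\min}$ from collapsing relative to $k$, yields $S_m\le\eta(\delta)$ with $\eta$ independent of $m$ and of the mesh and $\eta(\delta)\to0$ as $\delta\to0$; one then chooses $\delta_0$ with $\eta(\delta_0)\le\frac1{2C_2\sqrt{\mu}}$. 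The transposed sum $\widehat{S}_l$ is handled the same way, now comparing $\sum_m k_m(t_m-t_{l-1})^{-3/2}$ with a tail integral of $(s-t_{l-1})^{-3/2}$ — exactly the device already used for the $\omega$-term in the proof of Lemma~\ref{lemma: Id - Q}.

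The step I expect to be the real obstacle is making the bound on $S_m$ uniform precisely for meshes that are at once fine and strongly graded, i.e. $k$ small while $k_{\min}\ll k$: there the cheap estimate $S_m\le\delta\sum_{l}k_l(t_m-t_{l-1})^{-3/2}\le 2\delta\,k_m^{-1/2}$ is useless, and one genuinely has to use both that the steps adjacent to $I_m$ cannot contribute the full $\delta$ — since $|k_m-k_l|=O\!\bigl((\kappa^{|m-l|}-1)k_m\bigr)$ there — and that the grading is limited by $k_{\min}\ge c\,k^{\beta}$, in order to defeat the residual factor $k_{\min}^{-1/2}$ in the far part after the optimal choice of $J$. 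Once the uniform smallness of $S_m$ and $\widehat{S}_l$ has been established, the two estimates of the proposition are immediate from Lemma~\ref{lemma: Id - Q}.
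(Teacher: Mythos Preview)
Your overall strategy — fix $\mu$ so that $C_1/\sqrt\mu$ is below a fixed threshold and then show that the geometric sums $S_m$ and $\widehat S_l$ can be made small once $k-k_{\min}$ is small — is exactly the paper's reduction via Lemma~\ref{lemma: Id - Q}. The difference lies entirely in how you bound $S_m$.

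Your near/far split does not close, and the obstacle you flag is real and is not rescued by~(i). After optimizing in $J$, the far part still carries a factor $\delta/\sqrt{k_{\min}}$ (up to logarithms), where $\delta=k-k_{\min}$. Under the mesh assumptions one only knows $k_{\min}\ge c\,k^\beta$ with an arbitrary $\beta>0$; combining this with $\delta\le k$ gives $\delta/\sqrt{k_{\min}}\le c^{-1/2}k^{1-\beta/2}$, which tends to $0$ as $k\to 0$ only when $\beta<2$. For $\beta\ge 2$ no choice of $J$ makes both parts uniformly small, so the argument as written does not prove the proposition under the stated hypotheses. The near-part geometric-mean idea is fine, but it cannot compensate the far part for strongly graded meshes.

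The paper avoids the split altogether by a one-line interpolation. Besides $|k_m-k_l|\le\delta$, one always has $|k_m-k_l|\le t_m-t_{l-1}$; writing $|k_m-k_l|=|k_m-k_l|^{1/2+\varepsilon}\,|k_m-k_l|^{1/2-\varepsilon}$ and applying these two bounds to the two factors gives, for any $0<\varepsilon<\tfrac12$,
\[
S_m\ \le\ (k-k_{\min})^{\frac12-\varepsilon}\sum_{l=1}^{m-1}\frac{k_l}{(t_m-t_{l-1})^{1-\varepsilon}}\ \le\ (k-k_{\min})^{\frac12-\varepsilon}\int_0^{t_{m-1}}\frac{ds}{(t_m-s)^{1-\varepsilon}}\ \le\ C_\varepsilon\,T^\varepsilon\,(k-k_{\min})^{\frac12-\varepsilon},
\]
and similarly for $\widehat S_l$. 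Taking, say, $\varepsilon=\tfrac14$ yields $S_m\le C_3\,(k-k_{\min})^{1/4}$ uniformly over all meshes, with no use of~(i) or~(ii). Choosing $\mu=4C_1^2$ and then $\delta_0=(4C_3)^{-4}$ finishes both estimates. In short, the missing idea is the elementary inequality $|k_m-k_l|\le t_m-t_{l-1}$, which converts the troublesome exponent $3/2$ into any exponent strictly less than $1$ and makes the Riemann sum convergent without ever appealing to $k_{\min}$.
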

\begin{proof}
Using the first estimate from Lemma \ref{lemma: Id - Q} and choosing $\mu=4C_1^2$ we obtain
\[
\|(\widetilde{Q}g_k)_m\|_{L^2(\Om)}\le \max_{1\le j\le m}\|g_{k,j}\|_{L^2(\Om)} \left(\frac{1}{2} + C_2 \sqrt{\mu}\sum_{l=1}^{m-1}\frac{k_l\abs{k_m-k_l}}{(t_m-t_{l-1})^{\frac{3}{2}}}\right).
\]
Using $\abs{k_m-k_l} \le t_m-t_{l-1}$ we get for some $0<\eps<1$
\[
\begin{aligned}
C_2 \sqrt{\mu}\sum_{l=1}^{m-1}\frac{k_l\abs{k_m-k_l}}{(t_m-t_{l-1})^{\frac{3}{2}}}&\le  2 C_1 C_2 \sum_{l=1}^{m-1}\frac{k_l\abs{k_m-k_l}^{\frac{1}{2}-\eps}}{(t_m-t_{l-1})^{1-\eps}}\\
& \le 2 C_1 C_2 (k - k_{\min})^{\frac{1}{2}-\eps} \sum_{l=1}^{m-1}\frac{k_l}{(t_m-t_{l-1})^{1-\eps}}.
\end{aligned}
\]
Using  the properties of the Riemann sums we can estimate
\[
\sum_{l=1}^{m-1}\frac{k_l}{(t_m-t_{l-1})^{1-\eps}} \le \int_0^{t_{m-1}} \frac{1}{t_m-s} \le C_\eps.
\]
Choosing for example $\eps=\frac{1}{4}$ we get with $C_3 =  2 C_1 C_2 C_\eps$
\[
\|(\widetilde{Q}g_k)_m\|_{L^2(\Om)}\le\max_{1\le j\le m}\|g_{k,j}\|_{L^2(\Om)} \left( \frac{1}{2} + C_3 (k - k_{\min})^{\frac{1}{4}}\right).
\]
The estimate~\eqref{eq:est:prop:Linfty} follows then with the choice $\delta_0 = \frac{1}{(4C_3)^4}$.
The estimate~\eqref{eq:est:prop:L1} follows from Lemma~\ref{lemma: Id - Q}  similarly.
\end{proof}
\begin{remark}
The condition $k - k_{\min} \le \delta_0$ trivially holds in the case of uniform time steps. For non-uniform time steps it is sufficient to assume $k \le \frac{1}{2}\delta_0$.
\end{remark}

The above proposition shows that under certain conditions, the operator $\operatorname{Id}-\widetilde{Q}$ is invertible with a bounded inverse with respect to both the $L^\infty(I;L^2(\Omega))$ and $L^1(I;L^2(\Omega))$ norms on $\Xk$. This is the central piece of our argument. In order to obtain a discrete maximal parabolic regularity, we will also require estimates for $\widetilde{L}$ and $\tilde D$, which we will show next.

\begin{lemma}\label{lemma: tilde L}
For the operator $\widetilde{L}$ defined in~\eqref{eq: defintion of tilde L} there exists a constant $C$ independent of $k$ such that for all $f_k\in X_k^0$ the following estimates hold:
\begin{equation}\label{eq:est_tildeL:Linfty}
\|(\widetilde{L}f_k)_m\|_{L^2(\Om)}\le C\lk\max_{1\le l\le m}\|f_{k,l}\|_{L^2(\Om)}.
\end{equation}
and
\begin{equation}\label{eq:est_tildeL:L1}
\sum_{m_1}^M k_m \|(\widetilde{L}f_k)_m\|_{L^2(\Om)}\le C\lk \sum_{l=1}^M k_l \|f_{k,l}\|_{L^2(\Om)}.
\end{equation}
\end{lemma}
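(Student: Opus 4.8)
The plan is to estimate each term $k_l\widetilde{A}_{k,m}\widetilde{R}_{m,l}f_{k,l}$ in the sum defining $(\widetilde{L}f_k)_m = \sum_{l=1}^m k_l\widetilde{A}_{k,m}\widetilde{R}_{m,l}f_{k,l}$ using the smoothing estimates already established. The key ingredient is the first estimate of Lemma \ref{lemma:Laplace2}, which gives $\|\widetilde{A}_{k,m}\widetilde{R}_{m,l}v\|_{L^2(\Om)}\le \frac{C}{t_m-t_{l-1}}\|v\|_{L^2(\Om)}$ for all $l\le m$. Applying this term by term yields
\[
\|(\widetilde{L}f_k)_m\|_{L^2(\Om)}\le C\sum_{l=1}^m \frac{k_l}{t_m-t_{l-1}}\,\|f_{k,l}\|_{L^2(\Om)}.
\]
The heart of the matter is then the purely arithmetic fact that the weights $\sum_{l=1}^m \frac{k_l}{t_m-t_{l-1}}$ are bounded by $C\lk$ (with the logarithm coming from the borderline behavior near $l=m$, where $t_m-t_{l-1}=k_m$ can be as small as $ck^\beta$), and correspondingly that $\sum_{m=l+1}^M \frac{k_m}{t_m-t_{l-1}}\le C\lk$ uniformly in $l$. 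These two sums, together with a Schur-test / changing-order-of-summation argument, deliver the $L^\infty$ and $L^1$ bounds respectively.

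For the first (row-sum) bound, I would split $\sum_{l=1}^{m-1}\frac{k_l}{t_m-t_{l-1}}$ as a Riemann sum comparable to $\int_0^{t_{m-1}}\frac{ds}{t_m-s}=\ln\frac{t_m}{t_m-t_{m-1}}=\ln\frac{t_m}{k_m}\le \ln\frac{T}{k_{\min}}\le C\lk$ (using assumption (i) on the time mesh, $k_{\min}\ge ck^\beta$, so that $\ln\frac{1}{k_{\min}}\le C\ln\frac1k$ up to constants and the $\beta$-factor), and then add the single term $l=m$, which contributes $\frac{k_m}{k_m}=1$. Pulling $\max_{1\le l\le m}\|f_{k,l}\|_{L^2(\Om)}$ out of the sum gives \eqref{eq:est_tildeL:Linfty}. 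For \eqref{eq:est_tildeL:L1}, multiply by $k_m$, sum over $m$, interchange the order of summation to get $\sum_{l=1}^M k_l\|f_{k,l}\|_{L^2(\Om)}\sum_{m=l}^M \frac{k_m}{t_m-t_{l-1}}$, and bound the inner sum again by a Riemann-sum comparison $\sum_{m=l+1}^M \frac{k_m}{t_m-t_{l-1}}\le \int_{t_{l-1}}^T\frac{ds}{s-t_{l-1}}$ — which is only logarithmically divergent and is cut off from below at scale $k_l$, giving $\ln\frac{T-t_{l-1}}{k_l}\le C\lk$ — plus the diagonal term $\frac{k_l}{k_l}=1$.

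The main obstacle is handling the near-diagonal terms $l$ close to $m$: the estimate from Lemma \ref{lemma:Laplace2} for $t_m-t_{l-1}$ small is only of size $1/k_m$, not decaying, so one cannot afford more than $O(\lk)$ such terms — but the mesh regularity assumption (ii), $\kappa^{-1}\le k_m/k_{m+1}\le\kappa$, combined with assumption (i), guarantees that the ratio $t_m/k_m$ (and $(T-t_{l-1})/k_l$) is at most polynomial in $1/k$, so its logarithm is $O(\lh)=O(\lk)$ up to an absolute constant absorbing $\beta$. I would make this precise with a short lemma-free computation inside the proof: $t_m\le T$ and $k_m\ge k_{\min}\ge ck^\beta$, hence $\ln(t_m/k_m)\le \ln(T/(ck^\beta))=\ln(T/c)+\beta\,\lh\le C\lk$. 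This is the only place where the mesh assumptions genuinely enter, and it is exactly the source of the logarithmic factor in the statement.
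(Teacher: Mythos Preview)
Your proposal is correct and follows essentially the same route as the paper: apply the first estimate of Lemma~\ref{lemma:Laplace2} term by term, bound $\sum_{l=1}^m k_l/(t_m-t_{l-1})$ by $1+\ln(t_m/k_m)\le C\lk$ via a Riemann-sum comparison (invoking assumption~(i) on the mesh), and obtain \eqref{eq:est_tildeL:L1} by interchanging the order of summation. A minor remark: your reference to assumption~(ii) is unnecessary here (only~(i) is used), and the notation $\lh$ should be $\ln(1/k)$ in this context.
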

\begin{proof}
From the definition of $\widetilde{L}$ and Lemma~\ref{lemma:Laplace2} we obtain
$$
\begin{aligned}
\|(\widetilde{L}f_k)_m\|_{L^2(\Om)}&\le \sum_{l=1}^m k_l\|\widetilde{A}_{k,m}\widetilde{R}_{m,l}\|_{L^2\to L^2}\|f_{k,l}\|_{L^2(\Om)}\\
&\le C\max_{1\le l\le m}\|f_{k,l}\|_{L^2(\Om)}\sum_{l=1}^m \frac{k_l}{t_m-t_{l-1}}\le C\lk \max_{1\le l\le m}\|f_{k,l}\|_{L^2(\Om)},
\end{aligned}
$$
where in the last step we used that 
$$
\sum_{l=1}^m \frac{k_l}{t_m-t_{l-1}}\le 1+\int_0^{t_{m-1}} \frac{dt}{t_m-t}=1+\ln\frac{t_m}{k_m}\le C\lk.
$$
This completes the proof of~\eqref{eq:est_tildeL:Linfty}. The estimate~\eqref{eq:est_tildeL:L1} can be shown similarly by changing the order of summation.
\end{proof}

\begin{lemma}\label{lemma:tildeD}
For the operator $\widetilde{D}$ defined in~\eqref{eq: defintion of tilde L} there exists a constant $C$ independent of $k$ such that for all $u_0 \in L^2(\Omega)$ the following estimate holds,
\begin{equation}\label{eq:est_tildeD:L1}
\sum_{m=1}^M k_m \|(\widetilde{D} u_0)_m\|_{L^2(\Om)}\le C\lk \ltwonorm{u_0}.
\end{equation}
If in addition $u_0 \in H^1_0(\Omega)$ with $A_{k,m} u_0 \in L^2(\Omega)$ for all $m=1,2,\dots,M$ then
\begin{equation}\label{eq:est_tildeD:Linfty}
\max_{1\le m\le M} \|(\widetilde{D} u_0)_m\|_{L^2(\Om)} \le C \mu \max_{1\le m\le M} \ltwonorm{A_{k,m} u_0}.
\end{equation}
\end{lemma}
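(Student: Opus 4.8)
The plan is to prove the two estimates separately, both by reducing to the smoothing bounds of Lemma~\ref{lemma:Laplace2} and to the algebraic identity $\widetilde{A}_{k,m}=(1+k_m\mu)A_{k,m}+\mu\Id$.

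For~\eqref{eq:est_tildeD:L1} I would start from $(\widetilde{D}u_0)_m=\widetilde{A}_{k,m}\widetilde{R}_{m,1}u_0$ and apply the first estimate of Lemma~\ref{lemma:Laplace2} with $l=1$, so that $t_{l-1}=t_0=0$, obtaining
\[
\|(\widetilde{D}u_0)_m\|_{L^2(\Om)}\le \frac{C}{t_m}\ltwonorm{u_0}.
\]
Multiplying by $k_m$ and summing over $m$ then reduces the claim to the bound $\sum_{m=1}^M k_m/t_m\le C\lk$. Here the $m=1$ term equals $1$, while for $m\ge2$ the elementary inequality $1-x\le-\ln x$ on $(0,1)$ gives $k_m/t_m=1-t_{m-1}/t_m\le\ln(t_m/t_{m-1})$, so the remaining sum telescopes to $\ln(T/t_1)=\ln(T/k_1)$; the time-step assumption $k_{\min}\ge ck^\beta$ together with $k\le\frac{1}{4}T$ turns $1+\ln(T/k_1)$ into $C\lk$, exactly as in the proof of Lemma~\ref{lemma: tilde L}.

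For~\eqref{eq:est_tildeD:Linfty} the key point is that $\widetilde{R}_{m,1}$ is a function of $\widetilde{A}_{k,m}$ and hence commutes with it, so that under the hypothesis $A_{k,m}u_0\in L^2(\Om)$ the identity
\[
(\widetilde{D}u_0)_m=\widetilde{R}_{m,1}\widetilde{A}_{k,m}u_0=\widetilde{R}_{m,1}\bigl((1+k_m\mu)A_{k,m}u_0+\mu u_0\bigr)
\]
holds in $L^2(\Om)$. Since the spectrum of $\widetilde{A}_{k,m}$ is real and positive (as in the proof of Lemma~\ref{lemma:Laplace2}), Parseval's identity gives $\|\widetilde{R}_{m,1}\|_{L^2\to L^2}\le\sup_{\lambda>0}\prod_{j=1}^m(1+k_j\lambda)^{-1}\le1$. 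It then remains to control $\ltwonorm{u_0}$ by $\ltwonorm{A_{k,m}u_0}$: testing the definition of $A_{k,m}$ against $u_0$ and using coercivity~\eqref{eq: coercivity form} yields $\alpha\|u_0\|_{H^1(\Om)}^2\le\langle A_{k,m}u_0,u_0\rangle\le\ltwonorm{A_{k,m}u_0}\ltwonorm{u_0}$, hence $\ltwonorm{u_0}\le C\ltwonorm{A_{k,m}u_0}$. Combining these facts with $1+k_m\mu\le1+\frac{1}{4}T\mu\le C\mu$, valid since $\mu\ge1$, gives $\|(\widetilde{D}u_0)_m\|_{L^2(\Om)}\le C\mu\ltwonorm{A_{k,m}u_0}$, and taking the maximum over $m$ finishes the proof.

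I do not anticipate a real obstacle. The only points needing care are the logarithmic bookkeeping $\sum_m k_m/t_m\le C\lk$, which uses mesh condition~(i) to absorb $\ln(T/k_1)$ into a multiple of $\lk$, and the justification that the commutation $\widetilde{A}_{k,m}\widetilde{R}_{m,1}u_0=\widetilde{R}_{m,1}\widetilde{A}_{k,m}u_0$ and the splitting of $\widetilde{A}_{k,m}u_0$ are legitimate identities in $L^2(\Om)$ — which is precisely what the hypothesis $A_{k,m}u_0\in L^2(\Om)$ provides, namely that $u_0$ lies in the $L^2$-domain of the self-adjoint operator $\widetilde{A}_{k,m}$.
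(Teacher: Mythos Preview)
Your proof is correct and follows essentially the same approach as the paper. The paper's argument is terse --- it writes only $\|(\widetilde{D} u_0)_m\|_{L^2(\Om)} \le \frac{C}{t_m}\ltwonorm{u_0}$ followed by the summation for~\eqref{eq:est_tildeD:L1}, and $\|(\widetilde{D} u_0)_m\|_{L^2(\Om)} \le C\ltwonorm{\widetilde{A}_{k,m} u_0} \le C\mu\ltwonorm{A_{k,m} u_0}$ for~\eqref{eq:est_tildeD:Linfty} --- whereas you spell out the logarithmic summation, the commutation $\widetilde{A}_{k,m}\widetilde{R}_{m,1}=\widetilde{R}_{m,1}\widetilde{A}_{k,m}$, the contraction $\|\widetilde{R}_{m,1}\|_{L^2\to L^2}\le 1$, and the coercivity-based bound $\ltwonorm{u_0}\le C\ltwonorm{A_{k,m}u_0}$; these are exactly the steps the paper's two-line proof is implicitly invoking.
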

\begin{proof}
There holds by Lemma~\ref{lemma:Laplace2}
\[
\|(\widetilde{D} u_0)_m\|_{L^2(\Om)} \le \frac{C}{t_m} \ltwonorm{u_0}.
\]
This results in
\[
\sum_{m=1}^M k_m \|(\widetilde{D} u_0)_m\|_{L^2(\Om)}\le C \ltwonorm{u_0} \sum_{m=1}^M \frac{k_m}{t_m} \le C\lk \ltwonorm{u_0}.
\]
To prove~\eqref{eq:est_tildeD:Linfty} we use the fact that
\[
\|(\widetilde{D} u_0)_m\|_{L^2(\Om)} \le C \ltwonorm{\widetilde{A}_{k,m} u_0} \le C \mu \ltwonorm{A_{k,m} u_0}.
\]
\end{proof}

\subsection{Semidiscrete in time maximal parabolic regularity}\label{subsec: discrete maximum time}
Combining the above  results we can finally establish the maximal parabolic regularity with respect to the $L^\infty(I;L^2(\Om))$ and the $L^1(I;L^2(\Om))$ norms in the following two theorems.
\begin{theorem}[Discrete maximal parabolic regularity for $p=\infty$]\label{thm: maximal parabolic time dG0}
Let $f \in L^\infty(I;L^2(\Omega))$, let $u_0 \in H^1_0(\Omega)$ with $A_{k,m} u_0 \in L^2(\Omega)$ for all $m=1,2,\dots,M$. Let moreover $u_k$ be the dG(0) semidiscrete solution to \eqref{eq: semidiscrete heat with RHS}. 
There exists $\mu>0$ sufficiently large and $\delta_0>0$ such that for  $k - k_{\min} \le \delta_0$
\begin{multline*}
\max_{1 \le m \le M}\left(\|A_{k,m} u_{k,m}\|_{L^2(\Om)}+\left\|\frac{[u_{k}]_{m-1}}{k_m} \right\|_{L^2(\Om)}\right)\\\le Ce^{\mu T}\left(\lk\|f\|_{L^\infty(I;L^2(\Om))}+\mu T \max_{1\le m\le M} \ltwonorm{A_{k,m} u_0}\right).
\end{multline*}

\end{theorem}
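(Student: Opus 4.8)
The plan is to pass to the transformed quantities of Section~\ref{sec: time discretization} and invert $\Id-\widetilde{Q}$. First I would fix $\mu>0$ and $\delta_0>0$ as provided by Proposition~\ref{prop:Q<1}, so that under the hypothesis $k-k_{\min}\le\delta_0$ the operator $\widetilde{Q}$ has norm at most $\frac{3}{4}$ on $\Xk$ equipped with the $L^\infty(I;L^2(\Om))$ norm. Then $\Id-\widetilde{Q}$ is invertible on that space with $\|(\Id-\widetilde{Q})^{-1}\|\le 4$, and the fixed point identity \eqref{eq: v=tilde Qw+tilde Lf} gives $\tilde{v}_k=(\Id-\widetilde{Q})^{-1}(\widetilde{L}\widetilde{f}_k+\widetilde{D}u_0)$, so that
\[
\max_{1\le m\le M}\ltwonorm{\tilde{v}_{k,m}}\le 4\max_{1\le m\le M}\ltwonorm{(\widetilde{L}\widetilde{f}_k)_m}+4\max_{1\le m\le M}\ltwonorm{(\widetilde{D}u_0)_m}.
\]

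For the two terms on the right I would use the estimates already established. Since $\widetilde{f}_{k,m}=\prod_{l=1}^{m-1}(1+\mu k_l)^{-1}f_{k,m}$ and $f_{k,m}$ is an average of $f$ over $I_m$, one has $\ltwonorm{\widetilde{f}_{k,m}}\le\ltwonorm{f_{k,m}}\le\|f\|_{L^\infty(I;L^2(\Om))}$, whence Lemma~\ref{lemma: tilde L} gives $\max_m\ltwonorm{(\widetilde{L}\widetilde{f}_k)_m}\le C\lk\|f\|_{L^\infty(I;L^2(\Om))}$; and estimate \eqref{eq:est_tildeD:Linfty} of Lemma~\ref{lemma:tildeD} --- the only place where the hypothesis $A_{k,m}u_0\in L^2(\Om)$ is used --- gives $\max_m\ltwonorm{(\widetilde{D}u_0)_m}\le C\mu\max_m\ltwonorm{A_{k,m}u_0}$. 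Thus $\max_m\ltwonorm{\tilde{v}_{k,m}}\le C\bigl(\lk\|f\|_{L^\infty(I;L^2(\Om))}+\mu\max_m\ltwonorm{A_{k,m}u_0}\bigr)$.

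It remains to transfer this to $A_{k,m}u_{k,m}$ and then to the jump term. From $u_{k,m}=\prod_{l=1}^m(1+\mu k_l)\,w_{k,m}$, $w_{k,m}=\widetilde{A}_{k,m}^{-1}\tilde{v}_{k,m}$ and the identity $(1+\mu k_m)A_{k,m}\widetilde{A}_{k,m}^{-1}=\Id-\mu\widetilde{A}_{k,m}^{-1}$ (a consequence of $\widetilde{A}_{k,m}=(1+\mu k_m)A_{k,m}+\mu\Id$), the first estimate of Lemma~\ref{lemma: tilde A_m inv in L2 and H1} yields $\|\Id-\mu\widetilde{A}_{k,m}^{-1}\|_{L^2(\Om)\to L^2(\Om)}\le 2$, and therefore
\[
\ltwonorm{A_{k,m}u_{k,m}}=\Bigl(\prod_{l=1}^{m-1}(1+\mu k_l)\Bigr)\ltwonorm{(\Id-\mu\widetilde{A}_{k,m}^{-1})\tilde{v}_{k,m}}\le 2e^{\mu t_{m-1}}\ltwonorm{\tilde{v}_{k,m}}\le 2e^{\mu T}\ltwonorm{\tilde{v}_{k,m}},
\]
where I used $1+x\le e^x$ and $t_{m-1}\le T$. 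Combined with the previous step, this gives an estimate of the form asserted in the theorem for $\ltwonorm{A_{k,m}u_{k,m}}$. For the jump term, rearranging the equations of the time-stepping scheme \eqref{eq: one step dG0 inhomogeneous} gives $[u_k]_{m-1}/k_m=f_{k,m}-A_{k,m}u_{k,m}$ for $2\le m\le M$ and $(u_{k,1}-u_0)/k_1=f_{k,1}-A_{k,1}u_{k,1}$ for $m=1$, hence $\bigl\|[u_k]_{m-1}/k_m\bigr\|_{L^2(\Om)}\le\|f\|_{L^\infty(I;L^2(\Om))}+\ltwonorm{A_{k,m}u_{k,m}}$, which is already controlled. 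Taking the maximum over $m$ completes the proof.

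The substance is entirely in Proposition~\ref{prop:Q<1}, i.e. the smallness of $\widetilde{Q}$ that makes the Neumann series converge, and that is already in hand; within the present argument the only delicate point is the back-transformation $w_k\mapsto u_k$, where one must check that the factor $e^{\mu T}$ together with the $\mu\ltwonorm{w_{k,m}}$ contributions stays bounded rather than accumulating over the time levels --- which is precisely what the resolvent-type bound of Lemma~\ref{lemma: tilde A_m inv in L2 and H1} delivers.
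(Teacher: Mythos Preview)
Your proof is correct and follows essentially the same route as the paper's: both use the transformed identity \eqref{eq: v=tilde Qw+tilde Lf} together with Proposition~\ref{prop:Q<1}, Lemma~\ref{lemma: tilde L}, and Lemma~\ref{lemma:tildeD}, then undo the transformation via Lemma~\ref{lemma: tilde A_m inv in L2 and H1} and read off the jump bound from \eqref{eq: one step dG0 inhomogeneous}. The only cosmetic differences are that you phrase the absorption step as inverting $\Id-\widetilde{Q}$ by a Neumann series (the paper simply moves the $\tfrac{3}{4}$-term to the left), and your back-transformation via the identity $(1+\mu k_m)A_{k,m}\widetilde{A}_{k,m}^{-1}=\Id-\mu\widetilde{A}_{k,m}^{-1}$ cancels one factor and yields $e^{\mu t_{m-1}}$, whereas the paper uses the triangle inequality on $\widetilde{A}_{k,m}=(1+k_m\mu)A_{k,m}+\mu\Id$ to get the equivalent bound $\|A_{k,m}u_{k,m}\|\le 2\|\widetilde{A}_{k,m}u_{k,m}\|$.
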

\begin{proof}
Recalling the definitions of $\tilde{v}_k$ and $w_{k}$,  namely
$$
\tilde{v}_{k,m}=\widetilde{A}_{k,m}w_{k,m},\quad\text{and}\quad u_{k,m}=\prod_{l=1}^m(1+\mu k_l)w_{k,m},
$$
 we have
\begin{equation}\label{eq:calling u_km}
 \widetilde{A}_{k,m} u_{k,m} =\prod_{l=1}^m(1+\mu k_l)\widetilde{A}_{k,m}w_{k,m} =\prod_{l=1}^m(1+\mu k_l) \tilde{v}_{k,m}.
\end{equation}
By \eqref{eq: v=tilde Qw+tilde Lf} we have
\[
\max_{1 \le m \le M} \ltwonorm{\tilde{v}_{k,m}} \le \max_{1 \le m \le M} \left(\ltwonorm{(\widetilde{Q}\tilde{v}_k)_m} + \ltwonorm{(\widetilde{L}\tilde{f}_k)_m} + \ltwonorm{(\widetilde{D}u_0)_m}\right).
\]
For the first term we use estimate~\eqref{eq:est:prop:Linfty} from Proposition \ref{prop:Q<1}, for the second one estimate~\eqref{eq:est_tildeL:Linfty} from Lemma~\ref{lemma: tilde L} and for the third one estimate~\eqref{eq:est_tildeD:Linfty} from Lemma~\ref{lemma:tildeD}. This results in
\begin{multline*}
\max_{1 \le m \le M} \ltwonorm{\tilde{v}_{k,m}} \le \frac{3}{4} \max_{1\le m \le M} \|\tilde{v}_{k,m}\|_{L^2(\Om)} + C\lk\max_{1\le m\le M}\|\tilde f_{k,m}\|_{L^2(\Om)}\\ + C \mu T\max_{1\le m\le M} \ltwonorm{A_{k,m} u_0}.
\end{multline*}
Absorbing the first term on the right-hand side we obtain
\[
\max_{1 \le m \le M} \|\tilde{v}_{k,m}\|_{L^2(\Om)}\le C\lk\|\widetilde{f}_k\|_{L^\infty(I;L^2(\Om))}+C \mu T \max_{1\le m\le M} \ltwonorm{A_{k,m} u_0}.
\]
Now using that 
$$
\|\widetilde{f}_k\|_{L^\infty(I;L^2(\Om))}\le C\|f\|_{L^\infty(I;L^2(\Om))},
$$
we obtain
$$
\max_{1 \le m \le M} \|\widetilde{A}_{k,m} u_{k,m}\|_{L^2(\Om)}\le Ce^{\mu T}\left(\lk \|f\|_{L^\infty(I;L^2(\Om))}+ \mu T \max_{1\le m\le M} \ltwonorm{A_{k,m} u_0}\right),
$$
where we used that $\prod_{l=1}^m(1+\mu k_l)\le e^{\mu t_m}$.
Since $\tilde{A}_{k,m}$ is invertible for each $m$, using Lemma \ref{lemma: tilde A_m inv in L2 and H1} we also have
$$
\|u_{k,m}\|_{L^2(\Om)}=\|\widetilde{A}^{-1}_{k,m}\widetilde{A}_{k,m} u_{k,m}\|_{L^2(\Om)}\le
\frac{1}{\mu}\|\widetilde{A}_{k,m} u_{k,m}\|_{L^2(\Om)} .
$$
Thus from \eqref{eq:calling u_km}, the definition of $\widetilde{A}_{k,m}$, namely 
$$
\widetilde{A}_{k,m}=(1+k_m\mu)A_{k,m}+\mu\operatorname{Id}
$$
and by the triangle inequality and the estimates above we obtain 
\[
\|A_{k,m}u_{k,m}\|_{L^2(\Om)}\le \mu \|u_{k,m}\|_{L^2(\Om)}+\|\widetilde{A}_{k,m}u_{k,m}\|_{L^2(\Om)}\le 2\|\widetilde{A}_{k,m}u_{k,m}\|_{L^2(\Om)} 
\]
and therefore
\[
\max_{1 \le m \le M} \|A_{k,m} u_{k,m}\|_{L^2(\Om)}\le Ce^{\mu T} \left(\lk \|f\|_{L^\infty(I;L^2(\Om))}+ \mu T\max_{1\le m\le M} \ltwonorm{A_{k,m} u_0}\right).
\]
 The estimate for the second term in the statement of the theorem follows from the observation that \eqref{eq: one step dG0 inhomogeneous} is just
$$
\frac{[u_{k}]_{m-1}}{k_m} =-A_{k,m} u_{k,m}+f_{k,m}.
$$
\end{proof}

Next we establish the maximal parabolic regularity in $L^1(I;L^2(\Om))$ norm. 
\begin{theorem}[Discrete maximal parabolic regularity for $p=1$]\label{thm: maximal parabolic time dG0 in L1}
Let $f \in L^1(I;L^2(\Omega))$ and $u_0 \in L^2(\Omega)$. Let moreover $u_k$ be the dG(0) semidiscrete solution to \eqref{eq: semidiscrete heat with RHS}. 
There exists $\mu>0$ sufficiently large and $\delta_0>0$ such that for $k - k_{\min} \le \delta_0$ there holds
$$
\sum_{m=1}^M \left(k_m\|A_{k,m} u_{k,m}\|_{L^2(\Om)}+\|{[u_{k}]_{m-1}} \|_{L^2(\Om)}\right)\le Ce^{\mu T}\lk\left(\|f\|_{L^1(I;L^2(\Om))}+\|u_0\|_{L^2(\Om)}\right).
$$
\end{theorem}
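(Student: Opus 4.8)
The plan is to follow the same scheme as in the proof of Theorem~\ref{thm: maximal parabolic time dG0}, but to measure everything in the $\ell^1$-in-time norm $\sum_m k_m\ltwonorm{\cdot}$ rather than the $\ell^\infty$-in-time norm, and to exploit that in the $L^1$ setting the initial datum enters only through its $L^2(\Om)$ norm. Fix $\mu>0$ and $\delta_0>0$ as in Proposition~\ref{prop:Q<1}, so that the bound $k-k_{\min}\le\delta_0$ guarantees the contraction estimate~\eqref{eq:est:prop:L1} for $\widetilde Q$. First I would pass to the transformed quantities $w_{k,m}=\prod_{l=1}^m(1+\mu k_l)^{-1}u_{k,m}$ and $\tilde v_{k,m}=\widetilde A_{k,m}w_{k,m}$, which by \eqref{eq: v=tilde Qw+tilde Lf} satisfy $\tilde v_k=\widetilde Q\tilde v_k+\widetilde L\widetilde f_k+\widetilde D u_0$. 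Applying the triangle inequality to $\sum_{m=1}^M k_m\ltwonorm{\tilde v_{k,m}}$ and invoking the three $L^1$-type estimates already available — estimate~\eqref{eq:est:prop:L1} from Proposition~\ref{prop:Q<1} for the $\widetilde Q$-term (contributing the factor $\tfrac34$), estimate~\eqref{eq:est_tildeL:L1} from Lemma~\ref{lemma: tilde L} for the $\widetilde L$-term, and estimate~\eqref{eq:est_tildeD:L1} from Lemma~\ref{lemma:tildeD} for the $\widetilde D$-term — I absorb $\tfrac34\sum_m k_m\ltwonorm{\tilde v_{k,m}}$ into the left-hand side to obtain
\[
\sum_{m=1}^M k_m\ltwonorm{\tilde v_{k,m}}\le C\lk\Bigl(\sum_{l=1}^M k_l\ltwonorm{\widetilde f_{k,l}}+\ltwonorm{u_0}\Bigr).
\]

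Next I would control the transformed data. Since $\widetilde f_{k,m}=\prod_{l=1}^{m-1}(1+\mu k_l)^{-1}f_{k,m}$ with $\mu,k_l>0$, one has $\ltwonorm{\widetilde f_{k,m}}\le\ltwonorm{f_{k,m}}$, and from the definition~\eqref{eq: fm} of $f_{k,m}$ as a mean value, $k_m\ltwonorm{f_{k,m}}\le\int_{I_m}\ltwonorm{f(t)}\,dt$, so $\sum_m k_m\ltwonorm{\widetilde f_{k,m}}\le\|f\|_{L^1(I;L^2(\Om))}$. To return to the original variables I would use the identity~\eqref{eq:calling u_km}, namely $\widetilde A_{k,m}u_{k,m}=\prod_{l=1}^m(1+\mu k_l)\tilde v_{k,m}$, together with $\prod_{l=1}^m(1+\mu k_l)\le e^{\mu t_m}\le e^{\mu T}$, which yields $\sum_m k_m\ltwonorm{\widetilde A_{k,m}u_{k,m}}\le e^{\mu T}\sum_m k_m\ltwonorm{\tilde v_{k,m}}$. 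Then, exactly as in the proof of Theorem~\ref{thm: maximal parabolic time dG0}, the splitting $\widetilde A_{k,m}=(1+k_m\mu)A_{k,m}+\mu\operatorname{Id}$ combined with Lemma~\ref{lemma: tilde A_m inv in L2 and H1} gives $\ltwonorm{A_{k,m}u_{k,m}}\le 2\ltwonorm{\widetilde A_{k,m}u_{k,m}}$, whence $\sum_m k_m\ltwonorm{A_{k,m}u_{k,m}}\le Ce^{\mu T}\lk\bigl(\|f\|_{L^1(I;L^2(\Om))}+\ltwonorm{u_0}\bigr)$.

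Finally, the jump term is handled via the identity implicit in~\eqref{eq: one step dG0 inhomogeneous}, $[u_k]_{m-1}=k_m\bigl(-A_{k,m}u_{k,m}+f_{k,m}\bigr)$, so that $\sum_m\ltwonorm{[u_k]_{m-1}}\le\sum_m k_m\ltwonorm{A_{k,m}u_{k,m}}+\sum_m k_m\ltwonorm{f_{k,m}}$, and the last sum is again $\le\|f\|_{L^1(I;L^2(\Om))}$; adding this to the previous bound finishes the proof. I do not expect a genuine obstacle here: the delicate ingredient, the contraction property of $\widetilde Q$ in the relevant norm, is already furnished by Proposition~\ref{prop:Q<1}, and what remains is the bookkeeping of the transformation. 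The one point worth stressing is that in the $L^1$ case the estimate~\eqref{eq:est_tildeD:L1} for $\widetilde D$ requires only $u_0\in L^2(\Om)$ — in contrast to the $p=\infty$ case, where~\eqref{eq:est_tildeD:Linfty} forced the hypothesis $A_{k,m}u_0\in L^2(\Om)$ — so no additional regularity of the initial datum enters the statement.
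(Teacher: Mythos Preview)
Your proposal is correct and follows essentially the same route as the paper's proof: the paper also applies the $\ell^1$ triangle inequality to $\tilde v_k=\widetilde Q\tilde v_k+\widetilde L\widetilde f_k+\widetilde D u_0$, invokes \eqref{eq:est:prop:L1}, \eqref{eq:est_tildeL:L1}, and \eqref{eq:est_tildeD:L1}, absorbs the $\tfrac34$-term, and then refers back to the bookkeeping of Theorem~\ref{thm: maximal parabolic time dG0} for the remaining steps. In fact you have spelled out in full the details the paper leaves implicit under ``the rest of the proof goes along the lines of the proof of the previous theorem.''
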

\begin{proof}
The proof is very similar to the proof of the previous theorem. By \eqref{eq: v=tilde Qw+tilde Lf} we have
\[
\sum_{m=1}^M k_m \ltwonorm{\tilde{v}_{k,m}} \le \sum_{m=1}^M k_m \left(\ltwonorm{(\widetilde{Q}\tilde{v}_k)_m} + \ltwonorm{(\widetilde{L}\tilde{f}_k)_m} + \ltwonorm{(\widetilde{D}u_0)_m}\right).
\]
For the first term we use estimate~\eqref{eq:est:prop:L1} from Proposition \ref{prop:Q<1}, for the second one estimate~\eqref{eq:est_tildeL:L1} from Lemma~\ref{lemma: tilde L} and for the third one estimate~\eqref{eq:est_tildeD:L1} from Lemma~\ref{lemma:tildeD}. This results in
\begin{multline*}
\sum_{m=1}^M k_m \ltwonorm{\tilde{v}_{k,m}} \le \frac{3}{4} \sum_{m=1}^M k_m \|\tilde{v}_{k,m}\|_{L^2(\Om)} + C\lk \sum_{m=1}^M k_m \|\tilde f_{k,m}\|_{L^2(\Om)}\\ + C \lk \ltwonorm{u_0}.
\end{multline*}
The rest of the proof goes along the lines of the proof of the previous theorem.
\end{proof}

\begin{corollary}\label{cor: discrete maximal}
For $u_0=0$ interpolating between the results of Theorem \ref{thm: maximal parabolic time dG0} and  Theorem \ref{thm: maximal parabolic time dG0 in L1} we obtain the discrete maximal parabolic regularity for $1\le p< \infty$, namely
\begin{equation*}
\left[\sum_{m=1}^M\left(\|A_{k,m} u_{k,m}\|^p_{L^p(I_m;L^2(\Om))}+k_m\left\|\frac{[u_{k}]_{m-1}}{k_m} \right\|^p_{L^2(\Om)}\right)\right]^{\frac{1}{p}}\le Ce^{\mu T}\lk\|f\|_{L^p(I;L^2(\Om))}.
\end{equation*}
\end{corollary}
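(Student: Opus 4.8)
The plan is to read the asserted estimate as a bound on a single linear operator and to obtain it by interpolating between the two endpoint theorems just established. Throughout, the time mesh is fixed and $\mu>0$, $\delta_0>0$ are chosen as in Theorems~\ref{thm: maximal parabolic time dG0} and~\ref{thm: maximal parabolic time dG0 in L1}, so that $k-k_{\min}\le\delta_0$. First I would note that, since $u_0=0$, the dG($0$) solution $u_k$ of~\eqref{eq: semidiscrete heat with RHS} depends linearly on $f$, and the one-step form~\eqref{eq: one step dG0 inhomogeneous} gives $A_{k,m}u_{k,m}=\frac{u_{k,m-1}-u_{k,m}}{k_m}+f_{k,m}\in L^2(\Om)$ for each $m$. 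Hence $Tf:=A_ku_k$, viewed as the piecewise constant in time $L^2(\Om)$-valued function with value $A_{k,m}u_{k,m}$ on $I_m$, is a well-defined linear operator, and the jump term likewise satisfies $\frac{[u_k]_{m-1}}{k_m}=-A_{k,m}u_{k,m}+f_{k,m}$.

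Next I would convert the weighted sums in the statement into genuine $L^p(I;L^2(\Om))$ norms. For a piecewise constant function $g$ with $g|_{I_m}=g_{k,m}$ one has $\norm{g}_{L^p(I_m;L^2(\Om))}^p=k_m\ltwonorm{g_{k,m}}^p$, so Minkowski's inequality in $\ell^p$ bounds the quantity on the left of the asserted estimate by $\norm{Tf}_{L^p(I;L^2(\Om))}$ plus the $L^p(I;L^2(\Om))$ norm of the piecewise constant function with values $\frac{[u_k]_{m-1}}{k_m}$. Inserting $\frac{[u_k]_{m-1}}{k_m}=-A_{k,m}u_{k,m}+f_{k,m}$ and the elementary bound $\left(\sum_m k_m\ltwonorm{f_{k,m}}^p\right)^{1/p}\le\norm{f}_{L^p(I;L^2(\Om))}$ (Jensen on each $I_m$) then reduces the whole claim to the single estimate $\norm{Tf}_{L^p(I;L^2(\Om))}\le Ce^{\mu T}\lk\,\norm{f}_{L^p(I;L^2(\Om))}$.

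To prove this I would use the two theorems as endpoints: Theorem~\ref{thm: maximal parabolic time dG0} with $u_0=0$ says $T$ maps $L^\infty(I;L^2(\Om))$ into itself with norm $\le Ce^{\mu T}\lk$, and Theorem~\ref{thm: maximal parabolic time dG0 in L1} with $u_0=0$ says $T$ maps $L^1(I;L^2(\Om))$ into itself with the same bound; importantly, the constant, $\mu$, and the admissibility condition $k-k_{\min}\le\delta_0$ are identical in both. The Riesz--Thorin interpolation theorem for operators acting on $L^p$ spaces of functions valued in the Hilbert space $L^2(\Om)$ (equivalently, complex interpolation, using $[L^1(I;L^2(\Om)),L^\infty(I;L^2(\Om))]_\theta=L^p(I;L^2(\Om))$) then yields $\norm{Tf}_{L^p(I;L^2(\Om))}\le Ce^{\mu T}\lk\,\norm{f}_{L^p(I;L^2(\Om))}$ for all $1\le p\le\infty$, with $1<p<\infty$ coming from interpolation and $p=1,\infty$ already from the theorems.

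I do not anticipate a genuine obstacle: all the substance lies in the two endpoint theorems and interpolation is a soft tool. The only care needed is bookkeeping --- verifying that all constants, $\mu$, and the condition $k-k_{\min}\le\delta_0$ are uniform so the interpolated bound retains the form $Ce^{\mu T}\lk$; invoking the Banach/Hilbert-space-valued Riesz--Thorin theorem rather than the scalar one; and the elementary identification of the weighted $\ell^p$ sums with $L^p(I;L^2(\Om))$ norms of piecewise constant functions together with the reduction of the jump term via $\frac{[u_k]_{m-1}}{k_m}=-A_{k,m}u_{k,m}+f_{k,m}$ and Jensen's inequality.
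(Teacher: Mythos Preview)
Your proposal is correct and follows exactly the approach indicated in the paper: the corollary is stated there without a separate proof, simply asserting that it follows by interpolation between Theorems~\ref{thm: maximal parabolic time dG0} and~\ref{thm: maximal parabolic time dG0 in L1}. You have carefully fleshed out the details the paper omits --- the identification of the linear operator, the reduction of the jump term via $\frac{[u_k]_{m-1}}{k_m}=-A_{k,m}u_{k,m}+f_{k,m}$, and the invocation of the Hilbert-space-valued Riesz--Thorin theorem --- all of which are routine and exactly what is intended.
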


\subsection{Fully discrete maximal parabolic regularity}\label{sec: fully discrete}
In this section, we consider the fully discrete approximation of the equation \eqref{eq: heat equation}. We will establish the corresponding results for fully discrete approximations. 

 Let $\Om$ be a polygonal/polyhedral domain and let $\mathcal{T}$  denote  an admissible triangulation of $\Om$, i.e., $\mathcal{T} = \{\tau\}$ is a conformal partition of $\Om$ into  simplices (line segments, triangles, tetrahedrons, and etc.) $\tau$ of diameter $h_\tau$. Let  $h=\max_{\tau} h_\tau$ and $V_h$ be the set of all functions in $H^1_0(\Om)$ that are polynomials of degree $r\geq 1$ on each $\tau$, i.e., $V_h$ is the usual space of continuous finite elements. We would like to point out that we do not make any assumptions on shape regularity or quasi-uniformity of the meshes.
To obtain the fully discrete approximation we consider the space-time finite element space
\begin{equation} \label{def: space_time}
\Xkh=\{v_{kh} :\ v_{kh}|_{I_m}\in \Ppol{0}(I_m;V_h), \ m=1,2,\dots,M\}.
\end{equation}
We define a fully discrete analog $u_{kh} \in \Xkh$ of $u_k$ introduced in  \eqref{eq: semidiscrete heat with RHS} by
\begin{equation}\label{eq: semi fully discrete heat with RHS}
B(u_{kh},\varphi_{kh})=(f,\varphi_{kh})_{\IOm}+(u_0,\varphi_{kh}^+)_\Om \quad \text{for all }\; \varphi_{kh}\in \Xkh.
\end{equation}
Moreover, we introduce  two operators $A_{h}(t) \colon V_h \to V_h$ defined by 
\begin{equation}\label{eq: definition of Ah}
(A_{h}(t)v_h,\chi)_\Om=\sum_{i,j=1}^d(a_{ij}(t,\cdot)\pa_i v_h,\pa_j \chi)_\Om,\quad \forall \chi\in V_h
\end{equation}
and the orthogonal $L^2$ projection $P_h\colon V_h \to V_h$ defined by
$$
(P_{h}v_h,\chi)_\Om=( v_h, \chi)_\Om,\quad \forall \chi\in V_h.
$$ 
Similarly to $A_{k,m}$ in \eqref{eq: Am} we also define $A_{kh,m}\colon\Xkh\to \Xkh$
\begin{equation}\label{eq: Akhm}
A_{kh,m}=\frac{1}{k_m}\int_{I_m}A_h(t)dt,\quad  m=1,2,\dots,M.
\end{equation}

With the help of the above operators, the fully discrete approximation $u_{kh}\in \Xkh$ defined in \eqref{eq: semi fully discrete heat with RHS} satisfies 
\begin{equation}\label{eq: one step dG0 inhomogeneous fully}
\begin{aligned}
u_{kh,1}+k_1A_{kh,1}u_{kh,1}&=P_hu_0+k_1P_hf_{k,1},\\
u_{kh,m}+k_mA_{kh,m} u_{kh,m} &= u_{kh,m-1}+k_mP_hf_{k,m},\quad m=2,3,\dots,M,
\end{aligned}
\end{equation}  
where $f_{k,m}$ is defined in \eqref{eq: fm}.
Hence the same formulas for $u_k$, namely \eqref{eq: expression for uk_m}-\eqref{eq: v=Qv+Lf} also  hold for $u_{kh}$  with the difference that $A_k$ is replaced by $A_{kh}$ and $f_k$ by $P_hf_k$. The analysis from section \ref{sec: time discretization} of the paper translates almost immediately to the fully discrete setting since all arguments are  energy based arguments and  $\|P_h\|_{L^2(\Om)\to L^2(\Om)}\le 1$ on any mesh.  
Thus we directly establish the following results. 
\begin{theorem}[Discrete maximal parabolic regularity for $p=\infty$]\label{thm: maximal parabolic fully discrete}
Let the conditions of Theorem~\ref{thm: maximal parabolic time dG0} be fulfilled and let $u_{kh}$ be fully discrete solution to \eqref{eq: heat equation} defined by \eqref{eq: semi fully discrete heat with RHS} on any conformal triangulation of $\Om$.
Then there exists a constant $C$ independent of $k$ and $h$ such that the following estimate holds:
\begin{multline*}
\max_{1 \le m \le M}\left(\|A_{kh,m} u_{kh,m}\|_{L^2(\Om)}+\left\|\frac{[u_{kh}]_{m-1}}{k_m} \right\|_{L^2(\Om)}\right)\\\le Ce^{\mu T}\left(\lk\|f\|_{L^\infty(I;L^2(\Om))}+\mu T \max_{1\le m\le M} \ltwonorm{A_{kh,m} P_h u_0}\right).
\end{multline*}
\end{theorem}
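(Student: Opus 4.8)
The plan is to mirror the proof of Theorem~\ref{thm: maximal parabolic time dG0} verbatim, exploiting the fact that every ingredient in Section~\ref{sec: time discretization} used only energy estimates and the bound $\|P_h\|_{L^2(\Om)\to L^2(\Om)}\le 1$. First I would observe that the fully discrete scheme \eqref{eq: one step dG0 inhomogeneous fully} has exactly the same algebraic structure as \eqref{eq: one step dG0 inhomogeneous} with $A_{k,m}$ replaced by $A_{kh,m}$, $f_{k,m}$ by $P_h f_{k,m}$, and $u_0$ by $P_h u_0$; in particular the representation \eqref{eq: expresion for uk_m}--\eqref{eq: v=Qv+Lf} carries over, yielding $v_{kh}=Q_h v_{kh}+L_h P_h f_k+D_h P_h u_0$ with $v_{kh,l}=A_{kh,l}u_{kh,l}$. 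Then I would introduce the shifted quantities $w_{kh,m}=\prod_{l=1}^m(1+\mu k_l)^{-1}u_{kh,m}$ and the shifted operator $\widetilde{A}_{kh,m}=(1+k_m\mu)A_{kh,m}+\mu\operatorname{Id}$, obtaining $\tilde v_{kh}=\widetilde{Q}_h\tilde v_{kh}+\widetilde{L}_h\widetilde f_k+\widetilde{D}_h P_h u_0$ exactly as in \eqref{eq: v=tilde Qw+tilde Lf}.

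Next I would check that Lemmas~\ref{lemma: tilde resolvent}--\ref{lemma:tildeD} all hold with $A_{kh,m}$ in place of $A_{k,m}$: the key point is that $A_{kh,m}$ is still self-adjoint and positive on $V_h$ with the same coercivity constant $\Cell$ in the $H^1$ seminorm (since \eqref{eq: definition of Ah} uses the same coefficients and $V_h\subset H^1_0(\Om)$), and the time-regularity bound \eqref{eq: bounded form with omega} of the bilinear form is inherited on $V_h$, so Lemma~\ref{lemma: tilde A_m - Al in H-1} (with $H^{\pm1}(\Om)$ norms replaced by $H^1$-seminorm and its $V_h$-dual, or simply kept in the $H^1$ norm since $V_h\subset H^1_0$) goes through. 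Consequently Lemma~\ref{lemma: Id - Q} and Proposition~\ref{prop:Q<1} hold for $\widetilde{Q}_h$, so $\operatorname{Id}-\widetilde{Q}_h$ is invertible on $\Xkh$ with $L^\infty(I;L^2(\Om))$-norm of the inverse bounded by $4$, and Lemmas~\ref{lemma: tilde L} and \ref{lemma:tildeD} bound $\widetilde{L}_h$ and $\widetilde{D}_h$ with the same logarithmic constant. Here I would use $\|P_h f_{k,m}\|_{L^2(\Om)}\le\|f_{k,m}\|_{L^2(\Om)}$ and note that $\widetilde{A}_{kh,m}(P_hu_0)$ is controlled by $\mu\|A_{kh,m}P_hu_0\|_{L^2(\Om)}$ exactly as in the proof of \eqref{eq:est_tildeD:Linfty}.

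With these in hand, the estimate follows line by line from the proof of Theorem~\ref{thm: maximal parabolic time dG0}: from $\tilde v_{kh}=\widetilde{Q}_h\tilde v_{kh}+\widetilde{L}_h\widetilde f_k+\widetilde{D}_h P_h u_0$ one takes $\max_m\|\cdot\|_{L^2(\Om)}$, applies \eqref{eq:est:prop:Linfty}, \eqref{eq:est_tildeL:Linfty}, \eqref{eq:est_tildeD:Linfty}, absorbs the $\tfrac34$-term, uses $\|\widetilde f_k\|_{L^\infty(I;L^2(\Om))}\le C\|f\|_{L^\infty(I;L^2(\Om))}$, multiplies back by $\prod_{l=1}^m(1+\mu k_l)\le e^{\mu t_m}\le e^{\mu T}$, converts $\widetilde{A}_{kh,m}u_{kh,m}$ to $A_{kh,m}u_{kh,m}$ via $\|A_{kh,m}u_{kh,m}\|_{L^2(\Om)}\le 2\|\widetilde{A}_{kh,m}u_{kh,m}\|_{L^2(\Om)}$ (using Lemma~\ref{lemma: tilde A_m inv in L2 and H1} applied to $\widetilde A_{kh,m}$), and finally reads off the jump term from $\tfrac{[u_{kh}]_{m-1}}{k_m}=-A_{kh,m}u_{kh,m}+P_h f_{k,m}$. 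I do not expect a genuine obstacle here; the only point requiring a little care is confirming that the spectral/resolvent arguments in Lemmas~\ref{lemma: tilde resolvent} and \ref{lemma:Laplace2} are valid on the finite-dimensional space $V_h$ (where they are in fact easier, the spectrum being finite and real), and that the $H^{-1}$-type duality in Lemmas~\ref{lemma: tilde A_m - Al in H-1} and \ref{lemma: tilde Am Rm from H-1toL2} is interpreted consistently relative to $V_h$; this is precisely the remark made before the theorem that ``the analysis translates almost immediately,'' so the proof can be stated in one short paragraph invoking Section~\ref{sec: time discretization}.
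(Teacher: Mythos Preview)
Your proposal is correct and is precisely the paper's approach: the paper does not give a separate proof but simply remarks before the theorem that the analysis of Section~\ref{sec: time discretization} ``translates almost immediately to the fully discrete setting since all arguments are energy based arguments and $\|P_h\|_{L^2(\Om)\to L^2(\Om)}\le 1$,'' and then states the result. Your detailed checklist of which lemmas carry over, and the minor care points about $V_h$-spectra and the $H^{-1}$-duality interpreted relative to $V_h$, are exactly the verifications tacitly assumed by that remark.
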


The corresponding result for the $L^1(I;L^2(\Omega))$ norm is formulated in the following theorem.
\begin{theorem}[Discrete maximal parabolic regularity for $p=1$]\label{thm: maximal parabolic fully in L1}
Under the conditions of Theorem \ref{thm: maximal parabolic fully discrete}
there exists a constant $C$ independent of $k$ and $h$ such that
$$
\sum_{m=1}^M \left(k_m\|A_{kh,m} u_{kh,m}\|_{L^2(\Om)}+\|{[u_{kh}]_{m-1}} \|_{L^2(\Om)}\right)\le Ce^{\mu T}\lk\left(\|f\|_{L^1(I;L^2(\Om))}+\|u_0\|_{L^2(\Om)}\right).
$$
\end{theorem}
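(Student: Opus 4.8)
The plan is to mirror the proof of Theorem~\ref{thm: maximal parabolic fully discrete} (the $p=\infty$ case) exactly, replacing the max norm over time levels by the weighted $\ell^1$ sum $\sum_{m=1}^M k_m\|\cdot\|_{L^2(\Om)}$ throughout. As noted in the paragraph preceding Theorem~\ref{thm: maximal parabolic fully discrete}, everything in Section~\ref{sec: time discretization} was built using only energy arguments and the bound $\|P_h\|_{L^2(\Om)\to L^2(\Om)}\le 1$, so the fully discrete operators $\widetilde{A}_{kh,m}$, $\widetilde{R}_{kh,m,l}$, $\widetilde{Q}$, $\widetilde{L}$, $\widetilde{D}$ satisfy verbatim the analogues of Lemmas~\ref{lemma: tilde resolvent}--\ref{lemma:tildeD} and Proposition~\ref{prop:Q<1}, in particular the $L^1$-type bounds \eqref{eq:est:prop:L1}, \eqref{eq:est_tildeL:L1}, \eqref{eq:est_tildeD:L1}, with constants independent of both $k$ and $h$.

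First I would introduce the transformed fully discrete quantities: pick $\mu>0$ as in Proposition~\ref{prop:Q<1}, set $w_{kh,m}=\prod_{l=1}^m(1+\mu k_l)^{-1}u_{kh,m}$ and $\tilde v_{kh,m}=\widetilde{A}_{kh,m}w_{kh,m}$, so that the fully discrete analogue of \eqref{eq: v=tilde Qw+tilde Lf} reads $\tilde v_{kh}=\widetilde{Q}\tilde v_{kh}+\widetilde{L}\widetilde{f}_k+\widetilde{D}(P_hu_0)$, where here $\widetilde{f}_k$ carries the extra $P_h$ applied to $f_k$ but this is harmless since $P_h$ is an $L^2(\Om)$ contraction. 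Then apply $\sum_m k_m\|\cdot\|_{L^2(\Om)}$ to both sides, bound the three terms on the right by \eqref{eq:est:prop:L1}, \eqref{eq:est_tildeL:L1}, \eqref{eq:est_tildeD:L1} respectively, obtaining
$$
\sum_{m=1}^M k_m\ltwonorm{\tilde v_{kh,m}}\le \tfrac34\sum_{m=1}^M k_m\ltwonorm{\tilde v_{kh,m}}+C\lk\sum_{m=1}^M k_m\ltwonorm{P_hf_{k,m}}+C\lk\ltwonorm{P_hu_0}.
$$
Absorbing the first term on the right, using $\ltwonorm{P_hf_{k,m}}\le\ltwonorm{f_{k,m}}$ and $\ltwonorm{P_hu_0}\le\ltwonorm{u_0}$, and noting that $\sum_m k_m\ltwonorm{f_{k,m}}\le\|f\|_{L^1(I;L^2(\Om))}$ by Jensen's inequality applied to the average \eqref{eq: fm}, we get $\sum_m k_m\ltwonorm{\tilde v_{kh,m}}\le C\lk(\|f\|_{L^1(I;L^2(\Om))}+\ltwonorm{u_0})$.

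It remains to transfer this back to $A_{kh,m}u_{kh,m}$. Since $u_{kh,m}=\prod_{l=1}^m(1+\mu k_l)w_{kh,m}$ and $\prod_{l=1}^m(1+\mu k_l)\le e^{\mu t_m}\le e^{\mu T}$, we have $\ltwonorm{\widetilde{A}_{kh,m}u_{kh,m}}=\prod_{l=1}^m(1+\mu k_l)\ltwonorm{\tilde v_{kh,m}}\le e^{\mu T}\ltwonorm{\tilde v_{kh,m}}$; summing against $k_m$ gives the bound for $\sum_m k_m\ltwonorm{\widetilde{A}_{kh,m}u_{kh,m}}$. Then, exactly as in the proof of Theorem~\ref{thm: maximal parabolic time dG0}, the identity $\widetilde{A}_{kh,m}=(1+k_m\mu)A_{kh,m}+\mu\operatorname{Id}$ together with the fully discrete analogue of Lemma~\ref{lemma: tilde A_m inv in L2 and H1} (which gives $\ltwonorm{u_{kh,m}}\le\mu^{-1}\ltwonorm{\widetilde{A}_{kh,m}u_{kh,m}}$) yields $\ltwonorm{A_{kh,m}u_{kh,m}}\le 2\ltwonorm{\widetilde{A}_{kh,m}u_{kh,m}}$, hence the stated bound on $\sum_m k_m\ltwonorm{A_{kh,m}u_{kh,m}}$. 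Finally, the fully discrete one-step relation \eqref{eq: one step dG0 inhomogeneous fully} rearranges to $[u_{kh}]_{m-1}/k_m=-A_{kh,m}u_{kh,m}+P_hf_{k,m}$, so $\ltwonorm{[u_{kh}]_{m-1}}\le k_m\ltwonorm{A_{kh,m}u_{kh,m}}+k_m\ltwonorm{f_{k,m}}$; summing and using the already-established bounds (and absorbing $\lk\ge 1$) finishes the proof. The only point requiring a word of care is whether the constant $\delta_0$ and the choice of $\mu$ from Proposition~\ref{prop:Q<1} remain $h$-independent — they do, since that proposition rests on Lemma~\ref{lemma: Id - Q}, whose constants $C_1,C_2$ came from the resolvent and duality estimates that hold uniformly on any admissible mesh; so no genuine new obstacle arises beyond bookkeeping the $P_h$'s.
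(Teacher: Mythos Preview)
Your proposal is correct and follows exactly the route the paper intends: as the paper remarks before Theorem~\ref{thm: maximal parabolic fully discrete}, the fully discrete results are obtained by rerunning the semidiscrete arguments with $A_{kh,m}$ and $P_h f_k$ in place of $A_{k,m}$ and $f_k$, and the $p=1$ case is then proved just like Theorem~\ref{thm: maximal parabolic time dG0 in L1} by invoking the $L^1$-versions \eqref{eq:est:prop:L1}, \eqref{eq:est_tildeL:L1}, \eqref{eq:est_tildeD:L1} and finishing as in Theorem~\ref{thm: maximal parabolic time dG0}. Your write-up is in fact more explicit than the paper's own treatment, and your closing remark that the constants from Proposition~\ref{prop:Q<1} are $h$-independent is precisely the observation the paper summarizes by ``all arguments are energy based arguments.''
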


\begin{corollary}\label{cor: discrete maximal fully}
For $u_0=0$ interpolating between Theorem \ref{thm: maximal parabolic fully discrete} and  Theorem \ref{thm: maximal parabolic fully in L1}  we  obtain discrete maximal parabolic regularity for $1\le p< \infty$
\begin{equation*}
\left[\sum_{m=1}^M\left(\|A_{kh,m} u_{kh,m}\|^p_{L^p(I_m;L^2(\Om))}+k_m\left\|\frac{[u_{kh}]_{m-1}}{k_m} \right\|^p_{L^2(\Om)}\right)\right]^{\frac{1}{p}}\le Ce^{\mu T}\lk\|f\|_{L^p(I;L^2(\Om))}.
\end{equation*}
\end{corollary}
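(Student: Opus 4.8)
The statement to prove is Corollary~\ref{cor: discrete maximal fully}, which extends the fully discrete maximal parabolic regularity from the endpoint cases $p=\infty$ (Theorem~\ref{thm: maximal parabolic fully discrete}) and $p=1$ (Theorem~\ref{thm: maximal parabolic fully in L1}) to the full range $1\le p<\infty$ by interpolation, in the case $u_0=0$.

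The plan is to recognize this as a discrete Riesz--Thorin / Marcinkiewicz interpolation argument applied to a linear operator on sequence-valued $L^p$ spaces. First I would set $u_0=0$ and observe that, by~\eqref{eq: one step dG0 inhomogeneous fully}, the map $f\mapsto (A_{kh,m}u_{kh,m})_{m=1}^M$ is linear in $f$ (equivalently, via $v_k=Qv_k+Lf_k+Du_0$ specialized to the fully discrete setting, $f\mapsto (\mathrm{Id}-\widetilde Q)^{-1}\widetilde L\widetilde f_k$ is linear). Then define the target space as the weighted sequence space with norm $\left[\sum_m k_m\|w_m\|_{L^2(\Om)}^p\right]^{1/p}$, i.e. $\ell^p$ with weights $k_m$ tensored with $L^2(\Om)$; since $u_{kh,m}$ is constant in time on $I_m$, this weighted $\ell^p$ norm equals the $L^p(I;L^2(\Om))$ norm of the corresponding piecewise-constant function. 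Theorem~\ref{thm: maximal parabolic fully discrete} with $u_0=0$ gives the endpoint bound $Ce^{\mu T}\lk\|f\|_{L^\infty(I;L^2(\Om))}$ for the $\ell^\infty$-weighted target norm, and Theorem~\ref{thm: maximal parabolic fully in L1} with $u_0=0$ gives the bound $Ce^{\mu T}\lk\|f\|_{L^1(I;L^2(\Om))}$ for the $\ell^1$-weighted target norm. Applying the Riesz--Thorin interpolation theorem (for Bochner/vector-valued $L^p$ spaces over the measure spaces $(I,dt)$ and the discrete weighted measure $\sum_m k_m\delta_m$ on $\{1,\dots,M\}$, each with values in the Hilbert space $L^2(\Om)$) to the linear operator $f\mapsto (A_{kh,m}u_{kh,m})_m$ yields, for $1/p = (1-\theta)/1 + \theta/\infty = 1-\theta$, the bound $Ce^{\mu T}\lk\|f\|_{L^p(I;L^2(\Om))}$ on the $\ell^p$-weighted target norm, which is exactly the first term in the corollary.

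The second term, $k_m\|[u_{kh}]_{m-1}/k_m\|_{L^2(\Om)}^p$ summed over $m$, is then handled by the identity $[u_{kh}]_{m-1}/k_m = -A_{kh,m}u_{kh,m} + P_h f_{k,m}$ coming from~\eqref{eq: one step dG0 inhomogeneous fully} (with $u_0=0$ so $P_h u_0=0$; for $m=1$ one has $u_{kh,1}/k_1 = -A_{kh,1}u_{kh,1}+P_hf_{k,1}$, consistent with the convention $u_{kh,0}=0$). By the triangle inequality in the weighted $\ell^p$ space, $\left[\sum_m k_m\|[u_{kh}]_{m-1}/k_m\|_{L^2}^p\right]^{1/p}$ is bounded by the already-estimated first term plus $\left[\sum_m k_m\|P_hf_{k,m}\|_{L^2}^p\right]^{1/p}$; using $\|P_h\|_{L^2\to L^2}\le 1$, Jensen's inequality applied to the averages $f_{k,m}=k_m^{-1}\int_{I_m}f\,dt$, and summing, this last term is bounded by $\|f\|_{L^p(I;L^2(\Om))}\le C\lk\|f\|_{L^p(I;L^2(\Om))}$. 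Combining gives the stated estimate with constant $Ce^{\mu T}\lk$, absorbing the absolute constants.

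I do not expect any serious obstacle here; the corollary is an immediate consequence of the two endpoint theorems. The only points requiring minor care are: (i) making precise that the correct setting for Riesz--Thorin is vector-valued ($L^2(\Om)$-valued) $L^p$ over a discrete weighted measure, which is standard; (ii) matching conventions for the jump term at $m=1$; and (iii) noting that the $p=\infty$ case is excluded from the corollary precisely because interpolation between $L^1$ and $L^\infty$ only reaches $1\le p<\infty$ (the $p=\infty$ bound being Theorem~\ref{thm: maximal parabolic fully discrete} itself, with the additional regularity hypothesis on $u_0$ that is vacuous when $u_0=0$). The statement for $p=\infty$ with $u_0=0$ is already contained in Theorem~\ref{thm: maximal parabolic fully discrete}, so nothing further is needed there.
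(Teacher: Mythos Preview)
Your proposal is correct and matches the paper's intent: the paper gives no separate proof for this corollary, simply declaring it as obtained by ``interpolating between'' the $p=\infty$ and $p=1$ theorems, and you have spelled out exactly that Riesz--Thorin argument together with the standard identity for the jump term. There is nothing to compare; you have filled in the details the paper omits.
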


\section{Applications to error estimates}\label{sec: applications}

In this section we illustrate how the discrete maximal parabolic results from the previous section can be applied to error estimates.  For the rest of the section we assume that $\Om$ is convex and in addition the following assumption holds.
\begin{assumption}\label{assumption}
$$
a_{ij}(t,\cdot)\in W^{1,\infty}(\Om)\quad \text{for all $t\in \bar{I}$}, 
$$
and
$$
L:=\max_{1\le i,j\le d}\sup_{t\in \bar{I}}\|a_{ij}(t)\|_{W^{1,\infty}}<\infty.
$$
\end{assumption}

\subsection{Time semidiscrete  error estimates}\label{subsec: time error}
Using the convexity of $\Om$ and Assumption \ref{assumption} we establish the following preliminary result.
\begin{lemma}\label{lem: intermediate semidiscrete}
There exists a constant $C$ independent of  $k$ such that
$$
\sup_{t\in I_m}\|A(t)A^{-1}_{k,m}\|_{L^2\to L^2}\le C, \quad m=1,2,\dots,M.
$$
\end{lemma}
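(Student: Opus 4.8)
The plan is to exploit the fact that $A_{k,m}$ is itself a uniformly elliptic second-order operator in divergence form with $W^{1,\infty}$ coefficients, and then to combine $H^2$ elliptic regularity on the convex domain $\Om$ for $A_{k,m}$ with the $L^2$-mapping bound for $A(t)$ acting on $H^2$.

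First I would record the structural properties of the averaged operator. Its coefficients $\bar a^{(m)}_{ij}(x):=\frac{1}{k_m}\int_{I_m}a_{ij}(t,x)\,dt$ inherit, with constants independent of $m$ and $k$, both the uniform ellipticity \eqref{eq: uniform ellipticity} (being an average of nonnegative quadratic forms, $\sum_{i,j}\bar a^{(m)}_{ij}(x)\xi_i\xi_j\ge\alpha|\xi|^2$) and, by Assumption~\ref{assumption}, the bound $\|\bar a^{(m)}_{ij}\|_{W^{1,\infty}(\Om)}\le L$ (differentiation in $x$ commutes with the average in $t$ for coefficients that are $W^{1,\infty}$ in $x$ uniformly and continuous in $t$, and one takes the supremum under the integral). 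Moreover $\bar a^{(m)}_{ij}=\bar a^{(m)}_{ji}$, so $A_{k,m}$ is a symmetric, uniformly elliptic operator with exactly the same structural constants $\alpha$ and $L$ as the family $\{A(t)\}$.

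Next I would invoke full $H^2$ regularity on the convex domain: there is a constant $C_{\mathrm{reg}}=C_{\mathrm{reg}}(\Om,\alpha,L)$, independent of $m$ and $k$, such that $A_{k,m}\colon H^2(\Om)\cap H^1_0(\Om)\to L^2(\Om)$ is an isomorphism with
\[
\|v\|_{H^2(\Om)}\le C_{\mathrm{reg}}\,\|A_{k,m}v\|_{L^2(\Om)},\qquad v\in H^2(\Om)\cap H^1_0(\Om)
\]
(the classical Kadlec/Grisvard result for divergence-form operators with Lipschitz coefficients on convex domains; alternatively one reduces to the constant-coefficient case on convex polyhedra by a standard localization and coefficient-freezing argument). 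On the other hand, for any $t\in\bar I$ and $v\in H^2(\Om)$, expanding $A(t)v=-\sum_{i,j}\partial_j(a_{ij}(t)\partial_i v)$ by the product rule and using $\|a_{ij}(t)\|_{W^{1,\infty}(\Om)}\le L$ gives
\[
\|A(t)v\|_{L^2(\Om)}\le C(d)(1+L)\,\|v\|_{H^2(\Om)}.
\]

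Finally I would compose the two bounds: given $w\in L^2(\Om)$ set $v=A_{k,m}^{-1}w\in H^2(\Om)\cap H^1_0(\Om)$; then for every $t\in I_m$,
\[
\|A(t)A_{k,m}^{-1}w\|_{L^2(\Om)}=\|A(t)v\|_{L^2(\Om)}\le C(1+L)\,\|v\|_{H^2(\Om)}\le C(1+L)\,C_{\mathrm{reg}}\,\|A_{k,m}v\|_{L^2(\Om)}=C(1+L)\,C_{\mathrm{reg}}\,\|w\|_{L^2(\Om)},
\]
which is the asserted uniform bound. The only genuinely delicate point is the uniformity in $m$ and $k$ of the $H^2$-regularity constant $C_{\mathrm{reg}}$; this is exactly where the convexity of $\Om$ and the uniform $W^{1,\infty}$ control of the (averaged) coefficients coming from Assumption~\ref{assumption} enter, and it is the step I would state most carefully, citing the appropriate elliptic-regularity theorem. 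Everything else is routine.
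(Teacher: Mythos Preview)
Your proof is correct and follows essentially the same route as the paper: establish that the averaged coefficients $\bar a^{(m)}_{ij}$ inherit the uniform ellipticity and $W^{1,\infty}$ bounds, invoke $H^2$ elliptic regularity on the convex domain (the paper cites Grisvard's Theorems 2.2.2.3 and 3.2.1.2) to bound $\|A_{k,m}^{-1}v\|_{H^2}\le C\|v\|_{L^2}$, and then use that $A(t)\colon H^2\cap H^1_0\to L^2$ is bounded uniformly in $t$. Your discussion of the uniformity of $C_{\mathrm{reg}}$ in $m$ and $k$ is, if anything, more explicit than the paper's.
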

\begin{proof}
Take an arbitrary $v\in L^2(\Om)$ and set $w=A^{-1}_{k,m}v$, where  $A_{k,m}$ is an elliptic operator with coefficients $a_{k,m}^{ij}$ defined by
\begin{equation}\label{eq: definition of aijkm}
a_{k,m}^{ij}(x) = \frac{1}{k_m}\int_{I_m}a_{ij}(t,x)dt.
\end{equation}
By Assumption \ref{assumption} we have $a_{k,m}^{ij}\in W^{1,\infty}(\Om) $ with $\|a_{k,m}^{ij}\|_{W^{1,\infty}}\le L$ for all $1\le i,j\le d$.  From   Theorems 2.2.2.3 and 3.2.1.2 in \cite{GrisvardP_2011}, we can conclude that $w\in H^2(\Om)$ and
\begin{equation}\label{eq: w in H2}
\|w\|_{H^2(\Om)}\le C\|v\|_{L^2(\Om)},
\end{equation}
where the constant $C$ depends on $\Om$ and $L$ only. Again by Assumption \ref{assumption}, $A(t)$ is a bounded operator from $H^2(\Om)\cap H^1_0(\Om)$ to $L^2(\Om)$ and as a result
$$
\|A(t)A^{-1}_{k,m}v\|_{L^2(\Om)}=\|A(t)w\|_{L^2(\Om)}\le C\|w\|_{H^2(\Om)}\le C\|v\|_{L^2(\Om)}.
$$
Taking supremum over $v$ concludes the proof.
\end{proof}

As a first application of the discrete maximum regularity we establish semidiscrete best approximation result in the case of  $p=\infty$.   
\begin{theorem}\label{thm: best approxim semidiscrete infty}
Let the coefficients $a_{ij}(t,x)$ satisfy the Assumption \ref{assumption} and let $u$ be the solution to \eqref{eq: heat equation} with $u \in C(\bar I; L^2(\Omega))$ and $u_k$ be the dG(0) semidiscrete solution to \eqref{eq: semidiscrete heat with RHS}. Then under the conditions of Theorem \ref{thm: maximal parabolic time dG0 in L1} there exists a constant $C$ independent of $k$ such that
$$
\|u-u_k\|_{L^{\infty}(I;L^2(\Om))}\le Ce^{\mu T}\lk\inf_{\chi\in \Xk} \|u-\chi\|_{L^{\infty}(I;L^2(\Om))}.
$$
\end{theorem}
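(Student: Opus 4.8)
The plan is to prove the estimate by a duality argument that reduces the $L^{\infty}(I;L^2(\Om))$ bound to the already established $L^1(I;L^2(\Om))$ discrete maximal parabolic regularity of the adjoint dG($0$) problem (Theorem~\ref{thm: maximal parabolic time dG0 in L1}) — this is why only the hypotheses of that theorem are needed. I would work with the projection $\pi_k u\in\Xk$ defined by $(\pi_k u)|_{I_m}:=u(t_m)$; since $u\in C(\bar I;L^2(\Om))$ it is well defined, it fixes every element of $\Xk$, and $\|\pi_k u\|_{L^{\infty}(I;L^2(\Om))}\le\|u\|_{C(\bar I;L^2(\Om))}$. Writing $u-u_k=(u-\pi_k u)+\xi$ with $\xi:=\pi_k u-u_k\in\Xk$, the triangle inequality reduces the claim to a bound on $\|\xi\|_{L^{\infty}(I;L^2(\Om))}$, because $\|u-\pi_k u\|_{L^{\infty}(I;L^2(\Om))}=\|(u-\chi)-\pi_k(u-\chi)\|_{L^{\infty}(I;L^2(\Om))}\le 2\inf_{\chi\in\Xk}\|u-\chi\|_{L^{\infty}(I;L^2(\Om))}$ for every $\chi\in\Xk$.

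To estimate $\xi$, fix a level $m^*$ and $\theta\in L^2(\Om)$ with $\|\theta\|_{L^2(\Om)}\le1$, and let $z_k\in\Xk$ solve the adjoint problem $B(\varphi_k,z_k)=(\varphi_{k,m^*},\theta)_{\Om}$ for all $\varphi_k\in\Xk$. Rearranging $B$ on $\Xk\times\Xk$ shows that $z_k$ is the solution of a \emph{backward} dG($0$) time-stepping scheme: $z_{k,m}=0$ for $m>m^*$, $z_{k,m^*}+k_{m^*}A_{k,m^*}z_{k,m^*}=\theta$, and $z_{k,m}+k_mA_{k,m}z_{k,m}=z_{k,m+1}$ for $m<m^*$; in particular every right-hand side lies in $L^2(\Om)$, so inductively $A_{k,m}z_{k,m}\in L^2(\Om)$ for all $m$. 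Testing with $\varphi_k=\xi$ and using the Galerkin orthogonality \eqref{eq: semidiscrete Galerkin orthgonality} (legitimate since $z_k\in\Xk$) gives $(\xi_{k,m^*},\theta)_{\Om}=B(\xi,z_k)=B(\pi_k u-u,z_k)$.

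Next I would expand $B(\pi_k u-u,z_k)$ via the dual representation \eqref{eq:B_dual}: since $z_k$ is piecewise constant in time the $\partial_t z_k$ terms vanish, and since $(\pi_k u-u)(t_m^-)=u(t_m)-u(t_m)=0$ for every $m$ all jump and endpoint terms drop out, leaving $B(\pi_k u-u,z_k)=\sum_m\int_{I_m}a\bigl(t;(\pi_k u)_m-u(t),z_{k,m}\bigr)\,dt$. Using $a_{ij}=a_{ji}$ and $A_{k,m}z_{k,m}\in L^2(\Om)$ I would transfer the operator onto $z_{k,m}$, writing $a(t;v,z_{k,m})=(A(t)z_{k,m},v)_{\Om}$, and apply Lemma~\ref{lem: intermediate semidiscrete} in the form $\|A(t)z_{k,m}\|_{L^2(\Om)}=\|A(t)A_{k,m}^{-1}(A_{k,m}z_{k,m})\|_{L^2(\Om)}\le C\|A_{k,m}z_{k,m}\|_{L^2(\Om)}$ for $t\in I_m$ (this is where convexity of $\Om$ and Assumption~\ref{assumption} enter). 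This yields $|B(\pi_k u-u,z_k)|\le C\|u-\pi_k u\|_{L^{\infty}(I;L^2(\Om))}\sum_m k_m\|A_{k,m}z_{k,m}\|_{L^2(\Om)}$. Finally, reversing the orientation of time on $[0,t_{m^*}]$ turns the problem for $z_k$ into a forward dG($0$) scheme with vanishing right-hand side and initial datum $\theta$, and all our hypotheses (the coefficient conditions \eqref{eq: condition on time dependent coefficients}--\eqref{eq: condition on omega} and the mesh conditions, including $k-k_{\min}\le\delta_0$) are invariant under this reversal; hence Theorem~\ref{thm: maximal parabolic time dG0 in L1} (with $f\equiv0$, $u_0=\theta$, and $A_{k,m}^*=A_{k,m}$) gives $\sum_m k_m\|A_{k,m}z_{k,m}\|_{L^2(\Om)}\le Ce^{\mu T}\lk\|\theta\|_{L^2(\Om)}\le Ce^{\mu T}\lk$. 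Taking the supremum over $\theta$ and the maximum over $m^*$ then bounds $\|\xi\|_{L^{\infty}(I;L^2(\Om))}$, and the theorem follows after absorbing the $\|u-\pi_k u\|$ factor, using $\lk\ge\ln 4$ from the mesh condition $k\le\frac{1}{4}T$.

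The step I expect to be the main obstacle is twofold. First, recognizing that the adjoint dG($0$) problem carrying the point-in-time datum $\theta$ at level $m^*$ is exactly a time-reversed forward dG($0$) scheme, so that the already-proven $L^1$ maximal regularity (Theorem~\ref{thm: maximal parabolic time dG0 in L1}) applies to it; and second, in estimating $B(\pi_k u-u,z_k)$, moving the spatial derivatives off $u-\pi_k u$ — which is controlled only in the $L^2(\Om)$ norm — onto $z_{k,m}$, which is possible only thanks to the extra elliptic regularity of $z_{k,m}$ supplied by Lemma~\ref{lem: intermediate semidiscrete}, i.e. by the convexity of $\Om$ together with Assumption~\ref{assumption}.
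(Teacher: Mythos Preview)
Your proposal is correct and follows essentially the same duality strategy as the paper: pair the error with a backward dG($0$) problem, move the elliptic operator onto the dual variable via Lemma~\ref{lem: intermediate semidiscrete}, and invoke the $L^1$ discrete maximal regularity of Theorem~\ref{thm: maximal parabolic time dG0 in L1} (after time reversal). The paper's implementation differs only cosmetically: it sets up the dual problem with a regularized temporal Dirac right-hand side and first proves the stability bound $\|u_k\|_{L^\infty(I;L^2(\Om))}\le Ce^{\mu T}\lk\|u\|_{L^\infty(I;L^2(\Om))}$, then shifts by an arbitrary $\chi\in\Xk$; in that version both the elliptic term and a jump term $\sum_m(u(t_m),[g_k]_m)_{\Om}$ survive and are estimated separately. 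Your choice to work directly with $\pi_k u-u_k$ makes the jump contribution vanish since $(\pi_k u-u)(t_m^-)=0$, which is a mild simplification.

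One small slip worth tightening: you assert $\pi_k u\in\Xk$ and hence $\xi\in\Xk$, but under the sole hypothesis $u\in C(\bar I;L^2(\Om))$ the point values $u(t_m)$ lie only in $L^2(\Om)$, not necessarily in $H^1_0(\Om)$. This does not break the argument, because (as you in effect observe) the backward recursion forces $A_{k,m}z_{k,m}\in L^2(\Om)$, so the identity $B(\varphi_k,z_k)=(\varphi_{k,m^*},\theta)_{\Om}$ extends by density to all piecewise-constant $\varphi_k$ with values in $L^2(\Om)$, and the elliptic term can be written as the $L^2$ pairing $\int_{I_m}((\pi_k u-u)(t),A(t)z_{k,m})_{\Om}\,dt$. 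Stating this extension explicitly, rather than claiming $\xi\in\Xk$, would make the write-up clean.
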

\begin{proof}
Let $\tilde{t} \in(0,T]$  be an arbitrary but fixed point in time. Without loss of generality we assume $\tilde{t} \in I_M=(t_{M-1},T]$.
We consider the following dual problem
\begin{equation}\label{eq: Green time only}
\begin{aligned}
\pa_tg(t,x)-A(t,x) g(t,x) &= \tilde{\theta}(t)u_k(\tilde{t},x), & (t,x) &\in \IOm,\;  \\
    g(t,x) &= 0,    & (t,x) &\in I\times\pa\Omega, \\
   g(T,x) &= 0,    & x &\in \Omega,
\end{aligned}
\end{equation}
where $\tilde{\theta}\in C^\infty(0,T)$ is the regularized Delta function in time with the properties $\supp(\tilde{\theta})\subset I_M$, $\|\tilde{\theta}\|_{L^1(I_M)}\le C$ and
$$
(\tilde{\theta},\varphi_k)_{I_M}=\varphi_k(\tilde{t}),\quad \forall \varphi_k\in \Xk.
$$ 
Let ${g}_{k}\in \Xk$ be dG($0$) approximation of ${g}$, i.e., $B({g}-{g}_{k},\varphi_{k})=0$ for any $\varphi_k\in \Xk$. 
Then 
$$
\begin{aligned}
\|u_{k}(\tilde{t})\|^2_{L^2(\Om)}&=(u_k(\cdot,\cdot),\tilde{\theta}(\cdot)u_k(\tilde{t},\cdot))_{I\times\Om}\\
&=B(u_k,g)=B(u_k,g_k)=B(u,g_k)\\
&=\sum_{m=1}^M(u,A(t)g_{k,m})_{I_m\times \Om}-\sum_{m=1}^M(u(t_m),[g_k]_m)_{\Om}:=J_1+J_2.
\end{aligned}
$$
Using the H\"{o}lder inequality in time and the Cauchy-Schwarz inequality in space, Theorem \ref{thm: maximal parabolic time dG0 in L1} and Lemma \ref{lem: intermediate semidiscrete}, we have 
$$
\begin{aligned}
J_1&\le \|u\|_{L^\infty(I;L^2(\Om))} \sum_{m=1}^Mk_m\sup_{t\in I_m}\|A(t)A_{k,m}^{-1}A_{k,m}g_{k,m}\|_{L^2(\Om)}\\
&\le C\|u\|_{L^\infty(I;L^2(\Om))} \sum_{m=1}^Mk_m\sup_{t\in I_m}\|A(t)A_{k,m}^{-1}\|_{L^2\to L^2}\|A_{k,m}g_{k,m}\|_{L^2(\Om)}\\
&\le Ce^{\mu T}\lk\|u\|_{L^\infty(I;L^2(\Om))} \|\tilde{\theta}\|_{L^1(I)}\|u_k(\tilde{t})\|_{L^2(\Om)}\\
&\le  Ce^{\mu T}\lk\|u\|_{L^\infty(I;L^2(\Om))}\|u_k(\tilde{t})\|_{L^2(\Om)}.
\end{aligned}
$$ 
Similarly, we obtain 
$$
\begin{aligned}
J_2& \le \sum_{m=1}^M \ltwonorm{u(t_m)} \ltwonorm{[g_k]_m}\\
&\le \|u\|_{L^\infty(I;L^2(\Om))} \sum_{m=1}^M\|[g_k]_m\|_{L^2(\Om)}\\
&\le  Ce^{\mu T}\lk\|u\|_{L^\infty(I;L^2(\Om))}\|u_k(\tilde{t})\|_{L^2(\Om)}.
\end{aligned}
$$ 
Canceling by $\|u_k(\tilde{t})\|_{L^2(\Om)}$ and taking the supremum over $\tilde{t}$, we establish
\begin{equation}\label{eq: after canceling uk}
\|u_{k}\|_{L^\infty(I;L^2(\Om))}\le Ce^{\mu T}\lk\|u\|_{L^\infty(I;L^2(\Om))}.
\end{equation}
Using  that the dG($0$) method is invariant on $\Xk$, by replacing $u$ and $u_{k}$ with $u-\chi$ and $u_{k}-\chi$ for any $\chi\in \Xk$, and using the triangle inequality we obtain the theorem.
\end{proof}

For $1\le p<\infty$, we can obtain the following result which is  similar to the result was obtained for time independent coefficients in \cite{LeykekhmanD_VexlerB_2016b} for the $L^p(I;L^s(\Om))$ norm. 
To state the result we define a projection $\pi_k$ for $u \in C(I,L^2(\Omega))$ with $\pi_k u|_{I_m} \in P_0(I_m;L^2(\Omega))$ for $m=1,2,\dots,M$ on each subinterval $I_m$ by
\begin{equation}\label{eq: projection pi_k}
\pi_k u(t)=u(t_m), \quad t\in I_m.
\end{equation}
\begin{theorem}\label{thm: semidiscrete best}
Let the coefficients $a_{ij}(t,x)$ satisfy the Assumption \ref{assumption} and let $u$ be the solution to \eqref{eq: heat equation} with $u \in C(\bar I; L^2(\Omega))$ and $u_k$ be the dG(0) semidiscrete solution to \eqref{eq: semidiscrete heat with RHS}. Then under the conditions of Theorem \ref{thm: maximal parabolic time dG0 in L1} there exists a constant $C$ independent of $k$ such that
$$
\|u-u_k\|_{L^p(I;L^2(\Om))}\le Ce^{\mu T}\lk \|u-\pi_k u\|_{L^p(I;L^2(\Om))},\quad 1\le p<\infty,
$$
where the projection $\pi_k$ is defined above in \eqref{eq: projection pi_k}.
\end{theorem}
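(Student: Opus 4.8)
The plan is to mimic the duality argument used in the proof of Theorem~\ref{thm: best approxim semidiscrete infty}, but with the roles of the norms adjusted so that the $L^1(I;L^2(\Om))$ maximal parabolic regularity (Theorem~\ref{thm: maximal parabolic time dG0 in L1}) is applied to the dual solution $g_k$, while the primal quantity is measured in $L^p(I;L^2(\Om))$. First I would reduce, exactly as at the end of the proof of Theorem~\ref{thm: best approxim semidiscrete infty}, to establishing the \emph{stability} bound
$$
\|u_k\|_{L^p(I;L^2(\Om))}\le Ce^{\mu T}\lk\,\|\pi_k u\|_{L^p(I;L^2(\Om))},
$$
since the dG($0$) method is invariant on $\Xk$ and $\pi_k$ reproduces elements of $\Xk$; replacing $u$ by $u-\chi$ with $\chi=\pi_k u$ and using the triangle inequality then yields the theorem. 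Note the right-hand side involves $\pi_k u$ rather than $u$ itself, which is the natural object because $B(u,\varphi_k)$ should be rewritten in terms of the nodal values $u(t_m)$.

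Next I would fix $1\le p<\infty$, let $p'$ be the conjugate exponent, and choose $\varphi\in L^{p'}(I;L^2(\Om))$ with $\|\varphi\|_{L^{p'}(I;L^2(\Om))}\le 1$. Following the dual representation \eqref{eq:B_dual}, I would set up the backward dual problem
$$
-\pa_t g - A(t)g = \varphi,\qquad g(T)=0,
$$
let $g_k\in\Xk$ be its dG($0$) approximation, so that $B(u_k,g_k)=B(u_k,g)$ and, by Galerkin orthogonality, $=B(u,g_k)$. Expanding $B(u,g_k)$ via \eqref{eq:B_dual} gives the two terms
$$
J_1=\sum_{m=1}^M (u, A(t)g_{k,m})_{I_m\times\Om},\qquad
J_2=-\sum_{m=1}^{M-1}(u(t_m),[g_k]_m)_\Om,
$$
exactly as in the proof of Theorem~\ref{thm: best approxim semidiscrete infty}. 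For $J_1$ I would insert $A_{k,m}^{-1}A_{k,m}$, bound $\|A(t)A_{k,m}^{-1}\|_{L^2\to L^2}\le C$ by Lemma~\ref{lem: intermediate semidiscrete}, and then use H\"older in time with exponents $p,p'$:
$$
J_1\le C\,\|\pi_k u\|_{L^p(I;L^2(\Om))}\Big(\sum_{m=1}^M k_m^{p'}\!\!\sup_{t\in I_m}\|A_{k,m}g_{k,m}\|^{p'}_{L^2(\Om)}\Big)^{1/p'}
= C\,\|\pi_k u\|_{L^p}\Big(\sum_m k_m\|A_{k,m}g_{k,m}\|^{p'}_{L^2(\Om)}\Big)^{1/p'},
$$
which is exactly the $L^{p'}(I;L^2(\Om))$ discrete maximal parabolic norm of $g_k$. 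For $1<p<\infty$ this is controlled by $Ce^{\mu T}\lk\|\varphi\|_{L^{p'}(I;L^2(\Om))}$ via Corollary~\ref{cor: discrete maximal} applied to the (time-reversed) dual problem; for $p=1$, hence $p'=\infty$, one uses Theorem~\ref{thm: maximal parabolic time dG0} with zero terminal data instead. The term $J_2$ is handled the same way: $\sum_m\|[g_k]_m\|_{L^2(\Om)}$ (for $p=1$) or $\big(\sum_m k_m\|[g_k]_m/k_m\|^{p'}\big)^{1/p'}$ (for $p>1$) is again part of the discrete maximal parabolic norm of $g_k$, and is bounded by $Ce^{\mu T}\lk\|\varphi\|_{L^{p'}}$, while $\|u(t_m)\|$ assembles into $\|\pi_k u\|_{L^p}$. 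Taking the supremum over $\varphi$ gives the stability bound, and the reduction above finishes the proof.

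The main obstacle I anticipate is bookkeeping the dual problem correctly: one must verify that the time-reversed dual equation satisfies the same structural hypotheses (ellipticity, the modulus-of-continuity condition \eqref{eq: condition on omega} on the coefficients, and the mesh conditions), so that Theorem~\ref{thm: maximal parabolic time dG0} and Corollary~\ref{cor: discrete maximal} genuinely apply to $g_k$ with terminal rather than initial data; the substitution $t\mapsto T-t$ does this, but the averaged operators $A_{k,m}$ and the averaged right-hand side must be matched to the dG($0$) dual formulation so that Galerkin orthogonality and the identity $B(u_k,g_k)=B(u,g_k)$ hold on the nose. A secondary technical point is the endpoint $p=\infty$: there the present $L^\infty$ best-approximation statement already follows from Theorem~\ref{thm: best approxim semidiscrete infty}, so the theorem is stated only for $1\le p<\infty$, which sidesteps the $p'=1$ dual side; for $1\le p<\infty$ the dual exponent $p'$ satisfies $1<p'\le\infty$, and both of those cases are covered by the theorems already proved. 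Everything else — the H\"older splitting, the use of Lemma~\ref{lem: intermediate semidiscrete}, the reassembly of $\pi_k u$ — is routine.
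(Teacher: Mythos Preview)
Your approach is essentially the one the paper intends: the paper's proof is a one-liner pointing to Corollary~\ref{cor: discrete maximal} and to the duality argument of Theorem~\ref{thm: best approxim semidiscrete infty}, and your proposal correctly spells this out by testing against $\varphi\in L^{p'}(I;L^2(\Om))$, passing to the dual dG(0) solution $g_k$, and invoking the $L^{p'}$ discrete maximal parabolic regularity (Corollary~\ref{cor: discrete maximal} for $1<p'<\infty$, Theorem~\ref{thm: maximal parabolic time dG0} for $p'=\infty$) on the time-reversed problem.

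One inaccuracy is worth flagging. In the bound for $J_1=\sum_m\int_{I_m}(u(t),A(t)g_{k,m})\,dt$ the integrand contains $u(t)$, not $u(t_m)$, so H\"older in time produces $\|u\|_{L^p(I;L^2(\Om))}$, not $\|\pi_k u\|_{L^p(I;L^2(\Om))}$ as you wrote; only $J_2$ naturally involves the nodal values. Thus the stability estimate you actually obtain is $\|u_k\|_{L^p}\le Ce^{\mu T}\lk\big(\|u\|_{L^p}+\|\pi_k u\|_{L^p}\big)$. This is harmless for the final result: after the substitution $u\mapsto u-\pi_k u$, $u_k\mapsto u_k-\pi_k u$ (which requires $\pi_k u\in\Xk$, i.e.\ $u(t_m)\in H^1_0(\Om)$, a regularity that follows from Assumption~\ref{assumption} and convexity), the $J_2$ contribution vanishes because $(u-\pi_k u)(t_m)=0$, and $J_1$ yields exactly $\|u-\pi_k u\|_{L^p}$. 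Equivalently, you can skip the separate stability step and work directly with $B(u-\pi_k u,g_k)$, using that the jump terms drop out by construction of $\pi_k$. With this correction your argument is complete and matches the paper.
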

\begin{proof}
The proof uses the result of Corollary~\ref{cor: discrete maximal} and goes along the lines of the proof of Theorem 9 in~\cite{LeykekhmanD_VexlerB_2016b} and Theorem \ref{thm: best approxim semidiscrete infty} above.
\end{proof}

\subsection{Applications to fully discrete error estimates}

Similarly to the semidiscrete case, as an application of the fully  discrete  maximal parabolic regularity, we show optimal convergence rates for the dG($0$)cG($r$) solution. As in the semidiscrete case, first using the convexity of $\Om$ and Assumption \ref{assumption} and in addition that the triangulation $\mathcal{T}$  is quasi-uniform we establish the space discrete version of the Lemma \ref{lem: intermediate semidiscrete}.
Thus, for rest of the paper  we assume
\begin{assumption}\label{assume: quasi}
There exists a constant $C$ independent of $h$ such that 
$$ 
\operatorname{diam}(\tau)\le h \le C |\tau|^{\frac{1}{d}}, \quad \forall \tau\in \mathcal{T},
$$
\end{assumption}
where $d=2,3$ is the dimension on $\Om$.
For the results below  we will require one Ritz projection $R_{h}(t)\colon H^1_0(\Omega)\to V_h$,  which is for every $t\in \bar{I}$ defined by 
\begin{equation}\label{eq: defition Ritz}
\sum_{i,j=1}^d\left(a_{ij}(t)\pa_i( R_h(t) v),\pa_j \chi\right)_\Omega=\sum_{i,j=1}^d\left(a_{ij}(t)\pa_i v,\pa_j \chi\right)_\Omega,\quad \forall \chi\in V_h
\end{equation}
and another 
Ritz projection $R_{kh,m}\colon H^1_0(\Omega)\to V_h$,  which is for every $m=1,2,\cdots,M$ defined by 
\begin{equation}\label{eq: defition Ritz km}
\sum_{i,j=1}^d\left(a_{k,m}^{ij}\pa_i R_{kh,m} v,\pa_j \chi\right)_\Omega=\sum_{i,j=1}^d\left(a_{k,m}^{ij}\pa_i v,\pa_j \chi\right)_\Omega,\quad \forall \chi\in V_h,
\end{equation}
where $a_{k,m}^{ij}$ are defined in \eqref{eq: definition of aijkm}.

\begin{lemma}\label{lem: intermediate fully discrete}
There exists a constant $C$ independent of $k$ and $h$ such that
$$
\sup_{t\in I_m}\|A_h(t)A^{-1}_{kh,m}\|_{L^2\to L^2}\le C, \quad m=1,2,\dots,M.
$$
\end{lemma}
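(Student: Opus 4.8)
The plan is to mimic the proof of Lemma \ref{lem: intermediate semidiscrete} at the discrete level, using the discrete $H^2$-type elliptic regularity that comes from combining convexity of $\Om$, Assumption \ref{assumption}, and the quasi-uniformity of the mesh from Assumption \ref{assume: quasi}. Fix $m$, take an arbitrary $v_h \in V_h$, and set $w_h = A_{kh,m}^{-1} v_h \in V_h$. By the definition \eqref{eq: Akhm} of $A_{kh,m}$ together with \eqref{eq: definition of Ah}, $w_h$ is the finite element solution of the elliptic problem with time-averaged coefficients $a_{k,m}^{ij}$ defined in \eqref{eq: definition of aijkm}; equivalently, $w_h = R_{kh,m} w$, where $w \in H^1_0(\Om)$ solves the continuous elliptic problem $A_{k,m} w = v_h$. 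By Assumption \ref{assumption} we have $a_{k,m}^{ij} \in W^{1,\infty}(\Om)$ with $\|a_{k,m}^{ij}\|_{W^{1,\infty}} \le L$ uniformly in $m$, so exactly as in Lemma \ref{lem: intermediate semidiscrete} the convexity of $\Om$ and \cite{GrisvardP_2011} give $w \in H^2(\Om)$ with $\|w\|_{H^2(\Om)} \le C \|v_h\|_{L^2(\Om)}$, with $C$ depending only on $\Om$ and $L$.

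Next I would pass from $w$ to $w_h = R_{kh,m} w$ using standard finite element error estimates for the Ritz projection associated with the (time-independent) coefficients $a_{k,m}^{ij}$: on a quasi-uniform mesh one has $\|w - R_{kh,m} w\|_{H^1(\Om)} \le C h \|w\|_{H^2(\Om)}$, and hence the $H^1$-stability bound $\|w_h\|_{H^1(\Om)} \le C \|w\|_{H^2(\Om)} \le C \|v_h\|_{L^2(\Om)}$, uniformly in $k$, $h$ and $m$ (the constants depend only on $\Om$, $L$ and the quasi-uniformity constant, because the coefficient bounds are uniform). Then for the target operator, I estimate for any $\chi \in V_h$, using \eqref{eq: definition of Ah}, the uniform bound $\|a_{ij}(t)\|_{W^{1,\infty}} \le L$ from Assumption \ref{assumption}, and an integration by parts / direct bound: $(A_h(t) w_h, \chi)_\Om = \sum_{i,j}(a_{ij}(t)\pa_i w_h, \pa_j \chi)_\Om \le C \|w_h\|_{H^1(\Om)} \|\chi\|_{H^1(\Om)}$, which only gives an $H^{-1}$-type bound. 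To get an $L^2$ bound on $A_h(t) w_h$ I would instead take $\chi = A_h(t) w_h \in V_h$ and invoke an inverse inequality $\|\chi\|_{H^1(\Om)} \le C h^{-1} \|\chi\|_{L^2(\Om)}$ — but that loses a factor $h^{-1}$, so the cleaner route is to write $A_h(t) w_h = P_h \big(A(t) \tilde w\big)$ for a suitable lift, or better: note $(A_h(t) w_h, \chi) = \sum_{i,j}(a_{ij}(t)\pa_i w_h,\pa_j\chi)$ and compare with $A(t)$ acting on the continuous function $w$; since $w \in H^2(\Om) \cap H^1_0(\Om)$, $A(t) w \in L^2(\Om)$ with $\|A(t)w\|_{L^2(\Om)} \le C\|w\|_{H^2(\Om)}$, and then $\|A_h(t) w_h\|_{L^2(\Om)} \le \|P_h A(t) w\|_{L^2(\Om)} + \|A_h(t)(w_h - R_h(t) w)\|_{L^2(\Om)} + \|(A_h(t) R_h(t) - A(t))w\|_{L^2(\Om)}$, each term controlled by $\|v_h\|_{L^2(\Om)}$ via $H^1$-boundedness of Ritz projections, inverse estimates on the finite-dimensional correction terms, and $\|w\|_{H^2(\Om)} \le C\|v_h\|_{L^2(\Om)}$. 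Taking the supremum over $\|v_h\|_{L^2(\Om)} \le 1$ and over $t \in I_m$ then yields the claim.

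The main obstacle is the last step: unlike in the semidiscrete case, $A_h(t) A_{kh,m}^{-1}$ is not simply $A(t)$ composed with a resolvent acting on a function with full $H^2$ regularity, because $A_h(t)$ involves the projected bilinear form on $V_h$, so I cannot directly write $\|A_h(t) w_h\|_{L^2} \le C\|w_h\|_{H^2}$. The resolution is to exploit that the two discrete operators $A_h(t)$ and $A_{kh,m}$ differ only in their (uniformly bounded, $W^{1,\infty}$) coefficients, so $A_h(t) A_{kh,m}^{-1} = \operatorname{Id} + (A_h(t) - A_{kh,m}) A_{kh,m}^{-1}$, and the perturbation $(A_h(t) - A_{kh,m}) A_{kh,m}^{-1}$ is, on $V_h$, a composition of a form whose coefficients are bounded by $2L$ in $W^{1,\infty}$ with the discrete solution operator — which maps $L^2$ into a discrete $H^2$-stable subspace precisely because the mesh is quasi-uniform and $\Om$ is convex. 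Making this "discrete $H^2$ regularity" rigorous (e.g. via the known estimate $\|A_{kh,m}^{-1}\|_{L^2 \to L^2}$-bounds together with the Ritz-projection $H^1$-stability and a comparison with the continuous $H^2$ estimate) is the only nontrivial point; everything else is the verbatim analogue of Lemma \ref{lem: intermediate semidiscrete}.
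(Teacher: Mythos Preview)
You have assembled all the right ingredients --- the continuous solution $w = A_{k,m}^{-1} v_h \in H^2(\Omega)$ with $\|w\|_{H^2} \le C\|v_h\|_{L^2}$, the identification $w_h = R_{kh,m} w$, the Ritz error bound $\|w - w_h\|_{H^1} \le Ch\|w\|_{H^2}$, and the inverse inequality on $V_h$ --- but you abandoned the direct route one step too early. The clean argument, which is exactly what the paper does, is precisely your discarded choice $\chi = z_h := A_h(t) w_h$, but with the splitting $w_h = w + (w_h - w)$ inserted \emph{inside the bilinear form} before any norm bounds:
\[
\|z_h\|_{L^2(\Om)}^2 = \sum_{i,j}(a_{ij}(t)\partial_i w, \partial_j z_h)_\Omega + \sum_{i,j}(a_{ij}(t)\partial_i (w_h - w), \partial_j z_h)_\Omega =: J_1 + J_2.
\]
For $J_1$, since $w \in H^2(\Omega)$ you integrate by parts and obtain $J_1 \le C L \|w\|_{H^2}\|z_h\|_{L^2}$ with no loss of $h$. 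For $J_2$, the $H^1$ Ritz error contributes a factor $h$ and the inverse inequality on $z_h \in V_h$ costs $h^{-1}$, and these cancel:
\[
J_2 \le C\|w - w_h\|_{H^1(\Om)}\|z_h\|_{H^1(\Om)} \le Ch\|w\|_{H^2(\Om)}\cdot Ch^{-1}\|z_h\|_{L^2(\Om)}.
\]
Dividing through by $\|z_h\|_{L^2}$ finishes the proof.

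Your worry that ``the inverse inequality loses $h^{-1}$'' is valid only if you apply it to the \emph{whole} bilinear form $\sum_{i,j}(a_{ij}\partial_i w_h, \partial_j z_h)$; the point is that it is needed only on the piece carrying $w_h - w$, where the Ritz error has already supplied the compensating $h$. Your middle route via $A_h(t) R_h(t) w = P_h A(t) w$ can in fact be pushed through (it requires the $L^2$ Ritz error $Ch^2$ to balance an $h^{-2}$ inverse estimate for $A_h$ on the discrete correction), but it is more circuitous. Your perturbation identity $A_h(t) A_{kh,m}^{-1} = \operatorname{Id} + (A_h(t)-A_{kh,m})A_{kh,m}^{-1}$ merely postpones the same difficulty, since the coefficients of $A_h(t)-A_{kh,m}$ are not small and $(A_h(t)-A_{kh,m})w_h$ still needs the same discrete $H^2$-type bound. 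The add/subtract-$w$ trick above \emph{is} the rigorous ``discrete $H^2$ regularity'' you correctly flagged as the only nontrivial point.
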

\begin{proof}
Take an arbitrary $v\in L^2(\Om)$ and define $w_h=A^{-1}_{kh,m}P_hv$. In addition we also define $w=A^{-1}_{k,m}v$. Notice that $R_{kh,m}w = w_h$.
 By the definition of $A_h$ in \eqref{eq: definition of Ah},
$$
\left(A_h(t)w_h,\varphi_h\right)_\Om=\sum_{i,j=1}^d\left(a_{ij}(t)\pa_i w_h,\pa_j\varphi_h\right)_\Omega, \quad \forall \varphi_h\in V_h.
$$
 Put $z_h(t)=A_h(t)w_h$. Then adding and subtracting $w$, we have
 \begin{align*}
\|z_h(t)\|^2_{L^2(\Om)}&=\|A_h(t)w_h\|^2_{L^2(\Om)}= \sum_{i,j=1}^d\left(a_{ij}(t)\pa_i w_h,\pa_j z_h(t)\right)_\Omega\\
&= \sum_{i,j=1}^d\left(a_{ij}(t)\pa_i w,\pa_j z_h(t)\right)_\Omega+ \sum_{i,j=1}^d\left(a_{ij}(t)\pa_i (w_h-w),\pa_j z_h(t)\right)_\Omega:=J_1+J_2.
 \end{align*}
To estimate $J_1$ we integrate by parts and use the Cauchy-Schwarz inequality
$$
J_1 = -\sum_{i,j=1}^d\left(\pa_j (a_{ij}(t)\pa_i w),z_h(t)\right)_\Omega\le L\|w\|_{H^2(\Om)}\|z_h(t)\|_{L^2(\Om)}.
$$
Using that $w_h=R_{kh,m}w$, the standard elliptic error estimates and the inverse inequality,  we obtain
$$
\begin{aligned}
J_2 \le \sup_{i,j}\|a_{ij}(t)\|_{L^\infty(\Om)}\|w-w_h\|_{H^1(\Om)}\|z_h(t)\|_{H^1(\Om)}
&\le Ch\|w\|_{H^2(\Om)}\|z_h(t)\|_{H^1(\Om)}\\
&\le C\|w\|_{H^2(\Om)}\|z_h(t)\|_{L^2(\Om)}.
\end{aligned}
$$
Combining the estimates for $J_1$ and $J_2$ we can conclude that 
\begin{equation}\label{eq: estimate for zh}
\|z_h(t)\|_{L^2(\Om)}\le C\|w\|_{H^2(\Om)}\le C\|v\|_{L^2(\Om)},
\end{equation}
where we used that the definition of $w$ is identical to the definition of $w$ in Lemma \ref{lem: intermediate semidiscrete} and from \eqref{eq: w in H2} we know that $\|w\|_{H^2(\Om)}\le C\|v\|_{L^2(\Om)}$.   Taking supremum over $v$ concludes the proof.
\end{proof}

 Similar to the semidiscrete case, we also establish a corresponding result for $p=\infty$ in the fully discrete case. 
\begin{theorem}\label{thm: best approxim fully infty}
Let the coefficients $a_{ij}(t,x)$ satisfy the Assumption \ref{assumption} and let $u$  be the solution to \eqref{eq: heat equation} with $u \in C(\bar I; L^2(\Omega))$ and $u_{kh}$ be the dG(0)cG(r) solution for  $r\geq 1$ on a quasi-uniform triangulation  $\mathcal{T}$ with the coefficients $a_{ij}(t,x)$ satisfying the Assumption \ref{assumption}. Then under the assumption of Theorem  \ref{thm: maximal parabolic fully discrete} there exists a constant $C$ independent of $k$ and $h$ such that for $1\le p<\infty$,
$$
\|u-u_{kh}\|_{L^\infty(I;L^2(\Om))}\le Ce^{\mu T}\lk \left(\min_{\chi\in \Xkh}\|u- \chi\|_{L^\infty(I;L^2(\Om))}+\|u-R_{h}u\|_{L^\infty(I;L^2(\Om))}\right).
$$
\end{theorem}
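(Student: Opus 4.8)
The plan is to mirror the duality argument used to prove Theorem~\ref{thm: best approxim semidiscrete infty}, substituting the fully discrete ingredients for the semidiscrete ones. First I would fix an arbitrary $\tilde t\in(0,T]$, assume without loss of generality $\tilde t\in I_M$, and consider the backward dual problem
\[
\pa_t g - A(t)g = \tilde\theta(t)\,u_{kh}(\tilde t),\qquad g(T)=0,
\]
with the same regularized Dirac $\tilde\theta$ as in the semidiscrete proof, and let $g_{kh}\in\Xkh$ be the dG($0$)cG($r$) approximation of $g$, defined by $B(\varphi_{kh},g-g_{kh})=0$ for all $\varphi_{kh}\in\Xkh$. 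Testing the dual equation against $u_{kh}$, and using in turn this (dual) Galerkin orthogonality, the (fully discrete) Galerkin orthogonality $B(u-u_{kh},\varphi_{kh})=0$, and the dual form \eqref{eq:B_dual} of $B$ together with $g_{kh}|_{I_m}\in\Ppol{0}(I_m;V_h)$, one obtains
\[
\|u_{kh}(\tilde t)\|_{L^2(\Om)}^2 = B(u_{kh},g) = B(u_{kh},g_{kh}) = B(u,g_{kh}) = \sum_{m=1}^M\int_{I_m}a(t;u(t),g_{kh,m})\,dt - \sum_m (u(t_m),[g_{kh}]_m)_\Om =: J_1 + J_2.
\]

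The term $J_2$ is handled exactly as in the semidiscrete case: $|J_2|\le\|u\|_{L^\infty(I;L^2(\Om))}\sum_m\|[g_{kh}]_m\|_{L^2(\Om)}$. The one genuinely new point is $J_1$. Since $g_{kh,m}\in V_h$, the bilinear form $a(t;\cdot,\cdot)$ can be transferred onto the discrete operators only via the elliptic projection: using the definition \eqref{eq: defition Ritz} of $R_h(t)$, the definition \eqref{eq: definition of Ah} of $A_h(t)$ and the symmetry of $A_h(t)$ on $V_h$,
\[
a(t;u(t),g_{kh,m}) = a\big(t;R_h(t)u(t),g_{kh,m}\big) = \big(R_h(t)u(t),\,A_h(t)A_{kh,m}^{-1}A_{kh,m}g_{kh,m}\big)_\Om .
\]
Applying the Cauchy--Schwarz inequality, Lemma~\ref{lem: intermediate fully discrete} to the middle factor, the bound $\|R_h(t)u(t)\|_{L^2(\Om)}\le\|u\|_{L^\infty(I;L^2(\Om))}+\|u-R_h u\|_{L^\infty(I;L^2(\Om))}$, and the fact that $g_{kh,m}$ is constant in time on $I_m$ then yields
\[
|J_1|\le C\big(\|u\|_{L^\infty(I;L^2(\Om))}+\|u-R_h u\|_{L^\infty(I;L^2(\Om))}\big)\sum_{m=1}^M k_m\|A_{kh,m}g_{kh,m}\|_{L^2(\Om)} .
\]
Both $\sum_m k_m\|A_{kh,m}g_{kh,m}\|_{L^2(\Om)}$ and $\sum_m\|[g_{kh}]_m\|_{L^2(\Om)}$ are then bounded by $Ce^{\mu T}\lk\|u_{kh}(\tilde t)\|_{L^2(\Om)}$ by applying the fully discrete $L^1$ maximal parabolic regularity of Theorem~\ref{thm: maximal parabolic fully in L1} to the time-reversed dual problem, whose right-hand side has $L^1(I;L^2(\Om))$-norm at most $\|\tilde\theta\|_{L^1(I)}\|u_{kh}(\tilde t)\|_{L^2(\Om)}\le C\|u_{kh}(\tilde t)\|_{L^2(\Om)}$ and whose initial datum vanishes.

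Collecting the bounds on $J_1$ and $J_2$, cancelling one factor of $\|u_{kh}(\tilde t)\|_{L^2(\Om)}$ and taking the supremum over $\tilde t$ gives the stability estimate
\[
\|u_{kh}\|_{L^\infty(I;L^2(\Om))}\le Ce^{\mu T}\lk\big(\|u\|_{L^\infty(I;L^2(\Om))}+\|u-R_h u\|_{L^\infty(I;L^2(\Om))}\big).
\]
Finally, since the dG($0$)cG($r$) scheme is invariant on $\Xkh$, replacing $(u,u_{kh})$ by $(u-\chi,u_{kh}-\chi)$ for arbitrary $\chi\in\Xkh$ --- and observing that $R_h(t)\big(u(t)-\chi(t)\big)=R_h(t)u(t)-\chi(t)$ since $\chi(t)\in V_h$, so that the Ritz-defect term is unchanged --- together with the triangle inequality gives the assertion. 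I expect the only real obstacle to be the treatment of $J_1$: isolating the Ritz-projection defect in precisely the form to which Lemma~\ref{lem: intermediate fully discrete} applies; it is through that lemma that the convexity of $\Om$, Assumption~\ref{assumption}, and the quasi-uniformity of $\mathcal T$ (Assumption~\ref{assume: quasi}) enter. Everything else is a routine transcription of the proof of Theorem~\ref{thm: best approxim semidiscrete infty}.
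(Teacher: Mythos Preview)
Your proof is correct and follows essentially the same duality argument as the paper: the same dual problem with regularized Dirac data, the same decomposition $B(u,g_{kh})=J_1+J_2$, the key identity $a(t;u,g_{kh,m})=(R_h(t)u,A_h(t)g_{kh,m})_\Om$ combined with the insertion $A_h(t)A_{kh,m}^{-1}\cdot A_{kh,m}$ and Lemma~\ref{lem: intermediate fully discrete}, and Theorem~\ref{thm: maximal parabolic fully in L1} for the dual scheme. The only cosmetic difference is that you split $\|R_h u\|_{L^\infty(I;L^2(\Om))}\le\|u\|_{L^\infty(I;L^2(\Om))}+\|u-R_hu\|_{L^\infty(I;L^2(\Om))}$ before the shift by $\chi$ whereas the paper does it after; your explicit remark that $R_h(t)(u-\chi)-(u-\chi)=R_h(t)u-u$ for $\chi(t)\in V_h$ is a useful clarification the paper leaves implicit.
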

\begin{proof}
As in the proof of Theorem \ref{thm: best approxim semidiscrete infty}, let $\tilde{t} \in(0,T]$  be an arbitrary but fixed point in time. Without loss of generality we assume $\tilde{t} \in I_M=(t_{M-1},T]$. Consider the following dual problem
\begin{equation}\label{eq: Green time fully}
\begin{aligned}
\pa_tg(t,x)-A(t,x) g(t,x) &= \tilde{\theta}(t)u_{kh}(\tilde{t},x), & (t,x) &\in \IOm,\;  \\
    g(t,x) &= 0,    & (t,x) &\in I\times\pa\Omega, \\
   g(T,x) &= 0,    & x &\in \Omega,
\end{aligned}
\end{equation}
where $\tilde{\theta}\in C^\infty(0,T)$ is the regularized Delta function in time with properties $\supp(\tilde{\theta})\subset I_M$, $\|\tilde{\theta}\|_{L^1(I_M)}\le C$ and
$$
(\tilde{\theta},\varphi_{kh})_{I_M}=\varphi_{kh}(\tilde{t}),\quad \forall \varphi_{kh}\in \Xkh.
$$ 
Let ${g}_{kh}$ be dG($0$)cG($r$) approximation of ${g}$, i.e., $B({g}-{g}_{kh},\varphi_{kh})=0$ for any $\varphi_{kh}\in \Xkh$. 
Then 
$$
\begin{aligned}
\|u_{kh}(\tilde{t})\|^2_{L^2(\Om)}&=(u_{kh},\tilde{\theta}u_{kh}(\tilde{t}))_{I\times\Om}\\
&=B(u_{kh},g)=B(u_{kh},g_{kh})=B(u,g_{kh})\\
&=\sum_{m=1}^M(R_h(t)u,A_h(t)g_{kh,m})_{I_m\times \Om}-\sum_{m=1}^M(u(t_m),[g_{kh}]_m)_{\Om}=J_1+J_2.
\end{aligned}
$$
Using the H\"{o}lder inequality in time and the Cauchy-Schwarz inequality in space, Lemma \ref{lem: intermediate fully discrete} and Theorem \ref{thm: maximal parabolic fully in L1}, we have 
$$
\begin{aligned}
J_1&\le \|R_{h}u\|_{L^\infty(I;L^2(\Om))} \sum_{m=1}^Mk_m\sup_{t\in I_m}\|A_h(t)A^{-1}_{kh,m}A_{kh,m}g_{k,m}\|_{L^2(\Om)}\\
&\le \|R_{h}u\|_{L^\infty(I;L^2(\Om))} \sum_{m=1}^Mk_m\sup_{t\in I_m}\|A_h(t)A^{-1}_{kh,m}\|_{L^2\to L^2}\|A_{kh,m}g_{k,m}\|_{L^2(\Om)}\\
&\le Ce^{\mu T}\lk\|R_{h}u\|_{L^\infty(I;L^2(\Om))} \|\tilde{\theta}\|_{L^1(I)}\|u_{kh}(\tilde{t},\cdot)\|_{L^2(\Om)}\\
&\le  Ce^{\mu T}\lk\|R_{h}u\|_{L^\infty(I;L^2(\Om))}\|u_{kh}(\tilde{t})\|_{L^2(\Om)}.
\end{aligned}
$$ 
Exactly as in the estimate of $J_2$ in Theorem \ref{eq: projection pi_k}, we obtain 
$$
J_2\le  Ce^{\mu T}\lk\|u\|_{L^\infty(I;L^2(\Om))}\|u_{kh}(\tilde{t})\|_{L^2(\Om)}.
$$ 
Thus canceling $\|u_{kh}(\tilde{t})\|_{L^2(\Om)}$ and taking supremum over $\tilde{t}$, we establish
\begin{equation}\label{eq: after canceling ukh}
\|u_{kh}\|_{L^\infty(I;L^2(\Om))}\le Ce^{\mu T}\lk\left(\|R_{h}u\|_{L^\infty(I;L^2(\Om))}+\|u\|_{L^\infty(I;L^2(\Om))} \right).
\end{equation}
Using  that  dG($0$)cG($r$) method is invariant on $\Xkh$, by replacing $u$ and $u_{kh}$ with $u-\chi$ and $u_{kh}-\chi$ for any $\chi\in \Xkh$, and using the triangle inequality we obtain the theorem.
\end{proof}

\begin{theorem}\label{thm: last}
Let the coefficients $a_{ij}(t,x)$ satisfy the Assumption \ref{assumption} and let $u$  be the solution to \eqref{eq: heat equation} with $u \in C(\bar I; L^2(\Omega))$ and $u_{kh}$ be the dG(0)cG(r) solution for  $r\geq 1$ on a quasi-uniform triangulation  $\mathcal{T}$ with the coefficients $a_{ij}(t,x)$ satisfying the Assumption \ref{assumption}. Then under the assumption of Theorem  \ref{thm: maximal parabolic fully discrete} there exists a constant $C$ independent of $k$ and $h$ such that for $1\le p<\infty$,
$$
\|u-u_{kh}\|_{L^p(I;L^2(\Om))}\le C\lk \left(\|u-\pi_k u\|_{L^p(I;L^2(\Om))}+\|u-R_{h}u\|_{L^p(I;L^2(\Om))}\right),
$$
where the projection $\pi_k$ is defined in \eqref{eq: projection pi_k} and $R_{h}$ in \eqref{eq: defition Ritz}.
\end{theorem}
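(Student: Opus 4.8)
The plan is to combine the duality argument from the proof of Theorem~\ref{thm: best approxim fully infty} with the $L^p$--$L^q$ bookkeeping in time from the proof of Theorem~\ref{thm: semidiscrete best}. Write $q$ for the conjugate exponent of $p$. As in Theorem~\ref{thm: best approxim fully infty} the heart of the matter is the stability bound
\[
\|u_{kh}\|_{L^p(I;L^2(\Om))}\le Ce^{\mu T}\lk\bigl(\|R_h u\|_{L^p(I;L^2(\Om))}+\|\pi_k u\|_{L^p(I;L^2(\Om))}\bigr).
\]
To obtain it, fix $\varphi\in L^q(I;L^2(\Om))$ of norm one, let $g$ solve the backward dual problem with data $\varphi$ and $g(T)=0$, and let $g_{kh}\in\Xkh$ be its dG($0$)cG($r$) approximation; using $B(u-u_{kh},g_{kh})=0$ and the dual form \eqref{eq:B_dual} of $B$ one gets, exactly as in Theorem~\ref{thm: best approxim fully infty},
\[
(u_{kh},\varphi)_{\IOm}=B(u,g_{kh})=\sum_{m=1}^M\bigl(R_h(t)u,A_h(t)g_{kh,m}\bigr)_{\ImOm}-\sum_{m=1}^M\bigl(u(t_m),[g_{kh}]_m\bigr)_\Om=:J_1+J_2.
\]
For $J_1$ I would use H\"older in time, Cauchy--Schwarz in space, insert $A_{kh,m}^{-1}A_{kh,m}$, and bound $\|A_h(t)A_{kh,m}^{-1}\|_{L^2\to L^2}$ by Lemma~\ref{lem: intermediate fully discrete}, reducing $J_1$ to $\|R_h u\|_{L^p(I;L^2(\Om))}$ times $\bigl(\sum_m k_m\|A_{kh,m}g_{kh,m}\|_{L^2(\Om)}^q\bigr)^{1/q}\le Ce^{\mu T}\lk$ by Corollary~\ref{cor: discrete maximal fully}. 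For $J_2$ I would split $\|u(t_m)\|_{L^2(\Om)}\|[g_{kh}]_m\|_{L^2(\Om)}=(k_m^{1/p}\|u(t_m)\|_{L^2(\Om)})(k_m^{-1/p}\|[g_{kh}]_m\|_{L^2(\Om)})$, apply discrete H\"older together with $k_m^{-q/p}=k_m^{1-q}$, which bounds $J_2$ by $\|\pi_k u\|_{L^p(I;L^2(\Om))}$ times $\bigl(\sum_m k_m\|[g_{kh}]_m/k_m\|_{L^2(\Om)}^q\bigr)^{1/q}\le Ce^{\mu T}\lk$, again by Corollary~\ref{cor: discrete maximal fully}. (For $p=1$, hence $q=\infty$, one uses Theorem~\ref{thm: maximal parabolic fully discrete} instead; the $u_0$-term there is absent because $g(T)=0$, and the required maximal regularity for the backward-in-time problem follows from the forward one since $A(t)$ is symmetric.) Taking the supremum over $\varphi$ yields the stability bound.

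The next step is the $\Xkh$-invariance trick: for any $\chi\in\Xkh$ the pair $u-\chi,u_{kh}-\chi$ still satisfies $B((u-\chi)-(u_{kh}-\chi),\varphi_{kh})=0$, and since $R_h(t)$ reproduces $V_h$ and $\pi_k$ reproduces functions constant on each $I_m$, repeating the computation with $u$ replaced by $u-\chi$ gives
\[
\|u-u_{kh}\|_{L^p(I;L^2(\Om))}\le\|u-\chi\|_{L^p(I;L^2(\Om))}+Ce^{\mu T}\lk\bigl(\|R_h u-\chi\|_{L^p(I;L^2(\Om))}+\|\pi_k u-\chi\|_{L^p(I;L^2(\Om))}\bigr).
\]
The main obstacle is the choice of $\chi$: the natural candidates $\pi_k u$ and $R_h u$ do not lie in $\Xkh$, and $\pi_k R_h u$ is useless since the pointwise-in-time interpolant $\pi_k$ is not stable on $L^p(I;L^2(\Om))$ for $p<\infty$. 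I would instead take $\chi:=P_h\bar\pi_k u$, where $\bar\pi_k$ is the $L^2(I)$-orthogonal (cell-average) projection onto piecewise-constant-in-time functions; both $P_h$ and $\bar\pi_k$ are $L^p(I;L^2(\Om))$-stable and $\bar\pi_k\pi_k u=\pi_k u$. On each $I_m$ one then has $\|\pi_k u-\bar\pi_k u\|_{L^2(\Om)}\le k_m^{-1/p}\|u-\pi_k u\|_{L^p(I_m;L^2(\Om))}$ and, using $\|(\Id-P_h)v\|_{L^2(\Om)}=\min_{v_h\in V_h}\|v-v_h\|_{L^2(\Om)}\le\|v-R_h(t)v\|_{L^2(\Om)}$, also $\|(\Id-P_h)\bar\pi_k u\|_{L^2(\Om)}\le k_m^{-1/p}\|u-R_h u\|_{L^p(I_m;L^2(\Om))}$; combining these with $\|u-\bar\pi_k u\|_{L^p(I;L^2(\Om))}\le 2\|u-\pi_k u\|_{L^p(I;L^2(\Om))}$ shows that each of $\|u-\chi\|$, $\|R_h u-\chi\|$ and $\|\pi_k u-\chi\|$ in the $L^p(I;L^2(\Om))$-norm is bounded by $C\bigl(\|u-\pi_k u\|_{L^p(I;L^2(\Om))}+\|u-R_h u\|_{L^p(I;L^2(\Om))}\bigr)$, which finishes the proof. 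As in the preceding fully discrete results and in Lemma~\ref{lem: intermediate fully discrete}, Assumption~\ref{assumption}, the convexity of $\Om$ and the quasi-uniformity of $\mathcal T$ are used throughout.
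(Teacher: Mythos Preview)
Your proposal is correct and follows precisely the route the paper indicates: the paper's own proof is only a pointer, stating that one should combine the duality argument of Theorem~\ref{thm: best approxim fully infty} with Corollary~\ref{cor: discrete maximal fully} in the spirit of the corresponding $L^p$ result (Theorem~12) in~\cite{LeykekhmanD_VexlerB_2016b}, and this is exactly what you do. Your handling of $J_1$ via Lemma~\ref{lem: intermediate fully discrete} and of $J_2$ via the weight splitting $k_m^{1/p}\cdot k_m^{-1/p}$ is the expected bookkeeping, and your explicit choice $\chi=P_h\bar\pi_k u$ together with the verification that $\|u-\chi\|$, $\|R_hu-\chi\|$ and $\|\pi_ku-\chi\|$ are controlled by $\|u-\pi_ku\|+\|u-R_hu\|$ cleanly resolves the issue that neither $\pi_ku$ nor $R_hu$ lies in $\Xkh$, a point the paper leaves implicit.
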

\begin{proof}
The proof uses the result of Corollary~\eqref{cor: discrete maximal fully} and goes along the lines of the proof of Theorem 12 in~\cite{LeykekhmanD_VexlerB_2016b} and Theorem \ref{thm: best approxim fully infty} above.
\end{proof}

For sufficiently regular solutions, combining the above two theorems and using the approximation theory we immediately obtain an optimal order convergence result. 
\begin{corollary}
Under the assumptions of Theorem \ref{thm: last} and the regularity $u\in W^{1,p}(I;L^2(\Om)\cap L^p(I;H^{r+1}(\Om))$ for some $1\le p\le\infty$, there exists a constant $C$ independent of $k$ and $h$ such that
$$
\|u-u_{kh}\|_{L^p(I;L^2(\Om))}\le C\lk \left(k\|u\|_{W^{1,p}(I;L^2(\Om))}+h^{r+1}\|u\|_{L^{p}(I;H^{r+1}(\Om))}\right).
$$
\end{corollary}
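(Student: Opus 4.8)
The plan is to start from Theorem~\ref{thm: last} (respectively Theorem~\ref{thm: best approxim fully infty} for $p=\infty$), which already reduces the error to the sum of the temporal interpolation error $\|u-\pi_k u\|_{L^p(I;L^2(\Om))}$ and the spatial Ritz projection error $\|u-R_h u\|_{L^p(I;L^2(\Om))}$, and then to bound each of these two quantities by a standard approximation argument.

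For the temporal term I would argue subinterval by subinterval. On $I_m=(t_{m-1},t_m]$ the definition~\eqref{eq: projection pi_k} gives $u(t)-\pi_k u(t)=u(t)-u(t_m)=-\int_t^{t_m}\pa_s u(s)\,ds$, hence $\|u(t)-\pi_k u(t)\|_{L^2(\Om)}\le\int_{I_m}\|\pa_s u(s)\|_{L^2(\Om)}\,ds$. Raising to the $p$-th power, integrating over $t\in I_m$ and applying H\"older's inequality in time yields $\|u-\pi_k u\|_{L^p(I_m;L^2(\Om))}^p\le k_m^p\int_{I_m}\|\pa_s u(s)\|_{L^2(\Om)}^p\,ds$; summing over $m$, using $k_m\le k$ and taking the $p$-th root gives $\|u-\pi_k u\|_{L^p(I;L^2(\Om))}\le k\,\|u\|_{W^{1,p}(I;L^2(\Om))}$. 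The case $p=\infty$ is the obvious pointwise-in-time version of the same estimate.

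For the spatial term I would invoke the classical Aubin--Nitsche duality estimate for the Ritz projection at each fixed time $t$: since $\Om$ is convex and, by Assumption~\ref{assumption}, the coefficients $a_{ij}(t,\cdot)$ belong to $W^{1,\infty}(\Om)$ with the uniform bound $L$, the elliptic problem associated with $a(t;\cdot,\cdot)$ is $H^2$-regular with a constant depending only on $\Om$ and $L$; combined with the quasi-uniformity of $\mathcal{T}$ (Assumption~\ref{assume: quasi}) and the standard interpolation estimates in $V_h$, this gives $\|u(t)-R_h(t)u(t)\|_{L^2(\Om)}\le C h^{r+1}\|u(t)\|_{H^{r+1}(\Om)}$ with $C$ independent of $t$, $k$ and $h$. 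Taking the $L^p(I)$ norm in $t$ then produces $\|u-R_h u\|_{L^p(I;L^2(\Om))}\le C h^{r+1}\|u\|_{L^p(I;H^{r+1}(\Om))}$, and substituting the two bounds into the estimate of Theorem~\ref{thm: last} yields the claim.

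The only point requiring a little care --- and hence the ``main obstacle'', although it is a mild one --- is verifying that the constant in the Ritz estimate is genuinely uniform in $t$; this is precisely where the uniform-in-time $W^{1,\infty}$ control of the coefficients in Assumption~\ref{assumption} and the convexity of $\Om$ enter, guaranteeing a $t$-independent $H^2$-regularity constant and therefore a $t$-independent finite element error constant, so that the time integration in the last step is legitimate.
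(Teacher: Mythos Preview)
Your proposal is correct and matches the paper's own treatment, which simply states that the corollary follows by ``combining the above two theorems and using the approximation theory'' without writing out any details; you have supplied precisely those details in the standard way. The only minor point to note for $p=\infty$ is that Theorem~\ref{thm: best approxim fully infty} involves $\min_{\chi\in\Xkh}\|u-\chi\|$ rather than $\|u-\pi_k u\|$, so one should choose $\chi\in\Xkh$ (e.g.\ $\pi_k$ of a spatial interpolant), but this is routine and yields the same bound.
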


\begin{remark}
The results of Theorems \ref{thm: best approxim semidiscrete infty}, \ref{thm: semidiscrete best}, \ref{thm: best approxim fully infty}, and \ref{thm: last} also hold for the elliptic operator of the form $A(t,x)=b(t)A(x)$, where $b(t)\in C^{\frac{1}{2}+\eps}(\bar{I})$ and $A(x)$ is the second order elliptic operator with bounded coefficients. In view of the uniform ellipticity condition \eqref{eq: uniform ellipticity}, we have $b(t)\geq b_0>0$ for some $b_0\in \R^+$, and as a result  Lemmas \ref{lem: intermediate semidiscrete} and \ref{lem: intermediate fully discrete} trivially hold without any additional assumptions, such as Assumptions \ref{assumption} and \ref{assume: quasi}. Thus, the results of the theorems hold for non-convex polygonal/polyhedral domains and on graded meshes. 
\end{remark}


\section*{Acknowledgments}
The authors would like to thank Dominik Meidner and Lucas Bonifacius for the careful reading of the manuscript and providing valuable suggestions that help to improve the presentation of the paper.


\bibliography{biblio}
\bibliographystyle{siam}

\end{document}